%\def\com#1{{\bf[#1]}}
%\usepackage{palatino}
%\newtheorem{theo}{Theorem}
%\noindent{\bf Step \arabic{steps}.\unskip\quad}}
%def particulieres

\documentclass[12pt, a4paper]{article}
%%%%%%%%%%%%%%%%%%%%%%%%%%%%%%%%%%%%%%%%%%%%%%%%%%%%%%%%%%%%%%%%%%%%%%%%%%%%%%%%%%%%%%%%%%%%%%%%%%%%%%%%%%%%%%%%%%%%%%%%%%%%%%%%%%%%%%%%%%%%%%%%%%%%%%%%%%%%%%%%%%%%%%%%%%%%%%%%%%%%%%%%%%%%%%%%%%%%%%%%%%%%%%%%%%%%%%%%%%%%%%%%%%%%%%%%%%%%%%%%%%%%%%%%%%%%
\usepackage{amsfonts}
\usepackage{amsmath}
\usepackage[latin1]{inputenc}
\usepackage{amsthm}
\usepackage{amssymb}
\usepackage[francais,english]{babel}
\usepackage{graphics,amssymb,epsfig,stmaryrd,color}

\setcounter{MaxMatrixCols}{10}
%TCIDATA{OutputFilter=LATEX.DLL}
%TCIDATA{Version=5.00.0.2557}
%TCIDATA{<META NAME="SaveForMode" CONTENT="1">}
%TCIDATA{LastRevised=Sunday, October 19, 2008 17:27:15}
%TCIDATA{<META NAME="GraphicsSave" CONTENT="32">}

\multlinegap=0pt
\numberwithin{equation}{section}
\theoremstyle{plain}
\newtheorem{thm}{Theorem}[section]

\newtheorem{lem}[thm]{Lemma}

\theoremstyle{definition}

\theoremstyle{remark}

\newcommand{\ve}{\varepsilon}

\newcommand{\deb}{\rightharpoonup}

\newcommand{\pical}{\mathcal{P}}

\newcommand{\R}{\mathbb{R}}

\let \eps\varepsilon

\def\<#1,#2>{\left<#1,#2\right>}
\let\bar\overline

\begin{document}

\title{From Knothe's transport to Brenier's map and a continuation method
for optimal transport}
\author{G. Carlier\thanks{{\scriptsize CEREMADE, Universit\'e Paris IX
Dauphine \texttt{carlier@ceremade.dauphine.fr}}}, A. Galichon\thanks{%
{\scriptsize D\'epartement d'Economie, Ecole Polytechnique, \texttt{%
alfred.galichon@polytechnique.edu}}}, F. Santambrogio\thanks{{\scriptsize %
CEREMADE, Universit\'e Paris IX Dauphine, \texttt{%
filippo@ceremade.dauphine.fr}}}}
\maketitle

\begin{abstract}
A simple procedure to map two probability measures in $\mathbb{R}^d$ is the
so-called \emph{Knothe-Rosenblatt rearrangement}, which consists in
rearranging monotonically the marginal distributions of the last
coordinate, and then the conditional distributions, iteratively. We show
that this mapping is the limit of solutions to a class of Monge-Kantorovich
mass transportation problems with quadratic costs, with the weights of the
coordinates asymptotically dominating one another. This enables us to design
a continuation method for numerically solving the optimal transport problem.
\end{abstract}

\textbf{Keywords:} optimal transport, rearrangement of vector-valued maps,
Knothe-Rosenblatt transport, continuation methods.

%\newpage

\section{Introduction}

\label{intro}

Given two Borel probability measures $\mu$ and $\nu$ on $\mathbb{R}^d$, a
Borel map $S$ : $\mathbb{R}^d\to \mathbb{R}^d$ is said to be a transport map
between $\mu$ and $\nu$ if $S\sharp \mu=\nu$ where $S\sharp \mu$ denotes the
push-forward (or image measure) of $\mu$ through $\nu$ (i.e. $S\sharp \mu
(B)=\mu(S^{-1}(B))$ for every Borel $B$). In the present article, we will
focus on two particular transport maps: the Knothe-Rosenblatt transport and
the Brenier's map.

\smallskip

\textbf{The Knothe-Rosenblatt transport.} The Knothe-Rosenblatt
rearrangement was independently proposed by Rosenblatt \cite{Rosenblatt52}
for statistical purposes and by Knothe \cite{Knothe57} in order to extend
the Brunn-Minkowski inequalities. The principle is the following, as
explained in Villani \cite{Villani06}. Let $\mu $ and $\nu $ be two Borel
probability measures on ${\mathbb{R}}^{d}$ and assume for simplicity for the moment 
that $\mu$ is absolutely continuous with respect to the Lebesgue measure. Let us denote by
$\mu^d$ (respectively $\nu^d$) the $d$-th marginal of $\mu$ (respectively $%
\nu$) and $\mu^{d-1}_{x_d}$, $\mu^{d-2}_{(x_d, x_{d-1})}$,\ldots, $%
\mu^1_{(x_d,...,x_2)}$ (respectively $\nu^{d-1}_{y_d}$, $\nu^{d-2}_{(y_d,
y_{d-1})}$,\ldots, $\nu^1_{(y_d,...,y_2)}$) the successive disintegrations
(or conditional measures) of $\mu$ (respectively $\nu$) given $x_d$, $(x_d,
x_{d-1})$,..., $(x_d,..., x_2)$ (respectively given $y_d$, $(y_d, y_{d-1})$,
..., $(y_d,..., y_2)$). Now let $T_d=T_d(x_d)$ be the monotone nondecreasing
map transporting $\mu^d$ to $\nu^d$, such a map is well-defined and unique
as soon as $\mu_d$ has no atoms and in this case, it is explicitly given by $%
T_{d}=G_{d}^{-1}\circ F_{d}$ (with $F_d(\alpha):=\mu((-\infty, \alpha])$ and
$G_d(\alpha):=\nu((-\infty, \alpha])$). Then let $T_{d-1}=T_{d-1}(x_{d-1},
x_d)$ be such that $T_d(., x_{d})$ is monotone and maps $\mu^{d-1}_{x_d}$ to
$\nu^{d-1}_{T_d(x_d)}$. One repeats the construction (well-known by
statisticians under the name of conditional quantile transforms) iteratively
until we define $T_{1}(x_{1},x_{2},...,x_{d})$, which is monotone in $x_1$
and transports $\mu ^{1}_{(x_{d},..., x_2)}$ onto $\nu^1_{T_2(x_2,...,x_d)}$%
. Finally, the \emph{Knothe-Rosenblatt rearrangement} $T$ is defined by $%
T(x)=\left(
T_{1}(x_{1},x_{2},...,x_{d}),...,T_{d-1}(x_{d-1},x_{d}),T_{d}(x_{d})\right) $%
. Obviously, $T$ is a transport map from $\mu $ to $\nu $, i.e. $T\sharp \mu
=\nu $. By construction, the Knothe transport $T$ has a triangular Jacobian
matrix with nonnegative entries on its diagonal. Note
also that the computation of the Knothe transport only involves
one-dimensional monotone rearrangements and that it is well defined as soon the measures one transports have no atoms. The precise assumption is the following.

{\bf Assumption {\it (H-source)}:} the measure $\mu^d$, as well as $\mu^d-$almost all the measures $\mu^{d-1}_{x_d}$, and the measures $\mu^{d-2}_{x_d,x_{d-1}}$ for $\mu^d-$a.e. $x_d$ and $\mu^{d-1}_{x_d}-$a.e. $x_{d-1}$\dots up to almost all the measures $\mu^1_{x_d,\dots,x_2}$, which are all measures on the real line, must have no atoms. 

Notice that {\it (H-source)} is satisfied as soon as $\mu$ is  absolutely continuous with respect to the Lebesgue measure. 

\textbf{The Monge-Kantorovich problem and the Brenier's map.} Optimal
transportation theory provides an alternative way to transport $\mu$ to $\nu$%
. We recall that in the case of the quadratic cost, the Monge-Kantorovich
problem reads as
\begin{equation}  \label{mk}
\inf_{\pi \in {\Gamma}(\mu,\nu) } \int_{\mathbb{R}^d\times \mathbb{R}^d}
 \vert x -y\vert^2 d\pi(x, y)
\end{equation}
where $\Gamma(\mu,\nu)$ denotes the set of transport plans between $\mu$ and
$\nu$ i.e. the set of probability measure on ${\mathbb{R}}^d \times {\mathbb{%
R}}^d $ with marginals $\mu$ and $\nu$. We refer to the books of Villani
\cite{Villani03}, \cite{Villani06} for a modern account of optimal
transportation theory. The linear problem (\ref{mk}) is a relaxation of the
Monge problem
\begin{equation}  \label{monge}
\inf_{S \; : \; S\sharp \mu=\nu } \int_{\mathbb{R}^d} \vert x-S(x)\vert^2
d\mu(x)
\end{equation}
When $\mu$ is absolutely continuous with respect to the Lebesgue measure,
Brenier \cite{Brenier91} proved that (\ref{monge}) has a unique solution
which is characterized by the fact that it is the gradient of some convex
function. More precisely, there exists a unique (up to constants and $\mu$%
-a.e. equivalence) convex function $V: \; {\mathbb{R}}^d \to {\mathbb{R}}$
such that $\nabla V \sharp \mu = \nu$. Also $(id\times \nabla V)\sharp \mu$
is the only solution of (\ref{mk}) and $\nabla V$ is called the Brenier's
map between $\mu$ and $\nu$.

\textbf{The Knothe-Rosenblatt as a limit of optimal transportation plans. }
Let us slightly modify the quadratic cost in (\ref{mk}) and replace it with
the weighted quadratic cost
\begin{equation*}
c_\eps(x,y):=\sum_{i=1}^d \lambda_i(\eps) (x_i - y_i)^2
\end{equation*}
where the $\lambda_i(\eps)$'s are positive scalars depending on a parameter $%
\eps>0$. If $\mu$ is absolutely continuous with respect to the Lebesgue
measure, the corresponding optimal transportation problem admits a unique
solution $T_\eps$. When in addition, for all $k \in \{1,..., d-1\}$, $%
\lambda_{k}(\eps) / \lambda_{k+1}(\eps) \to 0$ as $\eps\to 0$, it is natural
to expect the convergence of $T_\eps$ to the Knothe transport $T$. 
We will
show that this convergence holds provided $\nu$ satisfies some additional
condition, and namely

{\bf Assumption {\it (H-target)}:} the measure $\nu^d$, as well as $\nu^d-$almost all the measures $\nu^{d-1}_{y_d}$, and the measures $\nu^{d-2}_{y_d,y_{d-1}}$ for $\nu^d-$a.e. $y_d$ and $\nu^{d-1}_{y_d}-$a.e. $y_{d-1}$\dots up to almost all the measures $\nu^2_{y_d,\dots,y_3}$, which are all measures on the real line, must have no atoms. 

Notice that {\it (H-target)} is not natural as {\it (H-source)} is. Yet, we will show a counter-example to the convergence result when it is not satisfied. {\it (H-target)} as well is satisfied should $\nu$ be absolutely continuous (actually, this assumption is slightly weaker then {\it (H-source)}, since the last disintegration measures are not concerned).

This convergence result was conjectured by Y. Brenier as a very natural one, and actually its proof is not hard. Yet, it was not known before that extra assumptions on $\nu$ were needed. This makes one of the point of interest of this paper.

The other point is what we investigate later in the paper, i.e. the other direction: from Knothe to
Brenier. We will study the dependence $\eps\mapsto T_\eps$ by means of the
evolution with respect to $\eps$ of the dual variables. This will enable us,
to design a numericaly strategy to approximate all the optimal transports $%
T_\eps$ taking as initial condition the (cheap to compute) Knothe transport $%
T$.

\textbf{An example. } To illustrate the problem in a particular case where
explicit solutions are available, take $d=2$, and $\mu $ and $\nu $ two
Gaussian measures where $\mu =N\left( 0,I_{2}\right) $ and $\nu =N\left( 0,%
\begin{pmatrix}
a & b \\
b & c%
\end{pmatrix}%
\right) $. Take $\lambda_{1}\left( \varepsilon \right) =\varepsilon $ and $%
\lambda _{2}\left( \varepsilon \right) =1$. Then it can be verified that $%
T_{\varepsilon }$ is linear, and that its matrix in the canonical basis of ${%
\mathbb{R}}^{2}$ is
\begin{equation*}
T_{\varepsilon }=\frac{1}{\sqrt{a\eps ^{2}+c+2\eps \sqrt{ac-b^{2}}}}%
\begin{pmatrix}
a\eps +\sqrt{ac-b^{2}} & b \\
b\eps & c+\eps \sqrt{ac-b^{2}}%
\end{pmatrix}%
\end{equation*}%
which converges as $\varepsilon \rightarrow 0$ to $T=%
\begin{pmatrix}
\sqrt{a-b^{2}/c} & b/\sqrt{c} \\
0 & \sqrt{c}%
\end{pmatrix}%
$, which is precisely the matrix of the Knothe transport from $\mu $ to $\nu
$.

\smallskip

\textbf{Organization of the paper. } In section \ref{limit}, we show, under
suitable assumptions, that the optimal transportation maps for the cost $c_{%
\eps}$ converge to Knothe's transport map as the parameter $\eps$ goes to $0$%
, we will also emphasize that some conditions are to be imposed on $\nu $
for the convergence to hold. In section \ref{odeforp}, we show that the
evolution of the dual variables in the optimal transportation problem for
cost the $c_{\eps}$ is given by a well-posed ordinary differential equation.
Finally in section \ref{numres}, we discretize this equation and give
several numerical results.

\section{Knothe transport as a limit of quadratic optimal transports}

\label{limit}

We directly state our first result, whose proof, in the spirit of $\Gamma-$convergence developments (see \cite{introgammaconve}), will require several steps.

\begin{thm}
\label{t1}Let $\mu$ and $\nu$ be two probability
measures on ${\mathbb{R}}^d$ satisfying {\it (H-source)} and {\it (H-target)}, respectively, with finite second moments, and $\gamma_{\eps}$ be an
optimal transport plan for the costs $%
c_\eps(x,y)=\sum_{i=1}^d \lambda_i(\eps) (x_i - y_i)^2 $, for some weights $%
\lambda_{k}(\eps)>0$. Suppose that for all $k \in \{1,..., d-1\}$, $%
\lambda_{k}(\eps) / \lambda_{k+1}(\eps) \to 0$ as $\eps\to 0$. Let $T$ be the
Knothe-Rosenblatt map between $\mu$ and $\nu$ and $\gamma_K\in\pical(\R^d\times\R^d)$ the associated transport plan (i.e. $\gamma_K:=(id \times T)\sharp \mu$). Then $\gamma_\eps\deb\gamma_K$ as $\ve\to 0$. 

Moreover, should the plans $\gamma_\ve$ be induced by transport maps $T_\ve$, then these maps would
converge to $T$ in $L^2(\mu)$ as $\eps \to 0$.
\end{thm}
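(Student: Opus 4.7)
The plan is to proceed in the spirit of $\Gamma$-convergence by recovering Knothe's structure coordinate by coordinate, starting from the dominant one, and then reducing to a lower-dimensional problem by disintegration. Since $\mu$ and $\nu$ have finite second moments and the marginals of $\gamma_\ve$ are fixed, the family $(\gamma_\ve)$ has uniformly bounded second moments on $\R^d\times\R^d$, hence is tight. Extract a narrowly converging subsequence to a limit $\gamma_*\in\Gamma(\mu,\nu)$; it suffices to show that every such $\gamma_*$ coincides with $\gamma_K$.

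For the top coordinate, compare $\gamma_\ve$ with $\gamma_K$: the inequality $\int c_\ve\,d\gamma_\ve\le\int c_\ve\,d\gamma_K$, divided by $\lambda_d(\ve)$ and combined with $\lambda_i/\lambda_d\to 0$ for $i<d$ and the uniform bound on $\int(x_i-y_i)^2\,d\gamma$, yields $\limsup_\ve\int(x_d-y_d)^2\,d\gamma_\ve\le\int(x_d-y_d)^2\,d\gamma_K$, which is the one-dimensional optimal transport cost between $\mu^d$ and $\nu^d$. Lower semicontinuity of $\gamma\mapsto\int(x_d-y_d)^2\,d\gamma$ under narrow convergence then forces the $(x_d,y_d)$-projection $\sigma_*$ of $\gamma_*$ to be optimal for this 1D problem, and by (H-source) at level $d$ ($\mu^d$ atomless) it must be the unique monotone coupling, so that $y_d=T_d(x_d)$ $\gamma_*$-almost surely.

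Next, I would argue by induction on $d$ and reduce to a lower-dimensional problem through disintegration. Writing $\gamma_\ve=\int\gamma_\ve^{x_d,y_d}\,d\sigma_\ve(x_d,y_d)$, a standard gluing argument based on the optimality of $\gamma_\ve$ for the additively separable cost shows that $\gamma_\ve^{x_d,y_d}$ is itself an optimal transport plan between its marginals for the reduced cost $\sum_{i<d}\lambda_i(\ve)(x_i-y_i)^2$, $\sigma_\ve$-a.e. Here (H-target) at level $d$ plays its role: atomlessness of $\nu^d$ ensures that pushing the canonical disintegration $\nu=\int(\delta_{y_d}\otimes\nu^{d-1}_{y_d})\,d\nu^d(y_d)$ back through $T_d$ uniquely pins down the $y$-marginal of the limiting conditional as $\nu^{d-1}_{T_d(x_d)}$ for $\mu^d$-a.e.\ $x_d$. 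Without (H-target), on the preimage of an atom of $\nu^d$ the mass could be split among distinct $y$-conditionals of $\nu$, destroying the Knothe structure; this is what underlies the paper's counterexample. The conditional subproblems are then $(d-1)$-dimensional OT problems between the conditionals of $\mu$ and $\nu$ on $\R^{d-1}\times\R^{d-1}$, whose weights $\lambda_1,\dots,\lambda_{d-1}$ still satisfy the ratio condition and whose marginals inherit (H-source) and (H-target) at the corresponding levels. The induction hypothesis then forces each conditional plan to converge to the Knothe plan of the subproblem; reassembling via a measurable selection argument gives $\gamma_*=\gamma_K$, and uniqueness of the narrow limit upgrades this to $\gamma_\ve\deb\gamma_K$ for the entire family.

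The main obstacle lies in this third step: rigorously justifying the conditional optimality property, the convergence of the conditional $y$-marginals (where (H-target) is essential), and the measurable reassembly of the induction hypothesis. A naive argument using only (H-source) would fail, as the paper's counterexample shows. Finally, when $\gamma_\ve=(\mathrm{id}\times T_\ve)\sharp\mu$, the $L^2(\mu)$ convergence $T_\ve\to T$ follows by expanding $\int|T_\ve-T|^2\,d\mu$: the identity $\int|T_\ve|^2\,d\mu=\int|y|^2\,d\nu=\int|T|^2\,d\mu$ reduces matters to passing to the limit in the bilinear term $\int y\cdot T(x)\,d\gamma_\ve(x,y)\to\int|T(x)|^2\,d\mu(x)$, which holds by narrow convergence $\gamma_\ve\deb\gamma_K$ combined with an approximation of $T\in L^2(\mu)$ by bounded continuous functions, justified by the uniform second-moment bound on the $\gamma_\ve$.
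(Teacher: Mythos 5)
Your first step (tightness, extraction, and identification of the $(x_d,y_d)$-projection of the limit as the monotone coupling of $\mu^d,\nu^d$) and your last step (the $L^2(\mu)$ convergence of the maps via expansion of $\int|T_\ve-T|^2d\mu$ and approximation of $T$ by bounded continuous maps) are correct and essentially match the paper. The gap is in the middle, and it is not a technicality you can defer: your plan disintegrates the $\ve$-level plans $\gamma_\ve$ with respect to $(x_d,y_d)$ and applies an induction hypothesis to the ``conditional subproblems between the conditionals of $\mu$ and $\nu$.'' But for $\ve>0$ the marginals of $\gamma_\ve^{x_d,y_d}$ are \emph{not} $\mu^{d-1}_{x_d}$ and $\nu^{d-1}_{T_d(x_d)}$: since $\sigma_\ve$ is not concentrated on a graph, the conditional law of $x_{<d}$ (resp. $y_{<d}$) given the pair $(x_d,y_d)$ is an $\ve$-dependent measure that in general differs from the conditional of $\mu$ given $x_d$ (resp. of $\nu$ given $y_d$), so the induction hypothesis, which is a statement about a fixed pair of measures satisfying (H-source)/(H-target), does not apply. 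Moreover, disintegration does not commute with narrow convergence, so even granting the (true, and provable by a gluing argument) fact that each $\gamma_\ve^{x_d,y_d}$ is optimal for the reduced cost between its own marginals, you have no convergence statement for these conditionals to feed into an induction; and your appeal to (H-target) (``pushing the disintegration of $\nu$ back through $T_d$ uniquely pins down the $y$-marginal of the limiting conditional'') is precisely the assertion that has to be proved, not a proof of it.

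The paper closes exactly this gap by working only with the limit plan: it iterates the optimality inequality $\int c_\ve d\gamma_\ve\le\int c_\ve d\gamma_K$, subtracting at each stage the already-identified higher-coordinate costs (using that the projection of $\gamma_K$ onto those coordinates is optimal, so $(\pi_d)_\sharp\gamma_\ve$ cannot beat it) and dividing by $\lambda_{h-1}(\ve)$, to get $\int c^{(h-1)}d\gamma\le\int c^{(h-1)}d\gamma_K$ for the limit $\gamma$; it then disintegrates $\gamma$ and $\gamma_K$ with respect to their common projection $\gamma^h$ and shows the disintegrations have equal marginals by testing against $\psi(x_h,\dots,y_d)\phi(x_{h-1})$ and $\psi(\cdot)\phi(y_{h-1})$, reducing the number of variables through the graph structure: $y$-coordinates are functions of $x$-coordinates on the support, and, crucially, the reverse substitution needs the monotone maps to be invertible, which is exactly where (H-target) enters. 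Equality of marginals plus uniqueness of the one-dimensional monotone coupling (from (H-source) at the corresponding level) then yields equality of the disintegrations a.e. If you want to salvage your structure, you must either reproduce this scale-by-scale expansion and marginal-identification argument, or strengthen your induction hypothesis to allow $\ve$-dependent marginals with the appropriate convergence and a measurable-selection framework; as written, the central step of the proof is missing.
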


\begin{proof}

Take the plans $\gamma_\ve$ that are optimal for the Brenier-like cost $c_\ve$ given by
$$c_\ve(x,y)= \sum_{i=1}^d \lambda_i(\ve) (x_i - y_i)^2$$
(we suppose for simplicity $\lambda_d(\ve)=1$ and $\lambda_i(\ve)/\lambda_{i+1}(\ve)\to 0$).
Suppose (which is possible, up to subsequences) $\gamma_\ve\deb\gamma$. We want to prove $\gamma=\gamma_K$.

\smallskip

By comparing $\gamma_\ve$ to $\gamma_K$ and using optimality we first get
\begin{equation}\label{optimality}
\int c_\ve\,d\gamma_\ve\leq \int c_\ve\,d\gamma_K
\end{equation}
and, passing to the limit as $\ve\to 0$, since $c_\ve$ converges locally uniformly to $c^{(d)}(x,y)=(x_d-y_d)^2$, we get
$$\int c^{(d)} d\gamma\leq \int c^{(d)}d\gamma_K.$$
Yet, the function $c^{(d)}$ only depends on the variables $x_d$ and $y_d$ and this shows that the measure $(\pi_d)_\sharp\gamma$ gets a better result than $(\pi_d)_\sharp\gamma_K$ with respect to the quadratic cost ($\pi_d$ being the projection onto the last coordinates, i.e. $\pi_d(x,y)=(x_d,y_d)$). Yet, the measure $\gamma_K$ has been chosen on purpose to get optimality from $\mu_d$ to $\nu_d$ with respect to this cost, and the two measures share the same marginals. Moreover, thanks to the assumptions on $\mu_d$, this optimal transport plan (which is actually induced by a transport map) is unique. This implies $(\pi_d)_\sharp\gamma=(\pi_d)_\sharp\gamma_K$. Let us call $\gamma^d$ this common measure.

\smallskip

We go back to \eqref{optimality} and go on by noticing that all the measures $\gamma_\ve$ have the same marginals as $\gamma_K$ and hence their (separate) projection onto $x_d$ and $y_d$ are $\mu_d$ and $\nu_d$, respectively. This implies that $(\pi_d)_\sharp\gamma_\ve$ must realize a result which is worse than $(\pi_d)_\sharp\gamma_K$ as far as the quadratic cost is concerned and consequently we have
\[\begin{split}
&\int |x_d-y_d|^2d(\pi_d)_\sharp\gamma_K(x_d,y_d)+ \sum_{i=1}^{d-1} \lambda_i(\ve) \int (x_i - y_i)^2d\gamma_\ve\\
& \leq \int c_\ve\,d\gamma_\ve \leq \int c_\ve\,d\gamma_K\\
&=\int |x_d-y_d|^2d(\pi_d)_\sharp\gamma_K(x_d,y_d)+ \sum_{i=1}^{d-1} \lambda_i(\ve) \int (x_i - y_i)^2d\gamma_K,
\end{split}\]
which implies, by simplifying the common term in $d(\pi_d)_\sharp\gamma_K$, dividing by $\lambda_{d-1}(\ve)$ and passing to the limit,
$$\int c^{(d-1)}d\gamma\leq \int c^{(d-1)}d\gamma_K$$
(we use the general notation $c^k(x,y)=|x_k-y_k|^2$).
We can notice that both integrals depend on the variables $x_{d-1}$ and $y_{d-1}$ only. Anyway, we can project onto the variables $(x_{d-1},x_d)$ and $(y_{d-1},y_d)$ (obtaining measures $(\pi_{d-1})_\sharp\gamma$ and $(\pi_{d-1})_\sharp\gamma_K$) so that we disintegrate with respect to
the measure $\gamma^d$. We have
\begin{eqnarray}
\int d\gamma^d(x_d,y_d) \int |x_{d-1}-y_{d-1}|^2 d\gamma^{d-1}_{(x_d,y_d)}(x_{d-1},y_{d-1})\nonumber\\
\leq
\int d\gamma^d(x_d,y_d) \int |x_{d-1}-y_{d-1}|^2 d\gamma^{d-1}_{(x_d,y_d),K}(x_{d-1},y_{d-1}). \label{disintegree}
\end{eqnarray}
It is is sufficient to prove that the measures $\gamma^{d-1}_{(x_d,y_d)}$ share the same marginals on $x_{d-1}$ and $y_{d-1}$ as the corresponding $\gamma^{d-1}_{(x_d,y_d),K}$ to get that their quadratic performance should be worse than the corresponding performance of $\gamma^{d-1}_{(x_d,y_d),K}$ (this is because the Knothe measure has been chosen exactly with the intention of being quadratically optimal on $(x_{d-1},y_{d-1})$ once $x_d$ and $y_d$  are fixed). Yet, \eqref{disintegree} shows that, on average, the result given by the those measures is not worse than the results of the optimal ones. Thus, the two results coincide for almost any pair $(x_d,y_d)$ and, by uniqueness of the optimal transports (this relies on the assumptions on the measures $\mu^{d-1}_{x_d}$), we get  $\gamma^{d-1}_{(x_d,y_d)}= \gamma^{d-1}_{(x_d,y_d),K}$. To let this proof work it is sufficient to prove that the projections of the two measures coincide for $\gamma^d-$a.e. pair $(x_d,y_d)$. For fixed $(x_d,y_d)$ we would like to prove, for any $\phi$
$$\int \phi(x_{d-1})d\gamma^{d-1}_{(x_d,y_d)}=\int \phi(x_{d-1})d\gamma^{d-1}_{(x_d,y_d),K}$$
(and to prove an analogous equality for functions of $y_{d-1}$). Since we accept to prove it for a.e. pair $(x_d,y_d)$, it is sufficient to prove this equality:
\[\begin{split}
&\int d\gamma^d(x_d,y_d)\psi(x_d,y_d)\int \phi(x_{d-1})d\gamma^{d-1}_{(x_d,y_d)}\\
&=\int d\gamma^d(x_d,y_d)\psi(x_d,y_d)\int \phi(x_{d-1})d\gamma^{d-1}_{(x_d,y_d),K}
\end{split}\]
for any $\phi$ and any $\psi$. This means proving
$$\int \psi(x_d,y_d)\phi(x_{d-1})d\gamma^{d-1}=\int \psi(x_d,y_d)\phi(x_{d-1})d\gamma^{d-1}_K,$$
which is not trivial since we only know that the two measures $\gamma^{d-1}$ and $\gamma^{d-1}_K$ have the same marginals with respect to the pairs $(x_{d-1},x_d)$, $(y_{d-1},y_d)$ (since they have the same projections onto $x$ and onto $y$) and $(x_d,y_d)$ (since we just proved it). But here there is a function of the three variables $(x_{d-1},x_d, y_d)$. Yet, we know that the measure $\gamma^d$ is concentrated on the set $y_d=T_d(x_d)$ for a certain map $T_d$, and this allows to replace the expression of $y_d$, thus getting rid of one variable. This proves that the function $\psi(x_d,y_d)\phi(x_{d-1})$ is actually a function of $(x_{d-1},x_d)$ only, and that equality holds when passing from $\gamma$ to $\gamma_K$.The same can be performed on functions $\psi(x_d,y_d)\phi(y_{d-1})$ but we have in this case to ensure that we can replace $x_d$ with a function of $y_d$, i.e. that we can invert $T_d$. This is possible thanks to the assumption on $\nu_d$, since $T_d$ is the optimal transport from $\mu_d$ to $\nu_d$, but an optimal transport exists in the other direction as well and it gives the same optimal plan (thanks to uniqueness). These facts prove that the measures $\gamma^{d-1}_{(x_d,y_d)}$ and $\gamma^{d-1}_{(x_d,y_d),K}$ have the same marginals and hence, since they are both optimal, they coincide for a.e. pair $(x_d,y_d)$. This implies $\gamma^{d-1}=\gamma^{d-1}_K$.

\smallskip

Now, it is possible to go on by steps: once we have proven that $\gamma^{h}=\gamma^{h}_K$, let us take  \eqref{optimality} and estimate all the terms with $|x_i-y_i|^2$ and $i\geq h$ thanks to the optimality of $\gamma_K$, thus getting
\[\begin{split}
&\sum_{i\geq h}\lambda_i(\ve)\int |x_i-y_i|^2d\gamma_K+ \sum_{i=1}^{h-1} \lambda_i(\ve) \int (x_i - y_i)^2d\gamma_\ve\\
&\leq \int c_\ve\,d\gamma_\ve\leq \int c_\ve\,d\gamma_K\\
&=\sum_{i\geq h}\lambda_i(\ve)\int |x_i-y_i|^2d\gamma_K+ \sum_{i=1}^{h-1}\lambda_i(\ve) \int (x_i - y_i)^2d\gamma_K,
\end{split}\]
and consequently, by dividing by $\lambda_{h-1}(\ve)$ and passing to the limit,
$$\int c^{(h-1)}d\gamma\leq \int c^{(h-1)}d\gamma_K.$$
We disintegrate with respect to $\gamma^h$ and we act exacly as before: proving that the marginals of the disintegrations coincide is sufficient to prove equality of the measures. Here we will use test-functions fo the form
\[\psi(x_h,x_{h+1},\dots,x_d,y_h,y_{h+1},\dots,y_d)\phi(x_{h-1})\]
and
\[\psi(x_h,x_{h+1},\dots,x_d,y_h,y_{h+1},\dots,y_d)\phi(y_{h-1}).\]
 The same trick as before, i.e. replacing the variables $y$ with functions of the variables $x$ is again possible. To invert the trick and replace $x$ with $y$ one needs to invert part of Knothe's transport. This is possible since our assumptions imply that all the monotone transports we get are invertible. In the end we get, as before, $\gamma^{h-1}=\gamma^{h-1}_K$. This procedure may go on up to $h=2$, thus arriving at $\gamma=\gamma_K$.

\smallskip

We have now proven $\gamma_\ve\deb\gamma_K$. Yet, if all these transport plans come from transport maps,  it is well known that $(T_\ve\times id)_\sharp\mu\deb(T\times id)_\sharp\mu$ implies $T_\ve\to T$ in $L^p(\mu)$, for any $p>1$, as far as $T_\ve$ is bounded in $L^p(\mu)$. Actually, weak convergence is a simple consequence of boundedness: to go on, we can look at Young's measures. The assumption (the limit is a transport map as well) exactly means that all the Young measures are dirac masses, which implies strong convergence. In particular we get $L^2(\mu)$ convergence and $\mu$-a.e. convergence on a subsequence.
\end{proof}

Let us remark here that if instead of considering the quadratic cost $c_\eps
$, one considers the more general separable cost
\begin{equation*}
c_\eps(x,y):=\sum_{i=1}^d \lambda_i(\eps) c_i(x_i-y_i)
\end{equation*}
where each $c_i$ is a smooth strictly convex function (with suitable
growth), then the previous convergence proof carries over.

{\textbf{A counterexample when the measures have atoms}} We now show that
interestingly, and perhaps counterintuitively, the hypothesis of absence of atoms
 in theorem \ref{t1} is necessary not only for $\mu$, but also for
$\nu$. We propose a very simple example in ${\mathbb{R}}^2$ where $\mu$ is
absolutely continuous with respect to the Lebesgue measure but $\nu$ does not satisfy {\it (H-target)}, and we show that the conclusion of theorem \ref{t1} fails to hold. On
the square $\Omega := [-1,1 ] \times [-1,1 ] $, define $\mu$ such that $%
\mu(dx) = 1_{ \{ x_1x_2 < 0 \} } dx / 2$ so that the measure $\mu$ is
uniformly spread on the upper left and the lower right quadrants, and $\nu
=\mathcal{H}^1_{|S}/2$, being $S$ the segment $[-1,1]\times\{0\}$. 

The
Knothe-Rosenblatt map is easily computed as $(y_1,y_2)=T(x):=(2(x_1 +
sgn(x_2)),0)$. The solution of any symmetric transportation problem with $%
\lambda^\ve=(\ve,1)$ is $(y_1,y_2)=T^{0}(x):=(x_1,0)$ (no transport may do better than this one, which projects on the support of $\nu$). Therefore, in this example the optimal
transportation maps fail to tend to the Knothe-Rosenblatt map. The reason is the atom in the measure $\nu^2=\delta_{0}$.

{\textbf{Convergence even with atoms}} The convergence result of theorem \ref%
{t1} requires the absence of atoms in the projections of $\nu$. This is
obviously not the case if $\nu$ itself is purely atomic! Yet, this will
precisely be the case we will consider in the algorithm we propose in the
sequel. The same proof may be extended to this case under the following
assumption. Keep the same assumptions on $\mu$ but suppose that $\nu$ is
concentrated on a set $S$ with the property
\begin{equation*}
y, z\in S,\quad y\neq z \Rightarrow y_d\neq z_d.
\end{equation*}
This means that, if we restrict ourselves to $S$, then all the variables $%
y_i $ for $i<d$ are actually a function of the last variable $y_d$. This is
particularly useful when $\nu$ is purely atomic, concentrated on a finite
(or countable) set of points with different $y_d$ components.

Just come back to the proof. The equality
\begin{equation*}
\begin{split}
& \int d\gamma ^{d}(x_{d},y_{d})\psi (x_{d},y_{d})\int \phi (x_{d-1})d\gamma
_{(x_{d},y_{d})}^{d-1} \\
& =\int d\gamma ^{d}(x_{d},y_{d})\psi (x_{d},y_{d})\int \phi
(x_{d-1})d\gamma _{(x_{d},y_{d}),K}^{d-1}
\end{split}%
\end{equation*}%
only relied on $y_{d}$ being a function of $x_{d}$, which is still true. The
other equality, namely
\begin{equation*}
\begin{split}
& \int d\gamma ^{d}(x_{d},y_{d})\psi (x_{d},y_{d})\int \phi (y_{d-1})d\gamma
_{(x_{d},y_{d})}^{d-1} \\
& =\int d\gamma ^{d}(x_{d},y_{d})\psi (x_{d},y_{d})\int \phi
(y_{d-1})d\gamma _{(x_{d},y_{d}),K}^{d-1}
\end{split}%
\end{equation*}%
gives some extra troubles. It is not any more true that $x_{d}$ is a
function of $y_{d}$. Yet, it is true that $y_{d-1}$ is a function of $y_{d}$
and this allows us to reduce the expression to functions of $(x_{d},y_{d})$
only, which is sufficient to get equality. The same procedure may be
performed at subsequent steps as well.

\section{An ODE for the dual variables}

\label{odeforp}

In this section, we consider for simplicity the case $d=2$ (although our
method extends to higher dimensions), $\mu$ uniform on some convex
polyhedron $\Omega$ (for the sake of simplicity we will assume $\vert
\Omega\vert=1$) and $\nu=\frac{1}{N} \sum_{i=1}^N \delta_{y_i}$ where all
the points $y_i\in \Omega$ have a different second coordinate $y_i^{(2)}$.
For every $\eps\geq 0$, let $A_\eps$ be the diagonal $2\times 2$ matrix with
diagonal entries $(\eps, 1)$ and let $c_\eps$ be the quadratic cost defined
by $c_\eps(x,y)=A_\eps(x-y)(x-y)$. We are interested in solving the family
of optimal transportation problems
\begin{equation}  \label{mke}
\inf_{\pi \in {\Gamma}(\mu,\nu) } \int_{\mathbb{R}^d\times \mathbb{R}^d}
c_\eps(x,y) d\pi(x, y)
\end{equation}
for all values of the parameter $\eps\in[0,1]$. It is well-known, that (\ref%
{mke}) can be conveniently solved by the dual problem formulated in terms of
prices:
\begin{equation}  \label{duale}
\sup_p \Phi(p,\varepsilon):=\frac 1N \sum_{i=1}^N p_i+\int_\Omega
p_\eps^*(x)dx,
\end{equation}
where $p_\eps^*(x)=\min_i \{c_\ve(x,y_i)-p_i\}$ and we impose as a
normalization $p_1=0$. For each $\varepsilon$, there is a unique maximizer $%
p(\varepsilon)$. For each $(p,\varepsilon)$ we define $C(p,\varepsilon)_i=%
\{x\in\Omega\,:\,\inf_j c_\ve(x,y_j)-p_j=c_\ve(x,y_i)-p_i\}$. It is easy to
check that $\Phi_\ve:=\Phi(. , \eps)$ is concave differentiable and that the
gradient of $\Phi_\ve$ is given by
\begin{equation*}
\frac{\partial \Phi_\ve}{\partial p_i}(p)=\frac
1N-|C(p,\varepsilon)_i|.
\end{equation*}
By concavity of $\Phi_\eps$, the solution $p(\eps)$ of (\ref{duale}) is
characterized by the equation $\nabla\Phi_\ve(p(\eps)) = 0$. The
optimal transportation between $\mu$ and $\nu$ for the cost $c_\eps$ is then
the piecewise map taking value $y_i$ in the cell $C(p(\varepsilon),
\varepsilon))_i$. Our aim is to charcacterize the evolution of $%
p(\varepsilon)$ as $\eps$ varies. Formally, differentiating the optimality
condition  $\nabla\Phi(p(\eps), \eps) = 0$, we obtain a differential
equation for the evolution of $p(\varepsilon)$:
\begin{equation}  \label{odep}
\frac{\partial}{\partial\varepsilon}\nabla_p\Phi(p(\varepsilon),%
\varepsilon)+D^2_{p,p}\Phi(p(\varepsilon),\varepsilon)\cdot \frac{dp}{%
d\varepsilon}(\varepsilon)=0.
\end{equation}
Our aim now is to show that the equation (\ref{odep}) is well-posed starting
with the initial condition $p(0)$ (corresponding to horizontal cells of area
$1/N$); this will involve computing the second derivatives of $\Phi$ in (\ref%
{odep}), proving their Lipschitz behavior as well as obtaining a negative
bound on the larger eigenvalue of the negative semidefinite matrix $%
D^2_{p,p}\Phi$ .

\smallskip

The price vector $p(\varepsilon)$, along the evolution, will always be such
that all the areas $|C(p,\varepsilon)_i|$ are equal (and are equal to $1/N$%
). Yet, we need to prove that the differential equation is well posed and we
will set it in an open set,
\begin{equation}  \label{defdeO}
{\mathcal{O}}=\left\{(p,\varepsilon)\,:\,\frac{1}{2N}<|C(p,\varepsilon)_i|<%
\frac{2}{N}\;\mbox{for every }i\right\}.
\end{equation}
The initial datum of the equation will be such that $|C(p(0),0)_i|=1/N$ and
we will look at the solution only up to the first moment where it exits ${%
\mathcal{O}}$. Yet inside the set it will be well-posed and it will imply
conservation of the areas. Hence we will never exit ${\mathcal{O}}$.

\smallskip

All the quantities we are interested in depend on the position of the
vertices of the cells $C(p,\varepsilon)_i$, which are all polygons. Let us
call $x(p,\varepsilon)_{i,j}^\pm$ the two extremal points of the common
boundary between $C(p,\varepsilon)_i$ and $C(p,\varepsilon)_j$, that we call
$D(\varepsilon,p)_{i,j}$ (if such a common boundary exists; should it be a
single point we consider the two points as coinciding). Each one of this
points is obtained as the intersection of at least this common boundary with
another one, or with the boundary of $\Omega$ (which is supposed to be a
polygon as well, so that in the neighbourhood of almost any point locally
the boundary is a line). We want to investigate the dependence of these
points with respect to $(p,\varepsilon)$ and prove that this dependence is
Lipschitz. Notice that each point $x(p,\varepsilon)_{i,j}^\pm$ is not
defined for any value of $(p,\varepsilon)$ but only on a certain (closed)
subset of the space $\mathbb{R}^N\times (0,1)$.

\begin{lem}
The positions of the vertices $x(p,\varepsilon)_{i,j}^\pm$ depend in a
Lipschitz way on $p$ and $\varepsilon$.
\end{lem}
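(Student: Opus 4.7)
The plan is to express each vertex as the solution of a $2\times 2$ linear system whose entries depend affinely on $(p,\varepsilon)$, and then to apply Cramer's rule, the only delicate point being a lower bound on the determinant of the system.

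First I would derive the explicit equation of the bisector $D(p,\varepsilon)_{i,j}$ between cells $C(p,\varepsilon)_i$ and $C(p,\varepsilon)_j$. Writing the condition $c_\varepsilon(x,y_i)-p_i = c_\varepsilon(x,y_j)-p_j$ and cancelling the common quadratic terms $\varepsilon x_1^2 + x_2^2$, one obtains an affine equation
$$2\varepsilon(y_j^{(1)}-y_i^{(1)})\,x_1 + 2(y_j^{(2)}-y_i^{(2)})\,x_2 = b_{i,j}(p,\varepsilon),$$
where $b_{i,j}$ is affine in $(p,\varepsilon)$ and polynomial in the data points $y_i,y_j$. Thus $D(p,\varepsilon)_{i,j}$ is a straight line whose normal vector depends affinely on $\varepsilon$ (but not on $p$) and whose offset depends affinely on $(p,\varepsilon)$.

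Next I would observe that each vertex $x(p,\varepsilon)_{i,j}^\pm$ arises as the intersection of exactly two such lines: either a pair of bisectors $D_{i,j}$ and $D_{i,k}$ (or $D_{j,k}$), or the bisector $D_{i,j}$ and a fixed edge of the polyhedron $\partial\Omega$, whose equation does not depend on $(p,\varepsilon)$ at all. In either case the vertex solves a linear system $M(p,\varepsilon)\,x = v(p,\varepsilon)$ with affine dependence on $(p,\varepsilon)$, and by Cramer's rule it is a rational, in fact smooth, function of $(p,\varepsilon)$ as long as $\det M(p,\varepsilon)\neq 0$.

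The main obstacle is therefore the control of $\det M(p,\varepsilon)$ from below. For two bisectors $D_{i,j}$ and $D_{i,k}$ a direct calculation gives
$$\det M(p,\varepsilon) = 4\varepsilon\,\det\bigl(y_j-y_i,\,y_k-y_i\bigr),$$
which is nonzero as long as $\varepsilon>0$ and the three points $y_i,y_j,y_k$ are not collinear; for a bisector and a boundary edge it equals the $2\times 2$ determinant of two vectors depending affinely on $\varepsilon$, and is nonzero whenever the bisector is not parallel to the edge. In either case, the fact that the vertex is an actual extremal point of a common boundary between two cells both of area at least $1/(2N)$ (the definition of $\mathcal{O}$) forces the two lines to cross transversally at the reference point, so $\det M$ does not vanish there and, by continuity, stays bounded away from zero on a neighborhood, yielding local Lipschitz dependence. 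A uniform Lipschitz estimate on any relatively compact subregion of $\mathcal{O}$ then follows by a standard compactness argument: only finitely many combinatorial types of vertices may arise, for each type $\det M$ is continuous and nonvanishing on the corresponding compact piece, hence bounded below, and the numerators appearing in Cramer's rule are polynomial and therefore locally Lipschitz in $(p,\varepsilon)$.
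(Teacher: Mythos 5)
Your reduction of each vertex to a $2\times 2$ affine system and Cramer's rule is exactly the paper's setup, but the decisive step --- a lower bound on $\det M$ that is \emph{uniform} in $(p,\varepsilon)$ --- is where your argument has a genuine gap. Your own formula $\det M=4\varepsilon\,\det(y_j-y_i,\,y_k-y_i)$ shows that this determinant tends to $0$ as $\varepsilon\to0$, so ``nonvanishing at the reference point plus continuity'', together with compactness on \emph{relatively compact} subsets of $\mathcal{O}$, can only produce Lipschitz constants that blow up as $\varepsilon\to0$ (note also that the relevant parameter sets are not compact: $p$ ranges over $\mathbb{R}^{N-1}$ and $\varepsilon$ over an interval open at $0$, and the region where a given combinatorial type of vertex occurs is not compact either). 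That is not enough for the sequel: the ODE for $p(\varepsilon)$ is started at $\varepsilon=0$ and the Euler scheme is run on the whole interval, so one needs Lipschitz constants uniform up to $\varepsilon=0$. Moreover, you never use the standing hypothesis that the target points have pairwise distinct second coordinates; without it even the bisector/boundary determinant degenerates as $\varepsilon\to0$, since the row $\bigl(\varepsilon(y_j^{(1)}-y_i^{(1)}),\,y_j^{(2)}-y_i^{(2)}\bigr)$ collapses.

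The paper closes precisely this gap quantitatively rather than by continuity: it writes $|\det M|$ as the product of the moduli of the two rows times the sine of the angle between them, bounds the moduli from below uniformly in $\varepsilon$ by $|y_j^{(2)}-y_i^{(2)}|>0$ (this is where the distinct-second-coordinate assumption enters) and by $|l|=1$ for the boundary case, and bounds the sine of the angle from below using the area constraint defining $\mathcal{O}$: the rows are normals to two adjacent sides of a convex cell contained in $\Omega$, whose area is at most $(\mathrm{diam}\,\Omega)^2\sin\alpha/2$, so the lower bound $1/(2N)$ on the area forces a lower bound on $\sin\alpha$. This yields a determinant bound independent of $\varepsilon$, and it also resolves the apparent contradiction with the explicit factor $\varepsilon$ in your formula: within $\mathcal{O}$, bisector--bisector vertices simply cannot occur for $\varepsilon$ below some $\varepsilon_0$, since for small $\varepsilon$ the cells are near-horizontal strips whose vertices lie on $\partial\Omega$. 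To repair your proof, replace the continuity/compactness step by this quantitative modulus-and-angle estimate (or an equivalent uniform bound), explicitly invoking the assumption $y_i^{(2)}\neq y_j^{(2)}$.
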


\begin{proof}

Locally it is true that the same point $x(p,\varepsilon)_{i,j}^\pm$ is
defined either by a system of equations
\begin{equation}  \label{syst deux equations}
\begin{cases}
A_\ve(x-y_i)(x-y_i)-p_i=A_\ve(x-y_j)(x-y_j)-p_j, \\
A_\ve(x-y_i)(x-y_i)-p_i=A_\ve(x-y_h)(x-y_h)-p_h%
\end{cases}%
\end{equation}
in the case of a point on the intersection of two common boundaries, or by a
system
\begin{equation}  \label{syst eqn and boundary}
\begin{cases}
A_\ve(x-y_i)(x-y_i)-p_i=A_\ve(x-y_j)(x-y_j)-p_j, \\
Lx=l_0%
\end{cases}%
\end{equation}
in the case of intersection with the boundary $\partial\Omega$ (locally being given by the equation $Lx=l_0$) . The first system,
after simplifying, reads as
\begin{equation}  \label{syst deux equations simplified}
\begin{cases}
2A_\ve(y_j-y_i)(x)=A_\ve(y_j)(y_j)-A_\ve(y_i)(y_i)-p_j+p_i, \\
2A_\ve(y_h-y_i)(x)=A_\ve(y_h)(y_h)-A_\ve(y_i)(y_i)-p_h+p_i,%
\end{cases}%
\end{equation}
and the second as well may be simplified the same way. This means that they
are of the form
\begin{equation*}
M(\varepsilon)x=V(\varepsilon,p)
\end{equation*}
for a matrix $M(\varepsilon)$ which reads, in usual coordinates,
\begin{equation*}
\begin{split}
&M(\varepsilon)= \left(
\begin{array}{cc}
\eps (y_j^{(1)}-y_i^{(1)}) & y_j^{{(2)}} -y_i^{(2)} \\
\eps(y_h^{(1)}-y_i^{(1)}) & y_h^{(2)}-y_i^{(2)}%
\end{array}
\right)\mbox{ in the first case and } \\
&M(\varepsilon)= \left(
\begin{array}{cc}
\eps(y_j^{(1)}-y_i^{(1)}) & y_j^{(2)}-y_i^{(2)} \\
l_1 & l_2%
\end{array}
\right)\mbox{ in the second.}
\end{split}%
\end{equation*}
The vector $V(\varepsilon,p)$ is obtained regarding the right hand sides of
the system, as in (\ref{syst deux equations simplified}). Both $M$ and $V$
depend Lipschitzly on $(\varepsilon,p)$, with uniformly bounded Lipschitz
constants. Hence, to check that the dependence of $x$ on $(\varepsilon,p)$
is Lipschitz, we only need to bound (away from zero) $\det M(\varepsilon)$.
The determinant of a $2\times 2$ matrix is given by the product of the
modulus of the two vector composing its lines, times the sinus of the angle
between them. In the first case the vectors are $A_\ve(y_j-y_i)$ and $%
A_\ve(y_h-y_i)$, while in the second they are $A_\ve(y_j-y_i)$ and $%
l=(l_1,l_2)$. In both cases they are the normal vectors to the sides of the
cell we are considering. This implies, thanks to the lower bound on the
areas of the cells, that the angles between these vectors may not be too
small. Actually, since $\Omega$ is bounded and the cells are convex, the
area of each cell is smaller than $(\mathrm{diam}\Omega)^2\sin\alpha/2,$ $%
\alpha$ being any angle between two neighbour sides. This implies a lower
bound on $\sin\alpha$. The lower bound on the moduli comes from the fact
that the vectors $l$ are chosen with modulus one and the modulus of $%
A_\ve(y_j-y_i)$ is always greater than its vertical component, which is $%
y_j^{(2)}-y_i^{(2)} $, which is supposed different from zero for any pair $%
(i,j)$. Notice that in the first case the matrix has $\varepsilon$ in the
determinant, even if we proved a lower bound on such a determinant,
independent on $\varepsilon$: this agrees with the fact that actually, this
kind of crossing (between two common boundaries of two cells) will only
happen for $\varepsilon\geq\varepsilon_0$ (for $\varepsilon<\varepsilon_0$
we only have almost horizontal strips crossing non-horizontal sides of $\Omega$).\end{proof}
%This determinant is given by $\ve Q_{i,j,h}$ for a quantity $ Q_{i,j,h}$ only depending on the positions of the points $y_i,\,y_j,\,y_h$. If we look locally far from $\ve=0$ it is sufficient to bound $ Q_{i,j,h}$. We claim that we never consider a triplet $i,j,h$ so that $ Q_{i,j,h}=0$. Actually, should this be the case, we should have  the wo vectors  $y_i-y_j$ and $y_i-y_h$ collinear. But this would imply that $A_\ve(y_i-y_j)$ and $A_\ve(y_i-y_h)$ are collinear as well, and since they are the normal vectors to$D(\ve,p)_{i,j}$ and $D(\ve,p)_{i,h}$, and it is not possible that they meet but they are parallel. Hence the lower bound of $|\det M(\ve)|$, for $\ve\geq \ve_0$ is $\ve_0\min\{|Q_{i,j,h}|\,:\,Q_{i,j,h}\neq 0\}$ and is positive.

%{\bf dans l'autre cas j'ai des problmes en plus mais je dirais qu'on s'en sort si l'on dit que les bords de $\Omega$ sont soit horizontaux soit verticaux: dans ce cas il y a un 1 et un 0 dans la matrice et les conditions sont plus simples. mais je souponne que cela devrait marcher en gnral}

\begin{lem}
The function $\Phi$ admits pure second derivatives with respect to $p$ and mixed second derivatives with respect to $p$ and $\ve$, and these second derivatives are Lipschitz continuous.
\end{lem}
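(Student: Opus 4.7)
The plan is to differentiate the identity $\partial \Phi/\partial p_i = 1/N - |C(p,\ve)_i|$ established just above, so that the second derivatives of $\Phi$ reduce to first derivatives of the cell areas $|C(p,\ve)_i|$. Once explicit formulas are at hand, Lipschitz continuity will be read off directly from the previous lemma.

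For $j\neq i$, the common boundary $D_{i,j}(p,\ve)$ satisfies
\[
2\,A_\ve(y_j-y_i)\cdot x = A_\ve y_j\cdot y_j - A_\ve y_i\cdot y_i + p_i - p_j,
\]
and $C_i$ lies on the side where the left-hand side is smaller. A perturbation of $p_j$ alone translates $D_{i,j}$ rigidly along its unit normal $A_\ve(y_j-y_i)/|A_\ve(y_j-y_i)|$ with signed velocity $-1/(2|A_\ve(y_j-y_i)|)$, so the standard sweeping argument yields
\[
\frac{\partial^2 \Phi}{\partial p_j\,\partial p_i}(p,\ve) = \frac{\Hu(D_{i,j}(p,\ve))}{2\,|A_\ve(y_j-y_i)|} \qquad (j\neq i),
\]
while $\sum_i |C_i|=|\Omega|$ forces $\partial^2\Phi/\partial p_i^2=-\sum_{j\neq i}\Hu(D_{i,j})/(2|A_\ve(y_j-y_i)|)$. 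For the mixed derivative $\partial^2\Phi/\partial\ve\,\partial p_i$, a perturbation of $\ve$ both translates \emph{and} tilts the boundaries, so the resulting normal velocity is $x$-dependent; integrating it along $D_{i,j}$ produces an expression of the form $\sum_j\int_{D_{i,j}}(\langle a_{i,j}(p,\ve),x\rangle + b_{i,j}(p,\ve))\,d\Hu(x)$ with smooth coefficients $a_{i,j},b_{i,j}$, which depends only on the length and the midpoint of $D_{i,j}$.

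Lipschitz continuity of these expressions will rest on three ingredients. First, by the previous lemma the endpoints $x(p,\ve)_{i,j}^{\pm}$ are Lipschitz in $(p,\ve)$ on $\mathcal{O}$, hence so are $\Hu(D_{i,j})=|x_{i,j}^+-x_{i,j}^-|$ and the midpoint $(x_{i,j}^+ + x_{i,j}^-)/2$; this remains true across the strata on which the combinatorial structure of the cells changes, because whenever an edge $D_{i,j}$ disappears its two endpoints simply merge, and both the length and the first moment vanish continuously at a Lipschitz rate. Second, the denominator satisfies $|A_\ve(y_j-y_i)|\geq |y_j^{(2)}-y_i^{(2)}|>0$, since the second coordinates of the $y_j$'s are pairwise distinct by assumption, so it is bounded below uniformly in $\ve$ and is smooth in $\ve$. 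Third, the coefficients $a_{i,j},b_{i,j}$ in the tilt formula are polynomial in $(p,\ve)$. Each second derivative thus becomes a finite sum of ratios of Lipschitz functions with denominators uniformly bounded away from zero, and is therefore Lipschitz. The main difficulty lies in the $\ve$-direction, where the non-rigid motion of the boundaries prevents a direct reuse of the pure sweeping argument used for $p_j$; an alternative, if the tilted sweep feels delicate, is to write $|C_i|$ as a signed sum of oriented triangles based at a fixed reference vertex with apices $x(p,\ve)_{i,j}^{\pm}$, and to differentiate in $\ve$ via this explicit formula, reducing Lipschitz control once more to the previous lemma and to the boundedness of $\Omega$.
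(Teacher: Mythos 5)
Your proposal is correct and follows essentially the same route as the paper: differentiate the cell-boundary equations to obtain the normal velocities of the sides, express the second derivatives through the edge lengths, their endpoints/midpoints and the factors $1/(2|A_\ve(y_j-y_i)|)$, and deduce Lipschitz continuity from the Lipschitz dependence of the vertices proved in the previous lemma together with the lower bound $|A_\ve(y_j-y_i)|\ge |y_j^{(2)}-y_i^{(2)}|>0$. The paper treats the $\ve$-derivative exactly by your ``tilted sweep'' (the normal velocity is affine in $x$, so its integral over $D_{i,j}$ is the length times the value at the midpoint, giving formula (\ref{dd3})), so the alternative triangulation argument you mention is not needed.
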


\begin{proof}
We have proven that the positions of the points $x(p,\varepsilon)_{i,j}^\pm$
depend Lipschitzly, with uniformly bounded Lipschitz constants, on $p$ and $%
\varepsilon$. Since the volumes of the cells $C(p,\varepsilon)_i$ are
Lipschitz functions of these points, this implies that $\nabla_p\Phi$ is $%
C^{0,1}$. Hence it admits derivatives almost everywhere and we can compute
them in the following way.

\smallskip

The derivative of a volume of a polygonal cell is given, side by side, by
the length of the side times the average of the components which are normal
to such a side of the two derivatives $\dot{x}(p,\varepsilon)_{i,j}^+$ and $%
\dot{x}(p,\varepsilon)_{i,j}^-$ (the other terms - which are mainly the terms at the corners - are of higher order). The equation of the side $%
D(\varepsilon,p)_{i,j}$ is, as we know,
\begin{equation}  \label{Di}
2A_\ve(y_j-y_i)(x)=A_\ve(y_j)(y_j)-A_\ve(y_i)(y_i)-p_j+p_i
\end{equation}
and the normal unit vector to the side is
\begin{equation*}
n=\frac{A_\ve(y_j-y_i)}{|A_\ve(y_j-y_i)|}.
\end{equation*}
Let us start from the derivatives of the cell $C(p,\varepsilon)_i$ with
respect to a variable $p_j$ with $j\neq i$. We differentiate \eqref{Di} with
respect to $p_j$ and we get
\begin{equation*}
2A_\ve(y_j-y_i)(\dot{x})=-1.
\end{equation*}
This formula only works for $x=x(p,\varepsilon)_{i,j}^+$ and $%
x=x(p,\varepsilon)_{i,j}^-$. Obviously it ony works where they are
differentiable, i.e. almost everywhere. Hence the derivative, by summing up
and rescaling the normal vector, is given by
\begin{equation}  \label{dd1}
\frac{\partial |C(p,\varepsilon)_i|}{\partial p_j}=-\frac{l_{i,j}}{2
|A_\ve(y_j-y_i)|},
\end{equation}
where $l_{i,j}$ is the length of $D(\varepsilon,p)_{i,j}$.

As far as the derivative with respect to $p_i$ is concerned, it is not
difficult to check that we have (by summing up the results on every side)
\begin{equation}  \label{dd2}
\frac{\partial |C(p,\varepsilon)_i|}{\partial p_i}=\sum_{j}\frac{l_{i,j}}{
2|A_\ve(y_j-y_i)|},
\end{equation}
where the sum is performed on all the indices $j$ such that the cell $%
C(p,\varepsilon)_j$ is in contact with the cell $C(p,\varepsilon)_i$.

Automatically, since these derivatives only depend on the values of $l_{i,j}$%
, which depend in a Lipschitz manner on the positions of $%
x=x(p,\varepsilon)_{i,j}^\pm$, they are Lipschitz as well. This proves that $%
\Phi_{\varepsilon}$ is actually $C^{2,1}$ and that these derivatives are
well defined and admit the previous expressions (\ref{dd1})-(\ref{dd2})
everywhere.

\smallskip

The computation of the derivatives with respect to $\varepsilon$ is a bit
trickier. We derive again \eqref{Di}, but with respect to $\varepsilon$.
Since $d A_\ve / d\varepsilon = B$, we get
\begin{equation*}
\begin{split}
2A_\ve(y_j-y_i)(\dot{x})&=-2B(y_j-y_i)(x)+B(y_j)(y_j)-B(y_i)(y_i) \\
&=2B(y_j-y_i)\left(\frac{y_j+y_i}{2}-x\right).
\end{split}%
\end{equation*}
Then we renormalize the normal vector, sum up the results for $%
x=x(p,\varepsilon)_{i,j}^+$ and $x=x(p,\varepsilon)_{i,j}^-$, multiply by
the lengths and sum up the results for all the sides, and get
\begin{equation}  \label{dd3}
\frac{\partial |C(p,\varepsilon)_i|}{\partial \varepsilon}=\sum_{j}l_{i,j}
\frac{B(y_j-y_i)(y_j+y_i-x(p,\varepsilon)_{i,j}^+-x(p,\varepsilon)_{i,j}^-)%
} {2 |A_\ve(y_j-y_i)|} .
\end{equation}
In this case as well the result is Lipschitz in $(p,\varepsilon)$ and hence $%
\nabla_p\Phi$ is differentiable everywhere with respect to $\varepsilon$,
with Lipschitz derivative.

\end{proof}

We can come now back to the evolution of $p=p(\varepsilon)$ and consider
again the differential equation (\ref{odep}). To solve this equation we need
to prove that the matrix $D^2_{p,p} \Phi$ is actually invertible (for
numerical purpose, we will also need  to bound its eigenvalues away from zero). It is important to recall that we
look at the evolution of the vector $p=(p_2,\dots,p_{N})$, since we may
assume $p_1(\varepsilon)=0$ for all $\varepsilon$. Hence, we will not look
at the entries $1$ in the vectors or the matrices. The matrix we consider is
$M:=-(D^2_{p,p}\Phi)_{i,j=2,\dots,N}$ has the following properties:

\begin{itemize}
\item on each line, outside the diagonal we have negative terms $M_{i,j}=-%
\frac{l_{i,j}}{2|A_\ve(y_j-y_i)|}$;

\item each element on the diagonal is the sum of minus all the others on the
same line (hence it is positive), and possibly of the term which should be
in the same line at the first column;

\item an entry $(i,j)$ of the matrix is non-zero if and only if the cells $%
C(p,\varepsilon)_i$ and $C(p,\varepsilon)_j$ share a common boundary with
positive length;

\item in particular, for any pair $(i,j)$, even if the entry at place $(i,j)$
is zero, it is possible to find a path $i=i_0,\,i_1,\,i_2,\dots,i_k=j$ so
that the matrix has non-zero values at all the positions $(i_h,i_{h+1})$;

\item the total of the entries of the first column (the one which is not
present in the matrix) is strictly positive.
\end{itemize}

The invertibility of $M$ is ensured by the following:

\begin{lem}\label{invm}
Let the matrix $M$ satisfy the following properties
\begin{equation*}
\begin{split}
&(H1) \mbox{ for all } i, \quad M_{i,i}\geq \sum_{j\neq i} |M_{i,j}|, \\
&(H2) \mbox{ there exists $i$ such that } M_{i,i}> \sum_{j\neq i} |M_{i,j}|,
\\
&(H3) \mbox{ for any pair $(i,j)$ there is a sequence }\;
i_0,\,i_1,\,i_2,\dots,i_k \\
&\mbox{with }\;i_1=i,\;i_k=j,\; \mbox{ and } M_{i_h,i_{h+1}}\neq 0.
\end{split}%
\end{equation*}
then $M$ is invertible.
\end{lem}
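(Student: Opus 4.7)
The plan is to argue by contradiction: assume some nonzero $v$ lies in $\ker M$ and use (H1)--(H3) to derive a contradiction of a standard Gershgorin / maximum-modulus type. Choose $i_0$ realizing $|v_{i_0}|=\max_k|v_k|$, so $|v_{i_0}|>0$. From $(Mv)_{i_0}=0$ we have $M_{i_0,i_0}v_{i_0}=-\sum_{j\neq i_0}M_{i_0,j}v_j$, and the triangle inequality together with (H1) yields
\[
M_{i_0,i_0}|v_{i_0}|\;\leq\;\sum_{j\neq i_0}|M_{i_0,j}|\,|v_j|\;\leq\;|v_{i_0}|\sum_{j\neq i_0}|M_{i_0,j}|\;\leq\;M_{i_0,i_0}|v_{i_0}|.
\]
All three inequalities are therefore equalities; the middle one forces $|v_j|=|v_{i_0}|$ for every $j$ with $M_{i_0,j}\neq 0$.

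Next I would propagate this information using (H3). Call an index $j$ \emph{saturated} if $|v_j|=|v_{i_0}|$. The computation above shows that $i_0$ is saturated and that every $j$ with $M_{i_0,j}\neq 0$ is saturated as well. But at any saturated index $i$, the same argument (applied to row $i$ of $Mv=0$, with $i$ playing the role of $i_0$) shows that every $j$ with $M_{i,j}\neq 0$ is saturated too. Iterating along the chains provided by (H3), I conclude that every index is saturated, i.e.\ $|v_j|=|v_{i_0}|>0$ for every $j$.

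Finally, apply (H2) at the distinguished index $i_\star$ with $M_{i_\star,i_\star}>\sum_{j\neq i_\star}|M_{i_\star,j}|$. Running the same triangle-inequality chain at row $i_\star$, with $|v_j|=|v_{i_\star}|>0$ for all $j$, yields
\[
M_{i_\star,i_\star}|v_{i_\star}|\;\leq\;|v_{i_\star}|\sum_{j\neq i_\star}|M_{i_\star,j}|\;<\;M_{i_\star,i_\star}|v_{i_\star}|,
\]
a contradiction. Hence $\ker M=\{0\}$ and $M$ is invertible.

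This is the classical argument for irreducibly diagonally dominant matrices, and no serious obstacle is expected. The only point requiring some care is the propagation step: the equality case of the triangle inequality a priori only gives $|v_j|=|v_{i_0}|$ at the immediate row-neighbours of $i_0$ in the graph whose edges are pairs $(i,j)$ with $M_{i,j}\neq 0$, so one genuinely needs the connectivity assumption (H3) to push the equality to every coordinate before invoking (H2) at a single distinguished row.
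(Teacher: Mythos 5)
Your proof is correct and follows essentially the same argument as the paper: pick a coordinate of maximal modulus in a kernel vector, use (H1) to force equality throughout the dominance inequality, propagate saturation along the connectivity graph given by (H3), and then contradict the strict inequality of (H2). The only (harmless) difference is that you propagate equality of the moduli $|v_j|$ and contradict (H2) directly, whereas the paper propagates equality of the values themselves (implicitly using the sign structure of its application) and concludes that the kernel vector would have to be a multiple of $(1,\dots,1)$, which (H2) excludes.
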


\begin{proof}
Let $x\in
\mathrm{Ker}(M)$ and let $\bar{i}$ be an index such that $|x_i|$ is maximal.
We may suppose for simplicity that $x_{\bar i}$ is positive. Then we have
\begin{equation*}
0= M_{\bar i,\bar i}x_{\bar i}-\sum_j M_{\bar i,j}x_{j}\geq M_{\bar i,\bar
i}x_{\bar i}-\sum_j M_{\bar i,j}x_{\bar i}=x_{\bar i}\left(M_{\bar i,\bar
i}-\sum_j M_{\bar i,j}\right)\geq 0.
\end{equation*}
This implies that all inequalities are equalities and in particular $%
x_j=x_{\bar i}$ whenever $M_{\bar i,j}\neq 0$. Hence, the entries of $x$ on
all the indices which are ``neighbours" of $\bar i$ equal $x_{\bar i}$ (and
they are maximal as well). This allows to repeat the argument replacing $%
\bar i$ with another maximizing index $j$ and so on... since any index is
connected by a chain of neighbours to $\bar i$, we get that all the entries
are equal. But this implies that the vector in the kernel we selected must
be a multiple of the vector $(1,1,\dots, 1)$. Yet, this vector is not in the
kernel since the sum of the elements on each line is not zero for all lines,
by assumption $(H2)$. This proves that $M$ is invertible. \end{proof}

Finding a lower bound for the modulus of the eigenvalues of $M$, i.e.
quantifying its inversibility is not straightforward. Indeed, the properties
$(H1)$, $(H2)$, $(H3)$ are not sufficient to get this bound, even if we fix
the norm of the remainding column as the following counter-example shows.
The determinant of the matrices
\begin{equation*}
M_\ve= \left(
\begin{array}{ccc}
1 & -\varepsilon & 0 \\
-\varepsilon & 1 & -(1-\varepsilon) \\
0 & -(1-\varepsilon) & 1%
\end{array}
\right)
\end{equation*}
is $2\varepsilon(1-\varepsilon)\to 0$, which implies that some eigenvalue as
well goes to $0$.

\smallskip

We will obtain a positive lower bound by a compactness argument, but we will
use something stronger than simply assumptions $(H1)$, $(H2)$, $(H3)$. The
idea is that assumption $(H3)$ is not closed, but it stays closed when we
replace it with the stronger condition of the matrix being associated to a
partition (as is the case for $M=-D^2_{p,p} \Phi$). In this case if one
connection degenerates (i.e. a common boundary reduces to a point), some
other connections will play the role.

\begin{lem}
There is a positive uniform lower bound on the least eigenvalue of any matrix $M$ associated to the cell partition corresponding to a pair $(p,\ve)\in\mathcal{O}$.
\end{lem}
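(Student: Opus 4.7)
My plan is to argue by compactness and contradiction. Suppose the infimum of the least eigenvalue over $\mathcal{O}$ were zero; then there would exist a sequence $(p_n,\ve_n)\in\mathcal{O}$ along which $\lambda_{\min}(M(p_n,\ve_n))\to 0$. The goal is to pass to the limit and obtain a matrix $M_\infty=M(p_\infty,\ve_\infty)$ that is singular but still satisfies the hypotheses of Lemma \ref{invm}, hence invertible, a contradiction.

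First I would set up the relevant compactness. The $\ve_n$ live in $[0,1]$, hence are bounded. Since we have normalized $p_1=0$, the constraint that each cell area stays in $(1/(2N),2/N)$ in $\mathcal{O}$ yields a uniform bound on the remaining components $p_i$: if some $p_i$ were to diverge, the corresponding cell $C(p,\ve)_i$ would either swallow $\Omega$ or vanish, contradicting the area bounds. Hence up to a subsequence $(p_n,\ve_n)\to(p_\infty,\ve_\infty)$ in $\overline{\mathcal{O}}\cap(\mathbb{R}^{N-1}\times[0,1])$. By the previous two lemmas, the entries of $M(p,\ve)$, built from the edge lengths $l_{i,j}$ and the weights $|A_\ve(y_j-y_i)|$, depend continuously on $(p,\ve)$; so $M(p_n,\ve_n)\to M(p_\infty,\ve_\infty)$ and by continuity of eigenvalues this limit matrix must be singular.

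Next I would verify that $M(p_\infty,\ve_\infty)$ still satisfies the three hypotheses of Lemma \ref{invm}. Condition $(H1)$, diagonal dominance, is automatic from the construction of $M$ as recorded in the bulleted list preceding Lemma \ref{invm}. For $(H2)$, the strict excess on row $i$ equals $l_{i,1}/(2|A_{\ve_\infty}(y_i-y_1)|)$, so it suffices to find one index $i\ge 2$ such that cell $i$ shares a boundary of positive length with cell $1$ at the limit; since cell $1$ has area at least $1/(2N)$ but does not fill $\Omega$, its relative boundary in $\Omega$ has positive $\huno$-measure and is partitioned among finitely many cells, so at least one such $i$ exists. The delicate point is $(H3)$: some entries $M_{i,j}$ may vanish in the limit because a shared edge shrinks to a point, and I must show connectivity of the graph of non-zero off-diagonal entries is preserved. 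The argument is that the limit cells still form a partition of the connected polyhedron $\Omega$ by convex sets of area at least $1/(2N)$; given any two cells $C_i$ and $C_j$, one can join an interior point of $C_i$ to one of $C_j$ by a piecewise linear path in $\Omega$ chosen generically so as to avoid the finite set of vertices of the tessellation. Such a path visits a finite sequence of cells, and each consecutive pair shares a boundary segment of positive $\huno$-measure, giving exactly a chain $i=i_0,i_1,\dots,i_k=j$ with $M_{i_h,i_{h+1}}\ne 0$.

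The step I expect to be the main obstacle is precisely this verification of $(H3)$ in the limit. The abstract hypothesis $(H3)$ is not closed as a property of matrices, and individual adjacencies may genuinely degenerate along the sequence; the reason the conclusion nevertheless holds is the geometric origin of $M$ as the Hessian coming from a partition of a connected region by cells of uniformly positive area, a property that is closed and globally preserves connectivity of the adjacency graph. Once this is carefully phrased, Lemma \ref{invm} applies to $M(p_\infty,\ve_\infty)$ and contradicts $\lambda_{\min}(M(p_\infty,\ve_\infty))=0$, yielding the desired uniform positive lower bound.
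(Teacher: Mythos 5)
Your proposal is correct and follows essentially the same strategy as the paper: a compactness-and-contradiction argument whose key point is that ``being the matrix associated to a partition of $\Omega$ into convex cells with areas in $[1/(2N),2/N]$'' is a closed condition, so the limit matrix still satisfies $(H1)$--$(H3)$ and Lemma \ref{invm} gives the contradiction. The only differences are minor: you compactify in the variables $(p,\ve)$ (using boundedness of the prices and the Lipschitz/continuous dependence of $M$ on $(p,\ve)$), whereas the paper passes to Hausdorff limits of the cell partitions with a fixed adjacency topology along the subsequence, and you spell out the connectivity of the limit adjacency graph (via a generic path crossing only relative interiors of shared edges), a point the paper asserts without detail.
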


\begin{proof}
The proof will be obtained by contradiction. To this aim, take a sequence of partitions
of $\Omega$ into sets $(\Omega_i^n)_{i=1,\dots,N}$. We assume these sets to
be convex polygons with a bounded number of sides. This is the case for the cells associated to pairs $(p,\ve)$. We also know that
their areas are always bounded between $1/2N$ and $2/N$). These partitions give rise to a certain topology of connections between
the cells. Up to subsequences, we may suppose that this topology is always
the same on all the partitions of the sequence (since the number of possible
topologies is finite). Up to subsequences, we also have convergence in the
Hausdorff distance. This means that for any $i$ we have $\Omega_i^n\to%
\Omega_i$, and this convergence, which is the same as the convergence of all
the vertices, preserves the areas, the convexity, the upper bound on the
number of sides, the fact of being a partition... The matrices associated to
these partitions depend continuously on these sets (with respect to this
convergence, since they actually depend on the positions of the vertices).
Notice that it is possible that a side reduces its length along the sequence
up to becoming a single point in the limit. Yet, two cells which share a
boundary along the sequence will do it along the whole sequence (thanks to
our choice of not changing the topology) and at the limit, either they share
a side as well, or they share a point only, but in this case the terms $%
l_{i,j}$ converged to zero. Hence we can associate to all the partitions
their matrices and this correspondence is continuous. We have a sequence of
matrices $M_n\to M$ and let us suppose that some eigenvalue $\lambda_1^{(n)}$
goes to zero. This would imply that the matrix $M$ is associated to a
partition but has a zero eigenvalue. This is not possible, thanks to the 
proof of lemma \ref{invm} above. $M$ is associated to a partition and hence satisfies
assumptions $(H1)$ and $(H3)$. To check $(H2)$ we observe that the column
that we remove, the first one, is associated to the first cell and, up to
some rescaling but bounded factor $|A_{\varepsilon}(y_1-y_j)|$, its entries
are the lengths of its sides. Yet, this cell conserves the area bounds we
had on the sequence and its entry cannot be all zero.
\end{proof}

From the previous results on the form and the regularity of the derivatives
of $\nabla_p \Phi$, we deduce from the Cauchy-Lipschitz Theorem that the ODE
(\ref{odep}) governing the evolution of the dual variables is well posed and
actually characterizes the optimal prices:

\begin{thm}\label{3.3}
\label{wp} Let $p(\eps)$ be the solution of the dual problem (\ref{duale})
(recall the normalization $p_1(\eps)=0$), then it is the only solution of
the ODE:
\begin{equation}  \label{ode}
\frac{dp}{d\eps}(\eps)=-D^2_{p,p}\Phi(p(\varepsilon), \varepsilon)^{-1}
\left( \frac{\partial}{\partial\varepsilon}\nabla_p\Phi(p(\varepsilon),
\varepsilon) \right)
\end{equation}
with initial condition $p(0)$ such that all the horizontal strips $%
C(p(0),0)_i$ have area $1/N$.
\end{thm}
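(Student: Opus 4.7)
The plan is to invoke the Cauchy-Lipschitz theorem for the ODE \eqref{ode} on the open set $\mathcal{O}$ defined in \eqref{defdeO}, using the regularity and non-degeneracy estimates already collected in the three preceding lemmas, and then to identify the resulting unique trajectory with the dual maximizer $p(\varepsilon)$.

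First I would rewrite \eqref{ode} as $\frac{dp}{d\varepsilon}=F(p,\varepsilon)$ with
\[
F(p,\varepsilon) := -D^2_{p,p}\Phi(p,\varepsilon)^{-1}\,\partial_\varepsilon \nabla_p \Phi(p,\varepsilon).
\]
The second lemma guarantees that the entries of both $D^2_{p,p}\Phi$ and $\partial_\varepsilon \nabla_p \Phi$ are Lipschitz in $(p,\varepsilon)$ on $\mathcal{O}$, with uniform constants. Combined with the uniform positive lower bound on the smallest eigenvalue of the reduced matrix $M=-(D^2_{p,p}\Phi)_{i,j\ge 2}$ provided by the third lemma, this shows that $M^{-1}$ is well defined, uniformly bounded and Lipschitz on $\mathcal{O}$ (using the standard identity $M^{-1}-N^{-1}=M^{-1}(N-M)N^{-1}$). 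Hence $F$ is Lipschitz on $\mathcal{O}$, and Cauchy-Lipschitz produces a unique local $C^1$ solution starting from the prescribed initial datum.

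The key point, and what I expect to be the main obstacle, is to show that this local solution extends to the whole of $[0,1]$ and never exits $\mathcal{O}$. To this end I would differentiate $\varepsilon \mapsto \nabla_p \Phi(p(\varepsilon),\varepsilon)$ along the flow: by the chain rule and the very form of \eqref{ode},
\[
\frac{d}{d\varepsilon}\nabla_p \Phi(p(\varepsilon),\varepsilon)
= D^2_{p,p}\Phi(p(\varepsilon),\varepsilon)\cdot\frac{dp}{d\varepsilon}(\varepsilon)
+ \partial_\varepsilon \nabla_p \Phi(p(\varepsilon),\varepsilon) = 0.
\]
Since $\nabla_p \Phi(p(0),0)=0$ by choice of the initial datum (all horizontal strips have area $1/N$), the identity $\nabla_p \Phi(p(\varepsilon),\varepsilon)=0$ persists along the trajectory, so every cell keeps area exactly $1/N$ for all $\varepsilon$. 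This locks the trajectory strictly inside $\mathcal{O}$ (whose definition only demands the areas to lie between $1/(2N)$ and $2/N$), so the solution cannot reach the boundary of $\mathcal{O}$ nor blow up, and must therefore be defined on the whole interval $[0,1]$.

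It remains to identify this ODE solution with $p(\varepsilon)$. By concavity of $\Phi_\varepsilon$, the maximizer $p(\varepsilon)$ of \eqref{duale} is characterized by $\nabla_p \Phi(p(\varepsilon),\varepsilon)=0$. Thanks to the invertibility of $D^2_{p,p}\Phi$ and the joint $C^1$ regularity of $\nabla_p \Phi$ provided by the preceding lemmas, the implicit function theorem applied to the equation $\nabla_p \Phi(p,\varepsilon)=0$ shows that $\varepsilon \mapsto p(\varepsilon)$ is itself $C^1$ and satisfies exactly \eqref{ode}, with the correct initial condition at $\varepsilon=0$. Uniqueness for the Cauchy problem then forces $p(\varepsilon)$ to coincide with the ODE solution constructed above, which finishes the proof.
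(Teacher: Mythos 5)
Your proposal is correct and follows essentially the same route as the paper: Lipschitz regularity of the second derivatives plus the uniform eigenvalue lower bound on $-D^2_{p,p}\Phi$ give a Lipschitz right-hand side on $\mathcal{O}$, Cauchy-Lipschitz yields uniqueness, conservation of the cell areas (equivalently $\nabla_p\Phi=0$) along the flow keeps the trajectory inside $\mathcal{O}$, and the maximizer of the dual problem is identified with the ODE solution. You merely make explicit some steps the paper leaves implicit (the chain-rule conservation argument and the implicit-function-theorem regularity of $\eps\mapsto p(\eps)$), which is a faithful expansion rather than a different method.
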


\section{Numerical results}

\label{numres}

\subsection{Algorithm}

The algorithm we propose consists simply in discretizing (\ref{ode})
(together with the initial condition $p(0)$ determined as in Theorem \ref%
{3.3}) by an explicit Euler scheme. Let $n$ be some positive integer and $%
h:=n^{-1}$ be the step size. Let us set $p_0=p(0)$ and define prices
inductively as follows.

\begin{itemize}
\item While $(p_k, kh)$ belongs to the open set ${\mathcal{O}}$ defined by (%
\ref{defdeO}), compute:
\begin{equation*}
A_ k:= - D^2_{p,p}\Phi(p_k; kh), \; \delta_k:=\frac{\partial}{%
\partial\varepsilon}\nabla_p\Phi(p_k, kh).
\end{equation*}
Note that computing $A_k$ and $\delta_k$ by formulas (\ref{dd1})-(\ref{dd2})
and (\ref{dd3}), requires to construct the cells $C(kh, p_k)_i$.

\item Solve the linear system $A_k z=\delta_k$; by taking advantage of $A_k$
being positive definite, we use the conjugate gradient algorithm for
minimizing $J_k(z)=A_k (z)(z)-2 \delta_k\cdot z$ to solve this system
exactly in $N-1$ steps. We denote by $z_k$ the solution.

\item Update the prices by setting
\begin{equation*}
p_{k+1}=p_k +h z_k.
\end{equation*}
\end{itemize}

Thanks to the Lipschitz properties established in section \ref{odeforp}, it
is easy to check that for $h$ small enough, $(p_k, kh)$ always remain in ${%
\mathcal{O}}$ and then $p_k$ is well-defined for every $k$ up to $n$. For
such an $h$ and since (\ref{ode}) is generated by a Lipschitz function on ${%
\mathcal{O}}$, it is well-known that the convergence of the Euler scheme is
linear (see for instance \cite{cm}). Denoting by $p^h$ the piecewise
constant function having values $p_k$ on intervals $[kh, (k+1)h)$, we thus
get the following convergence:

\begin{thm}
\label{conv} For $h$ small enough, the algorithm above is well-defined and
the uniform error between $p^h$ and the optimal price $p$ is $O(h)$.
\end{thm}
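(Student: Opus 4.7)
The plan is to reduce the theorem to the classical convergence result for the explicit Euler scheme applied to a Lipschitz ODE, namely $\dot{p}=F(p,\varepsilon)$ with $F(p,\varepsilon):=-D^2_{p,p}\Phi(p,\varepsilon)^{-1}\bigl(\partial_\varepsilon \nabla_p\Phi(p,\varepsilon)\bigr)$. Everything hard has already been done in Section \ref{odeforp}; what remains is a routine bootstrap.

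First I would verify that $F$ is Lipschitz on $\mathcal{O}$. The two previous lemmas in Section \ref{odeforp} give that both $D^2_{p,p}\Phi$ and $\partial_\varepsilon\nabla_p\Phi$ are Lipschitz on $\mathcal{O}$ with uniform constants depending only on the geometric data. The last lemma provides a uniform lower bound $m>0$ on the smallest eigenvalue of $-D^2_{p,p}\Phi$, hence a uniform upper bound $1/m$ on $\|D^2_{p,p}\Phi^{-1}\|$. Using the standard identity $A^{-1}-B^{-1}=A^{-1}(B-A)B^{-1}$ combined with this spectral bound, the map $(p,\varepsilon)\mapsto D^2_{p,p}\Phi(p,\varepsilon)^{-1}$ is Lipschitz on $\mathcal{O}$; a product then shows the same for $F$.

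Second, I would exploit the fact that the exact trajectory $\varepsilon\mapsto p(\varepsilon)$ always satisfies $|C(p(\varepsilon),\varepsilon)_i|=1/N$ for every $i$, so it lies at a uniform positive distance $d_0>0$ from $\partial\mathcal{O}$. Now apply the standard Euler estimate (see \cite{cm}): as long as the iterates $(p_k,kh)$ stay in $\mathcal{O}$, one has
\begin{equation*}
\max_{k\leq n}\|p_k-p(kh)\| \;\leq\; C\,h,
\end{equation*}
with a constant $C$ depending only on $\Lip(F)$ on $\mathcal{O}$, on $\|\ddot p\|_\infty$ (which is controlled because $F$ is Lipschitz), and on the length of the interval $[0,1]$.

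Finally, the two ingredients combine by a continuation argument. Fix $h_0$ with $Ch_0<d_0/2$, and for $h<h_0$ let $k^*$ be the first index (if any) for which $(p_{k^*},k^*h)\notin\mathcal{O}$. For every $k<k^*$ the Euler estimate above applies, giving $\|p_k-p(kh)\|\leq Ch<d_0/2$, so that $(p_k,kh)$ remains strictly inside $\mathcal{O}$; by a straightforward continuity argument $(p_{k^*},k^*h)$ could not be on $\partial\mathcal{O}$ either, contradicting its definition. Hence no such $k^*$ exists, the scheme is well-defined up to $k=n$, and the uniform bound $\|p^h-p\|_\infty=O(h)$ holds on $[0,1]$. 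There is no genuine obstacle here: the only subtlety is the bootstrap needed because the right-hand side of the ODE is only defined on the open set $\mathcal{O}$, and this is handled by the margin $d_0$ inherited from the fact that the exact solution keeps all cell areas equal to $1/N$.
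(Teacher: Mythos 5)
Your proof is correct and follows essentially the same route as the paper, which simply invokes the Lipschitz estimates of Section \ref{odeforp} together with the classical linear convergence of the explicit Euler scheme (citing \cite{cm}) and the observation that for $h$ small the iterates stay in $\mathcal{O}$. You merely spell out the details the paper leaves as ``easy to check'': the Lipschitz character of the right-hand side via the uniform eigenvalue bound and the resolvent identity, and the bootstrap keeping $(p_k,kh)$ inside $\mathcal{O}$ thanks to the margin coming from the exact solution having all cell areas equal to $1/N$.
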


\subsection{Numerical experiments}

The construction of the cells at each step is achieved efficiently by an
implementation in Matlab. In the setting described above where $d=2$, $%
\Omega =\left[ 0,1\right] ^{2}$, $\mu $ is the uniform distribution on $%
\Omega $, $\nu =\frac{1}{N}\sum_{k=1}^{N}\delta _{y_{k}}$, our algorithm
computes the cells $\Omega _{k}^{\varepsilon }=\left\{ x\in \Omega
:T_{\varepsilon }\left( x\right) =y_{k}\right\} $ as well as the prices $%
p_{k}^{\varepsilon }$ of the cell $y_{k}$ for $\varepsilon =0$ to $%
\varepsilon =1$. For $\varepsilon =0$ which is the case where the
transportation plan is the Knothe one, the computational task amounts to
sorting the second component of the $y_{k}$'s. Appropriate discretization
steps are then chosen for the transition $\varepsilon =0$ to $\varepsilon =1$%
. At each step, the tesselation of $\Omega $ into the polyhedral cells $%
\Omega _{k}^{\varepsilon }$ is computed based on the prices $%
p_{k}^{\varepsilon }$. Adjacency information on these cells is computed, as
well as the length of the facet between two cells and the coordinates of its
extreme points. This information allows one to formulate a discretized
version of ODE (\ref{ode}) using an Euler discretization scheme.

For geometric computations we use the Multi-Parametric Toolbox library,
available online at http://control.ee.ethz.ch/ mpt. In particular, \texttt{%
polytope} computes the tesselation of $\Omega $ into the polyhedral cells $%
\Omega _{k}^{\varepsilon }$. (A slighlty modified version of) \texttt{%
mpt\_buildAdjacency} extracts adjacency information on this tesselation,
from which the vertices and the lengths of the sides between two cells can
be deduced. The library also incorporates convenient graphical routines.

\smallskip

\textbf{Error analysis} Some numerical examples are presented below, for
which relative errors in cell areas (i.e. deviation from the optimality
conditions) are given as well as a comparison between the tesselations
obtained with our method and the true solution. Another way to test our
method is as follows. Let us consider the set
\begin{equation*}
C:=\left\{z(\gamma):=\left(\int_{\mathbb{R}^2\times \mathbb{R}^2} x_1 y_1 d
\gamma, \int_{\mathbb{R}^2\times \mathbb{R}^2} x_2 y_2 d\gamma \right), \;
\gamma \in \Gamma(\mu, \nu)\right\}.
\end{equation*}
$C$ is a closed convex subset of $\mathbb{R}^2$ and it is strictly convex in
the sense that its boundary contains no line segment. Denoting by $%
\gamma_\eps$ the solution of (\ref{mke}), it is easy to check that $z(\eps%
):=z(\gamma_\eps)$ is an extreme point of $C$ and that $\eps z_1+z_2=\eps %
z_1(\eps)+ z_2(\eps)$ is the equation of a supporting line of $C$ at $z(\eps)
$ and this supporting line intersects $C$ only at $z(\eps)$. If we consider
the \emph{correlation} curve $\eps\in (0,1)\mapsto z(\eps)$ it can be
represented as the graph of a concave decreasing function whose slope (when
it exists) at point $z(\eps)$ is $-\eps$. In our numerical test, we will
also present graphs comparing the true concave correlation curve to the one
computed by our method.

\smallskip

We give three instances of executions of our algorithm, with samples of
respectively 5, 10, and 15 points. Taking weights $(\eps, \eps^{-1})$ with $\eps\in (0,+\infty)$ (rather than $(\eps, 1)$ with $\eps\in (0,1)$) we get the full evolution of the optimal transports from one Knothe's transport (horizontal strips) to the other (vertical strips). Further examples as well as videos can be found at \texttt{http://alfred.galichon.googlepages.com/anisotropic}. It should also be pointed  that our method does not approximate the solution of a single optimal transportation problem but a whole family of such problems (which actually explains relatively high running times).

\textbf{Five sample points. }We take as our sample set a sample of five
points. We get the following errors:
$$
\begin{tabular}{||l|l|l|l|l|l|l||}
\hline\hline
\textbf{\# steps} & \multicolumn{5}{|l|}{\textbf{Relative errors in cell
areas}} & \textbf{Time} \\ \hline
100 & -4.41\% & 2.66\% & 3.41\% & -2.46\% & 0.80\% & 66 s \\ \hline
500 & -0.88\% & 0.54\% & 0.68\% & -0.49\% & 0.16\% & 349 s \\ \hline\hline
\end{tabular}
$$
for which we draw in Figure \ref{Figure:Sample5} the partition obtained for $%
\varepsilon =1$ using an exact method, as well as the true evolutions of the
componentwise correlations of the $x$'s and the $y$'s from $\varepsilon =0$
until $\varepsilon =1$.

\begin{figure}[h]
\begin{center}
\epsfig{figure=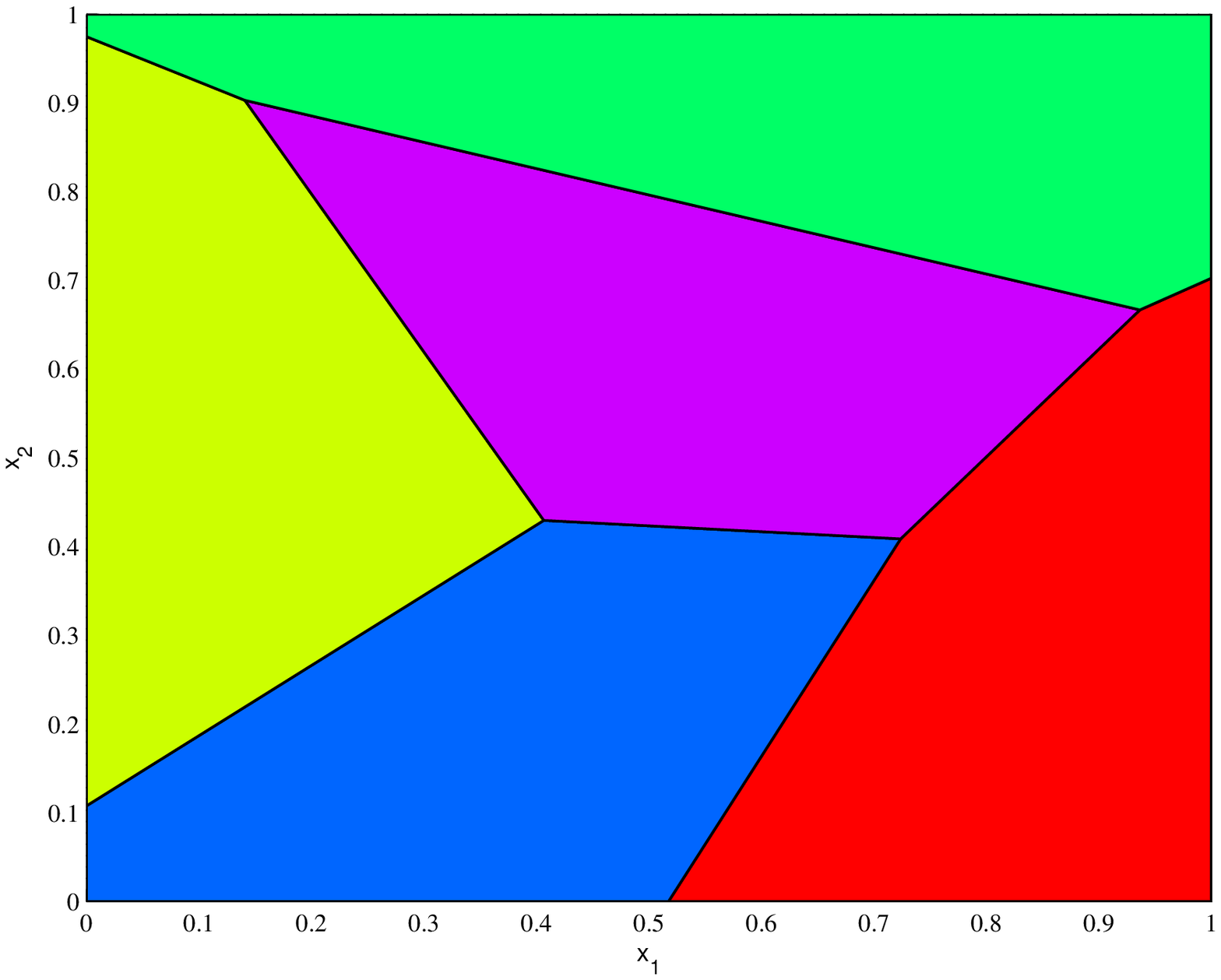, width=2.5in,height=2.5in} %
\epsfig{figure=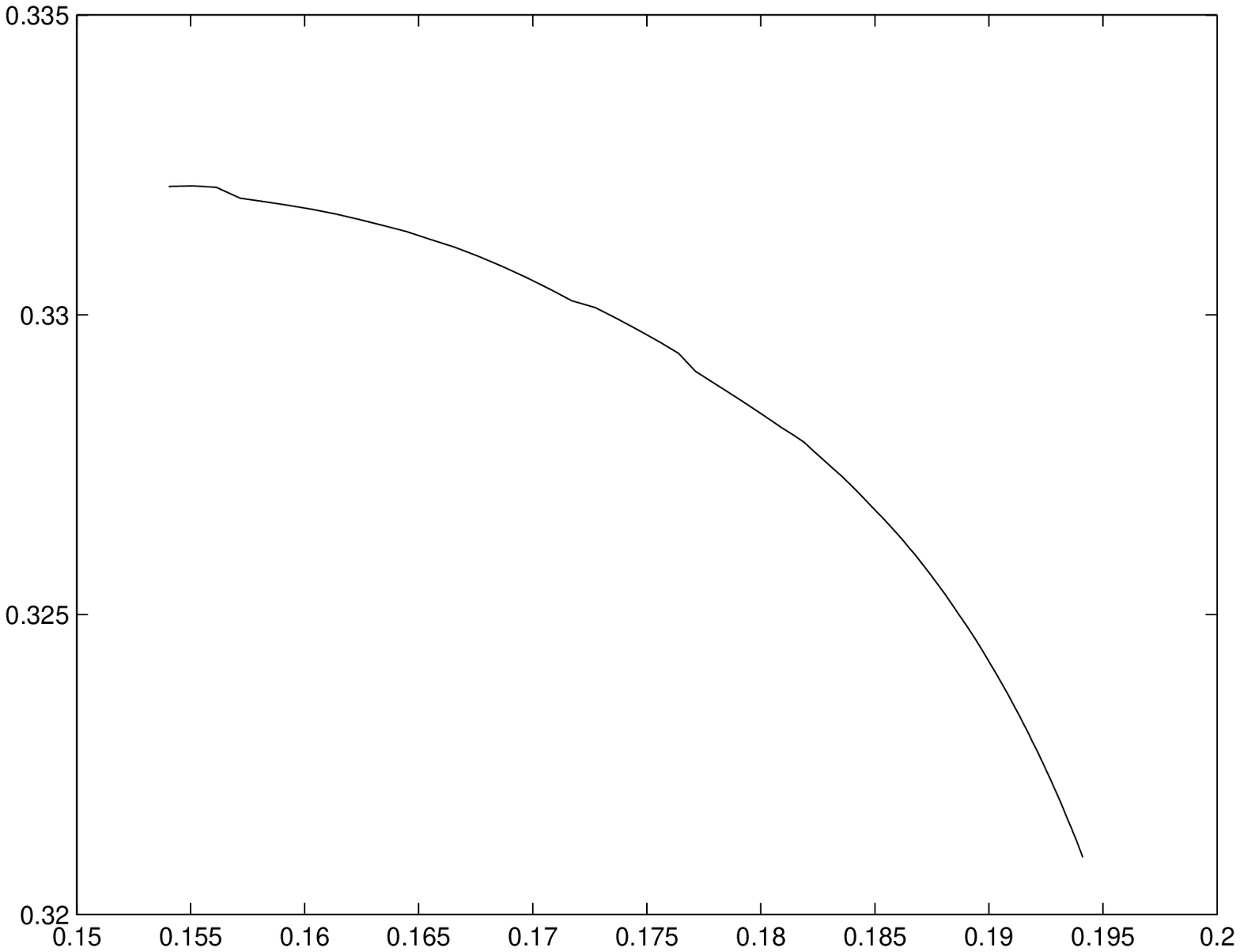,width=2.5in,height=2.5in} %
\epsfig{figure=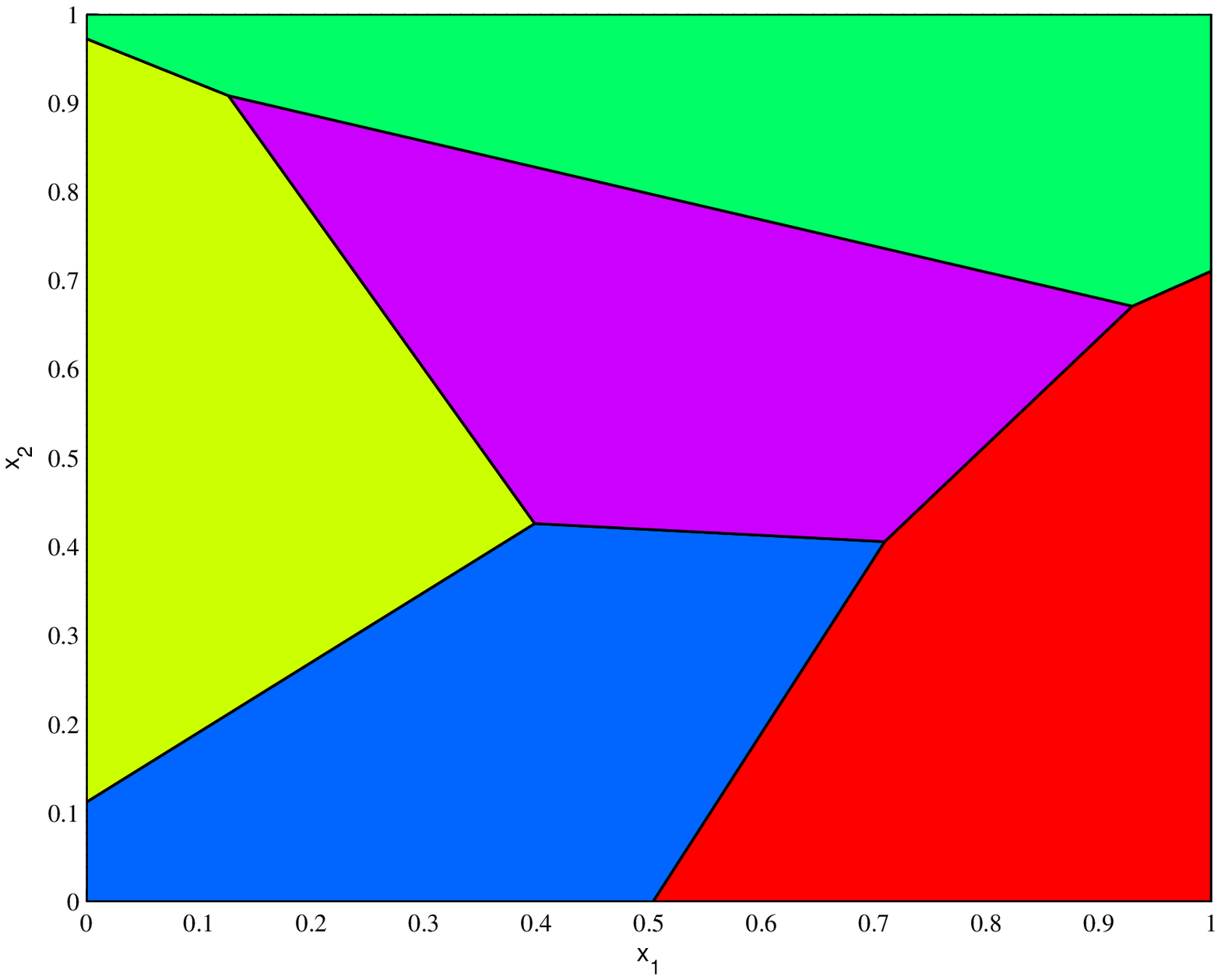,width=2.5in,height=2.5in} %
\epsfig{figure=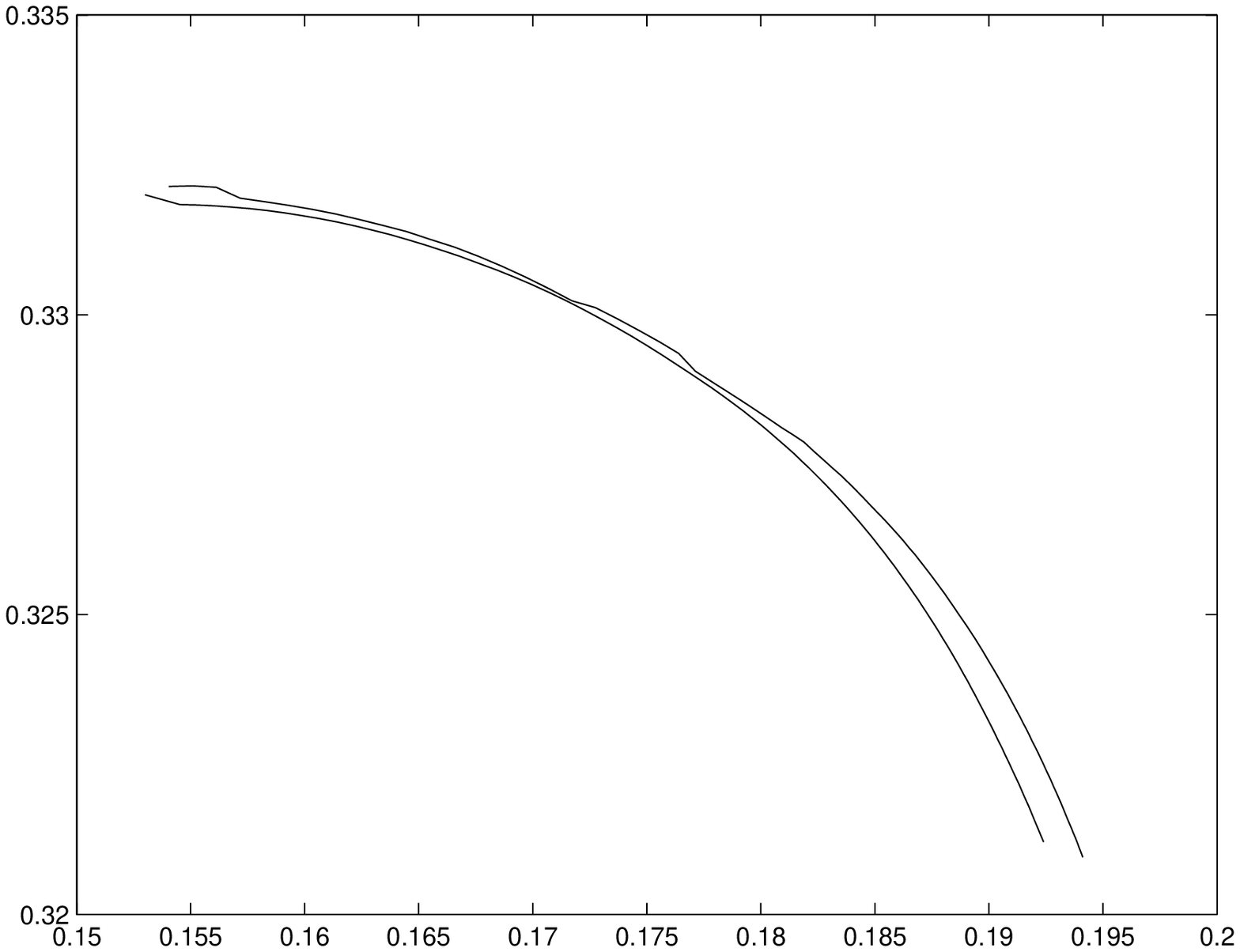, width=2.5in,height=2.5in} %
\epsfig{figure=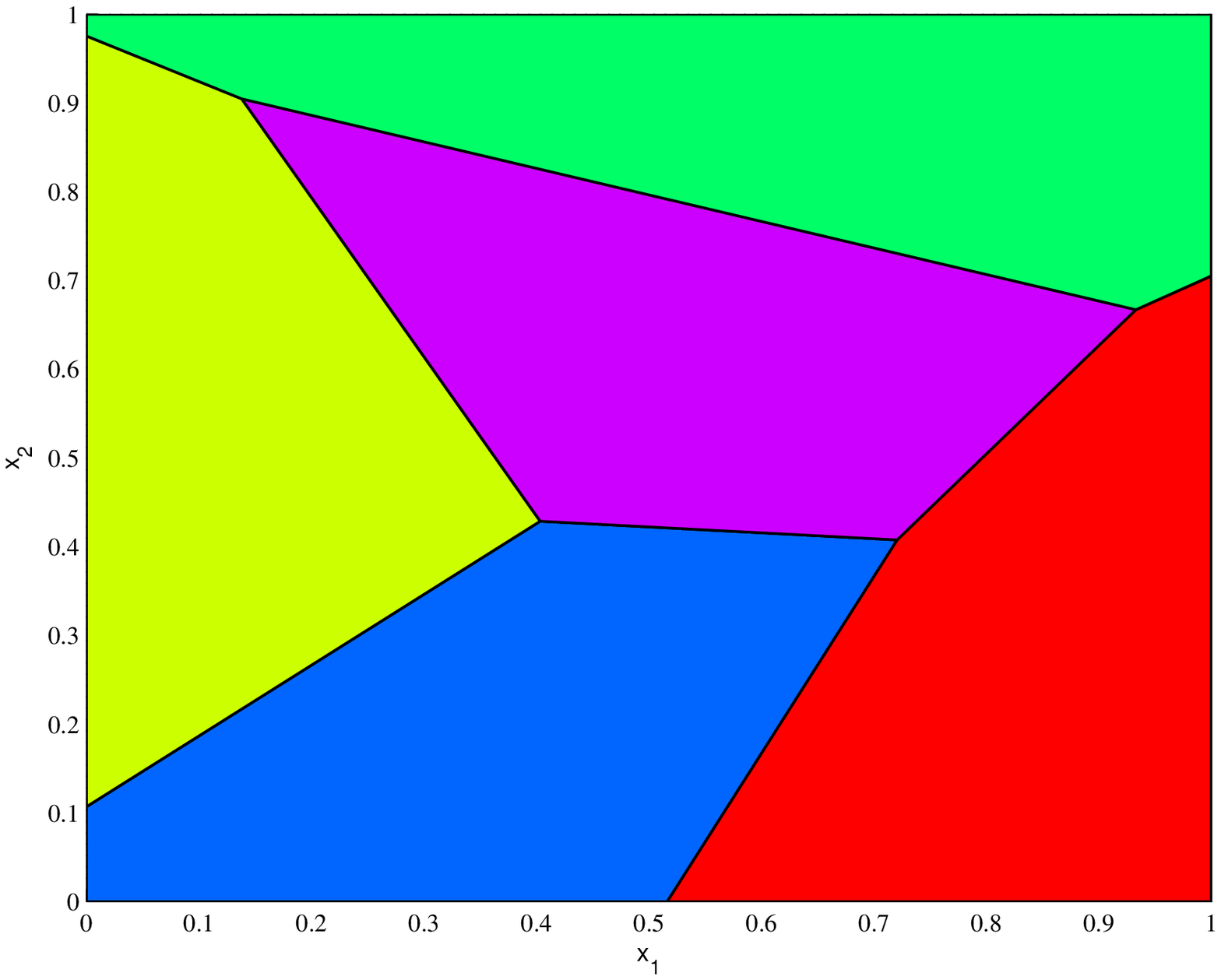,width=2.5in,height=2.5in} %
\epsfig{figure=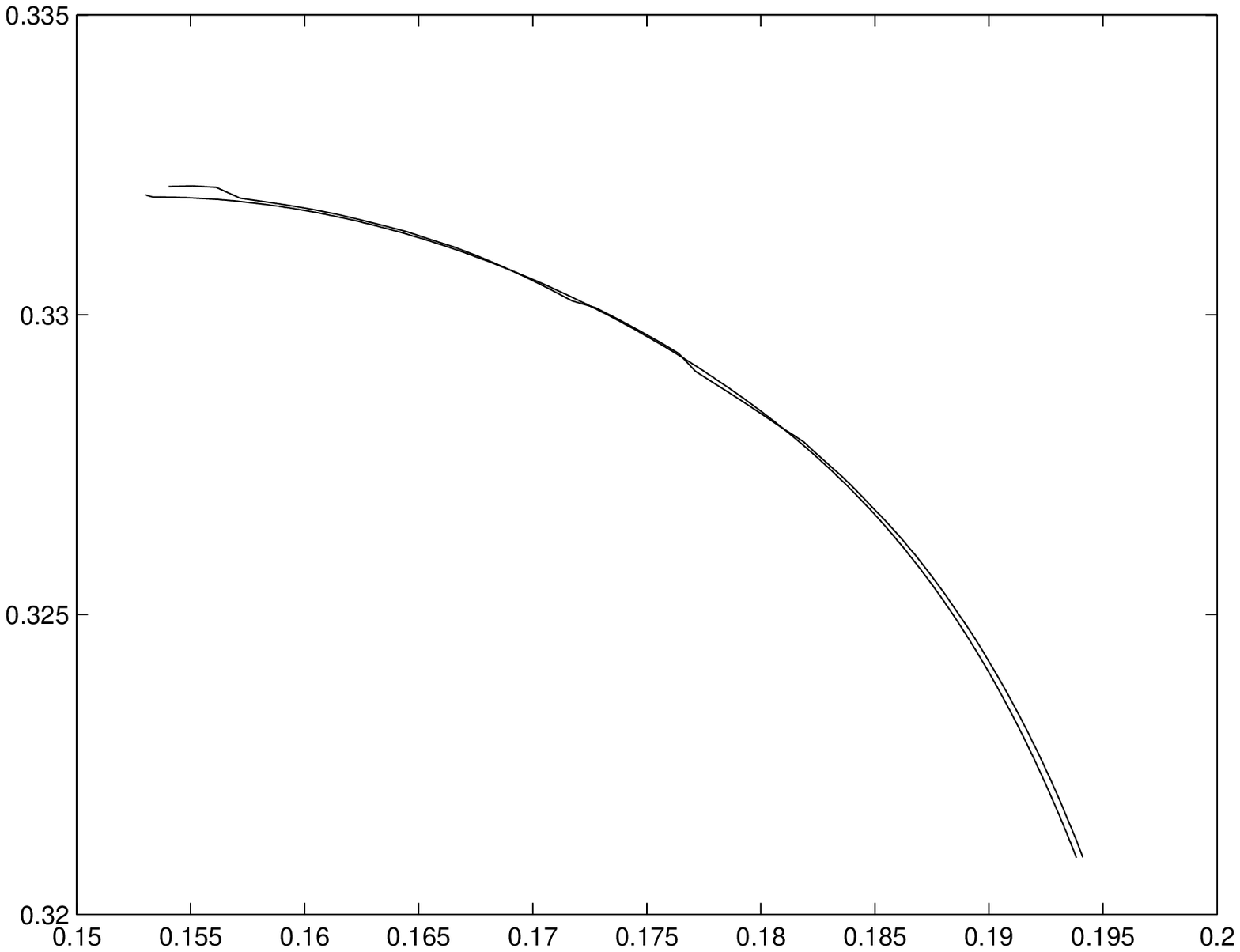, width=2.5in,height=2.5in}
\end{center}
\caption{Five sample points. Top row: exact algorithm (gradient method).
Middle row: continuation algorithm, 100 steps. Bottom row: continuation
algorithm, 500 steps.}
\label{Figure:Sample5}
\end{figure}

\begin{figure}[h]
\begin{center}
\epsfig{figure=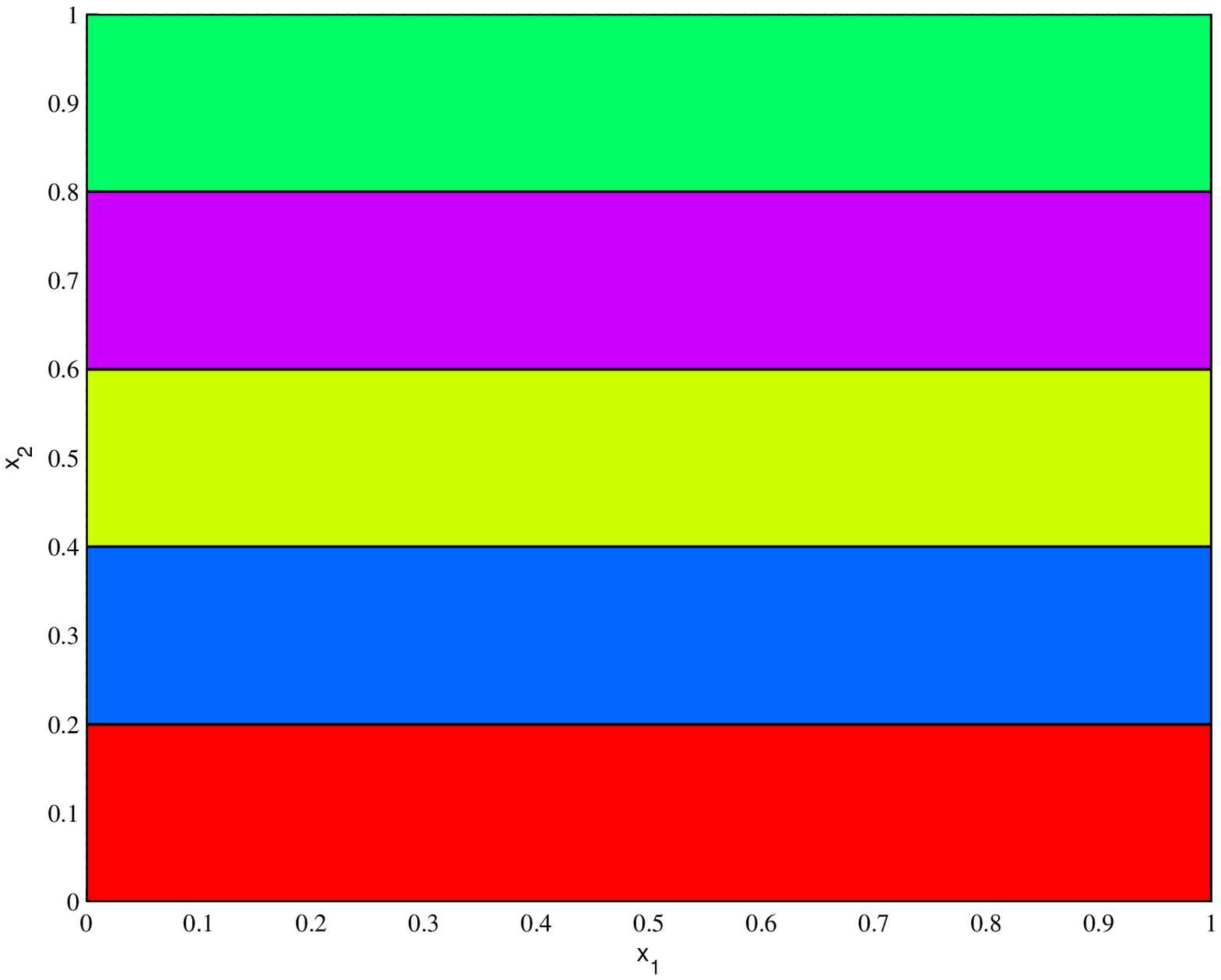, width=1.5in,height=1.5in} %
\epsfig{figure=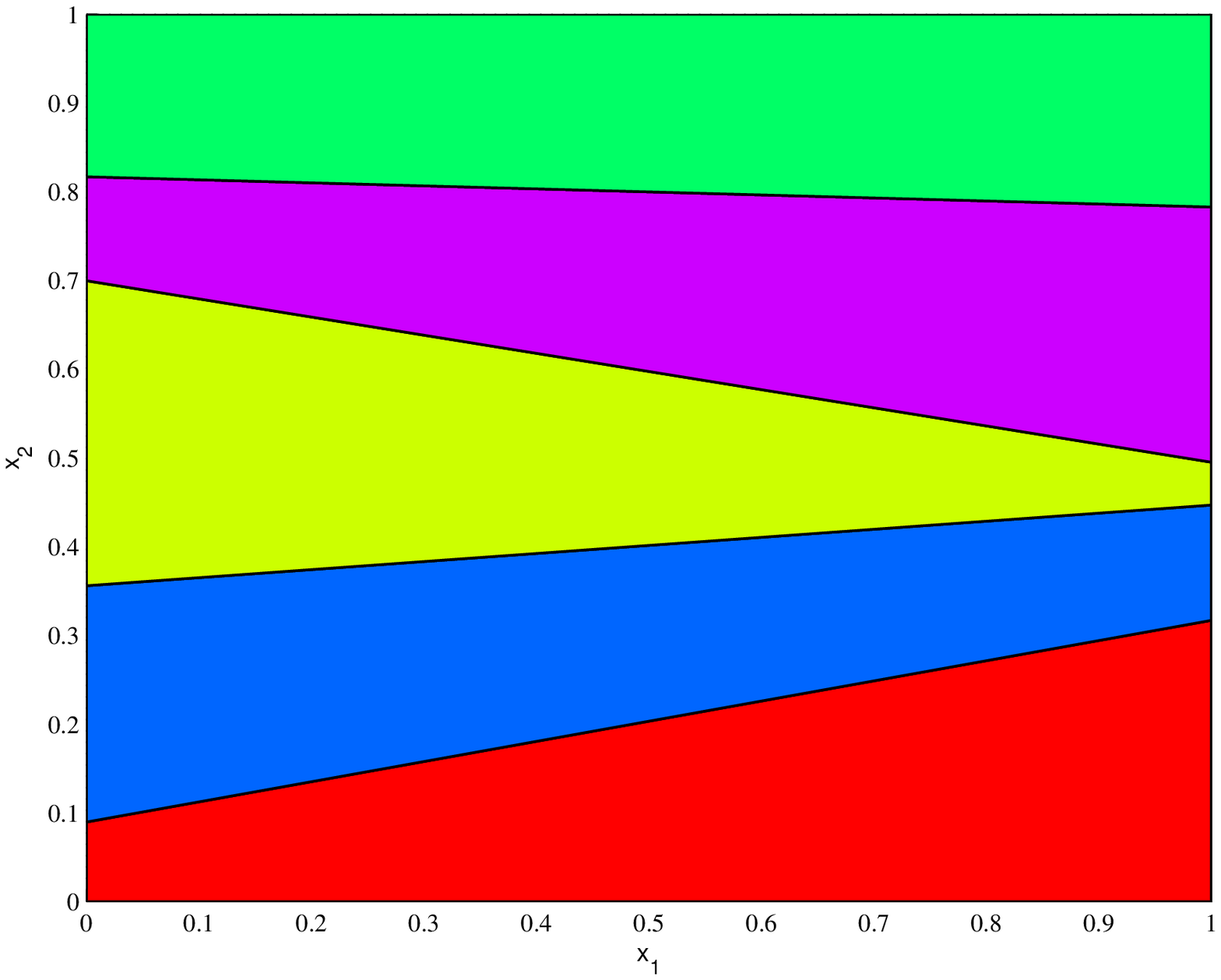, width=1.5in,height=1.5in} %
\epsfig{figure=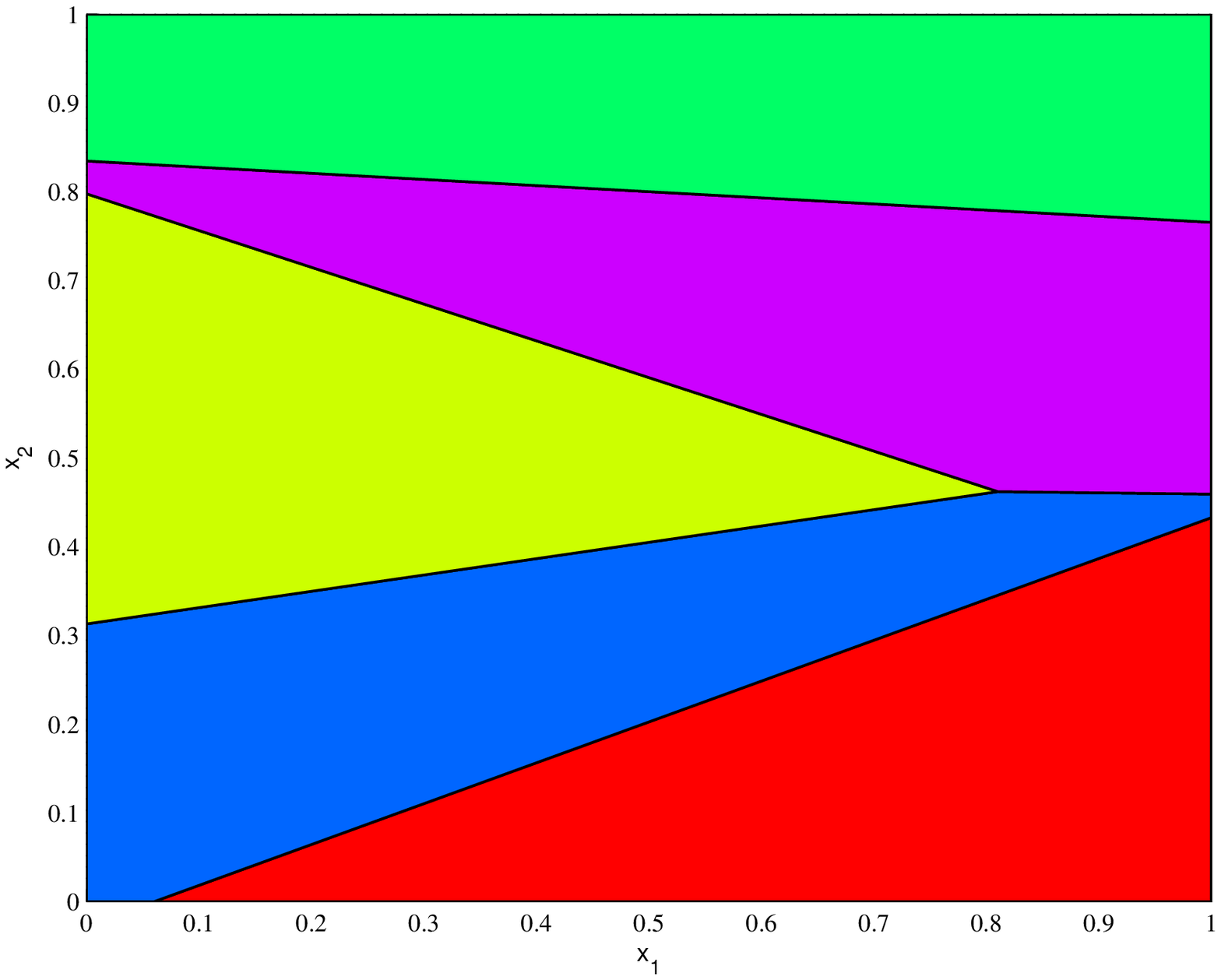,width=1.5in,height=1.5in} %
\epsfig{figure=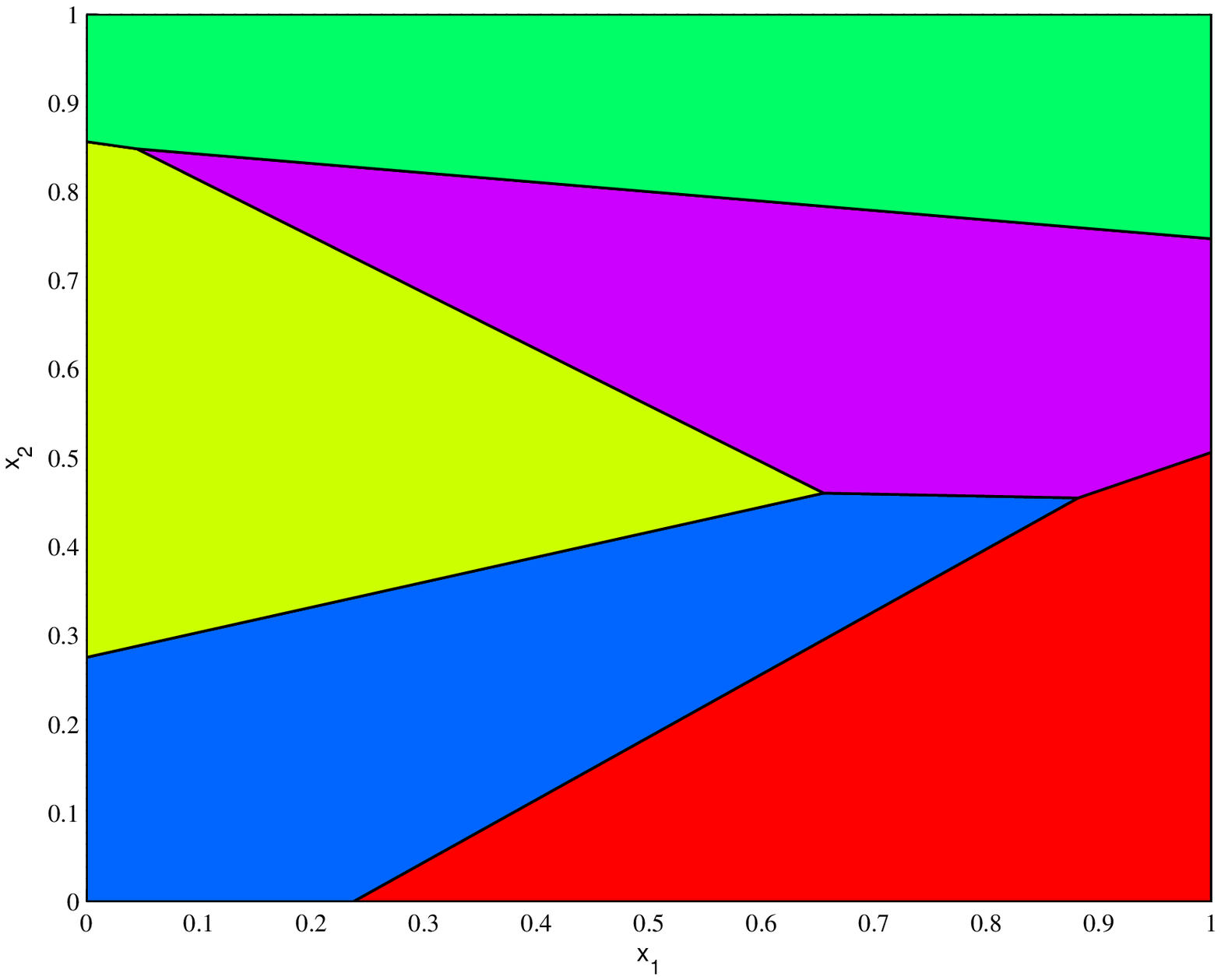,width=1.5in,height=1.5in} %
\epsfig{figure=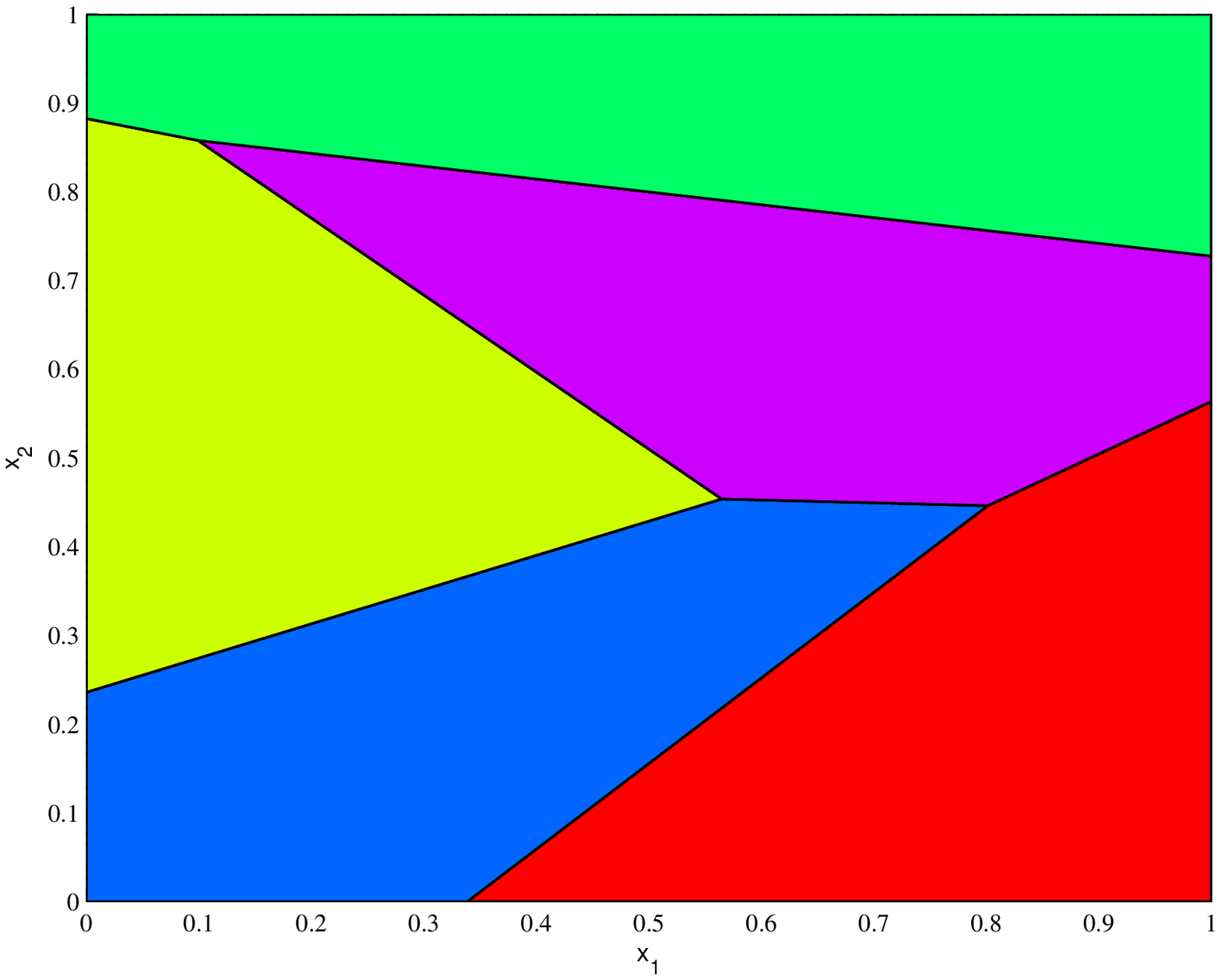,width=1.5in,height=1.5in} %
\epsfig{figure=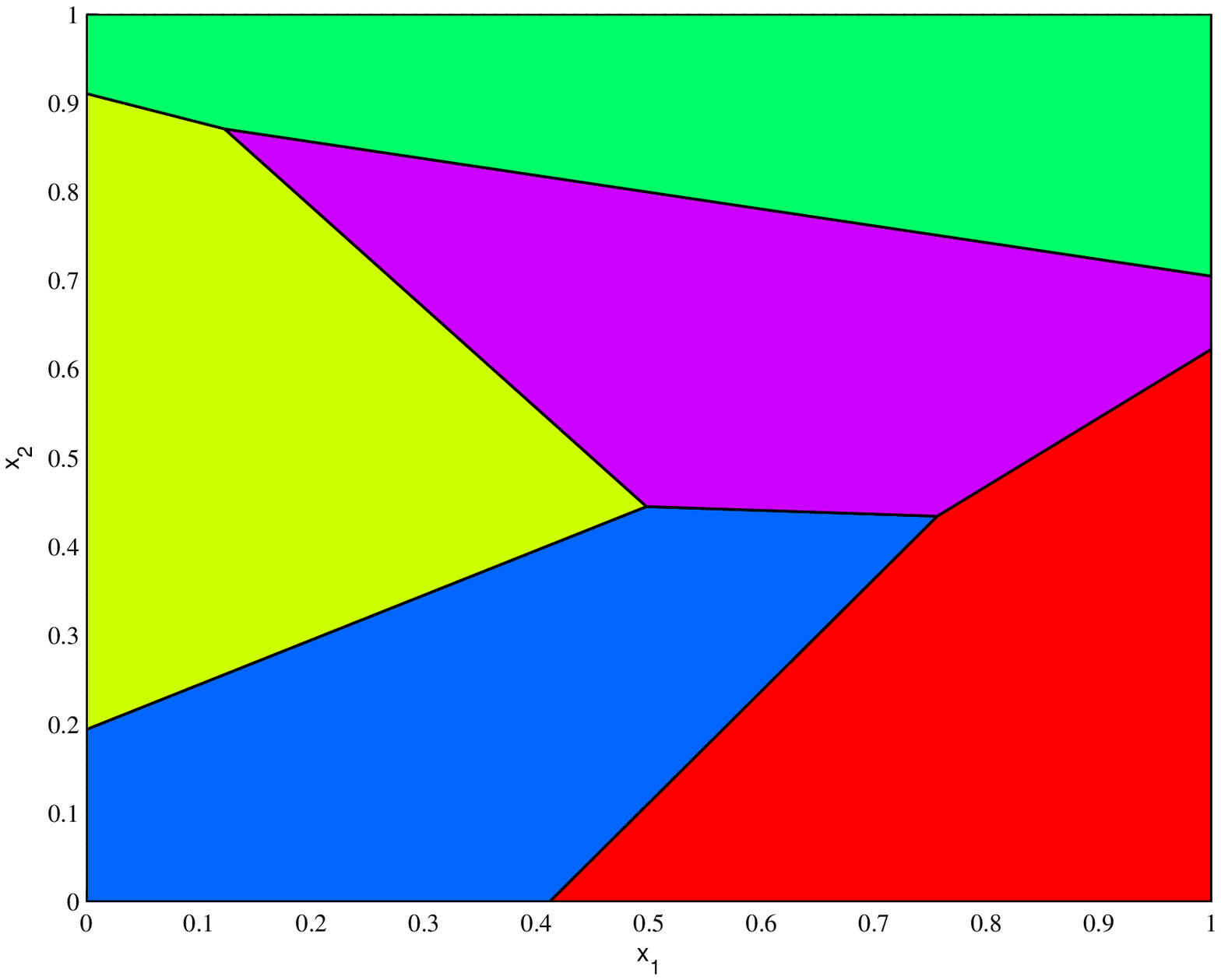,width=1.5in,height=1.5in} %
\epsfig{figure=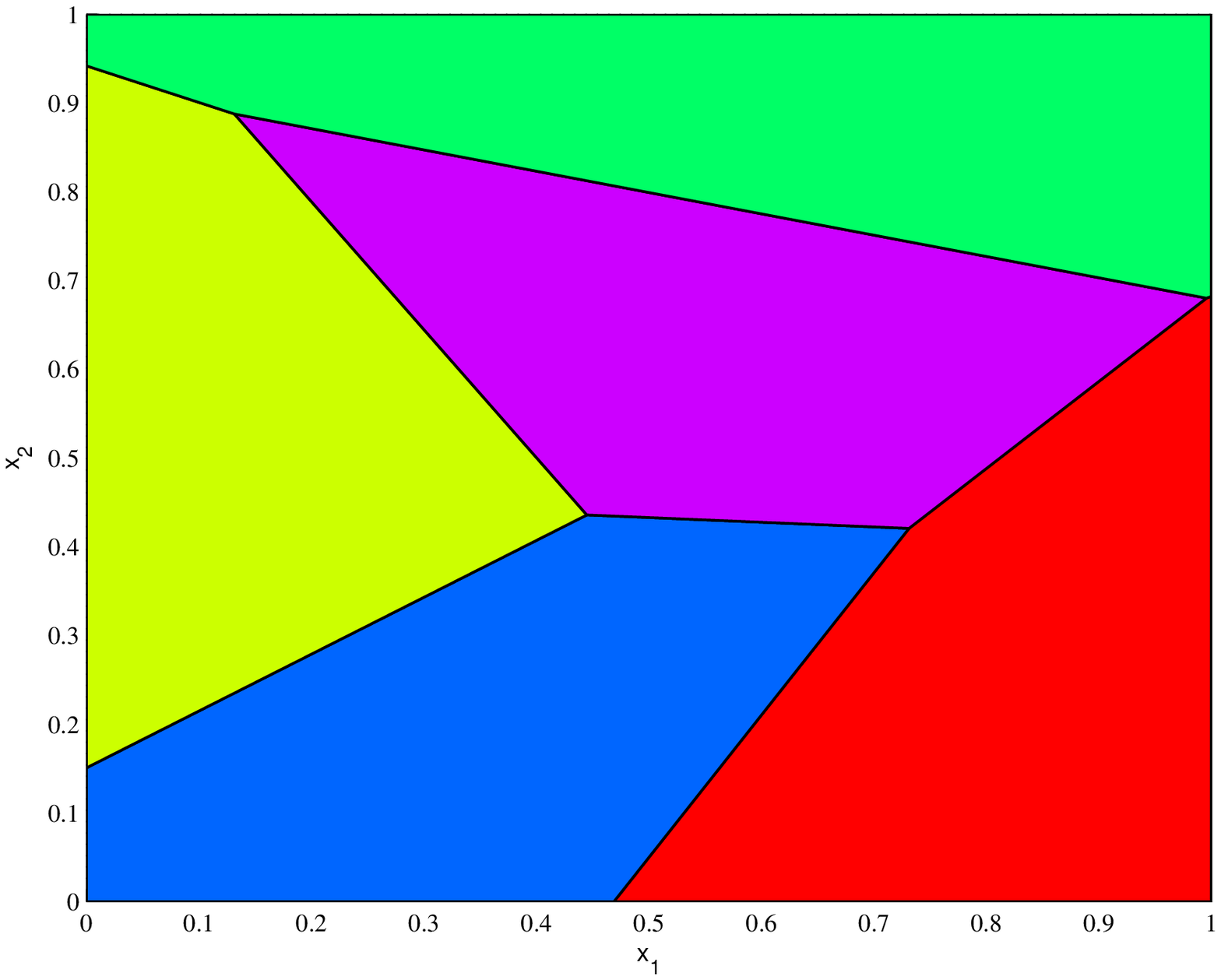,width=1.5in,height=1.5in} %
\epsfig{figure=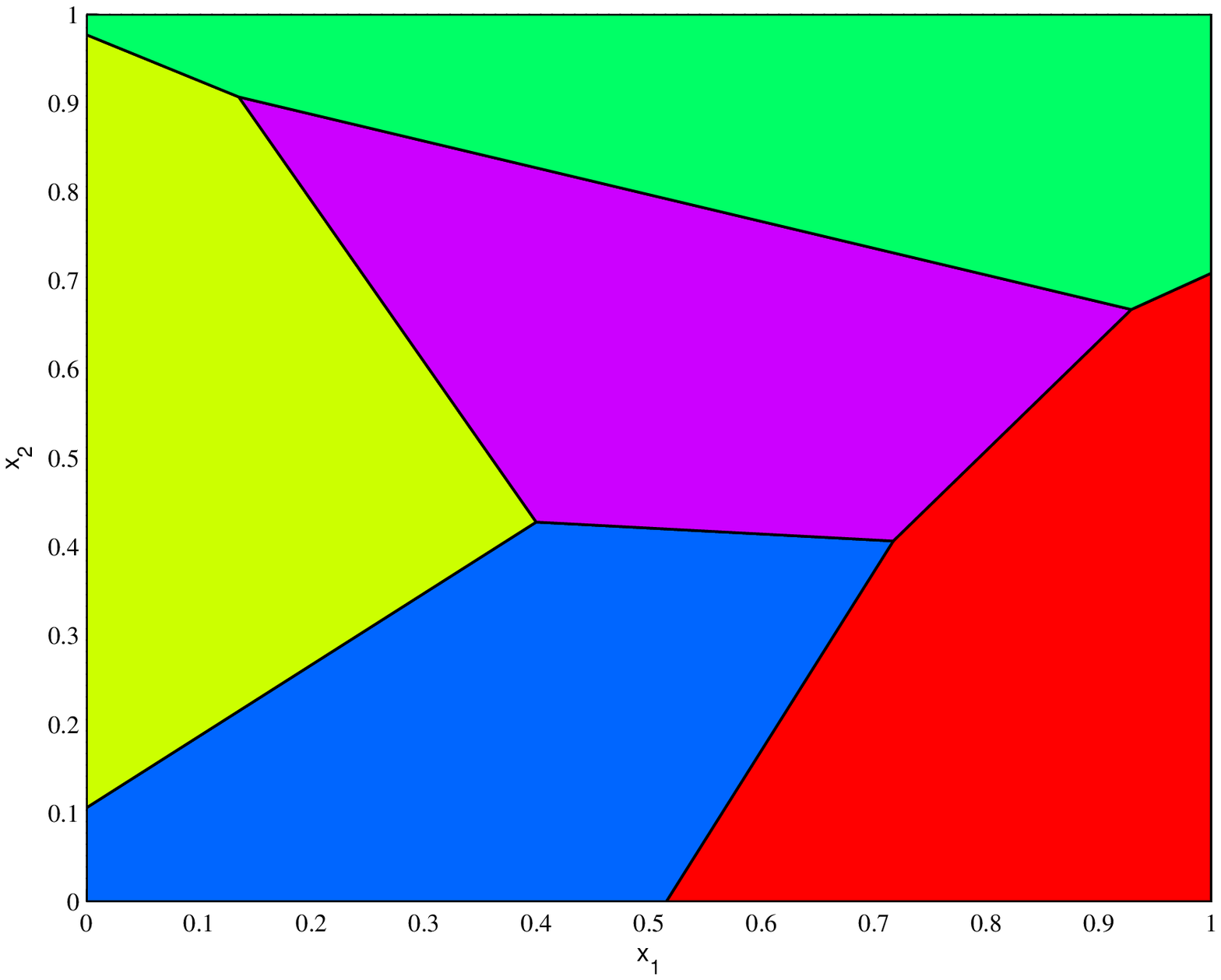,width=1.5in,height=1.5in} %
\epsfig{figure=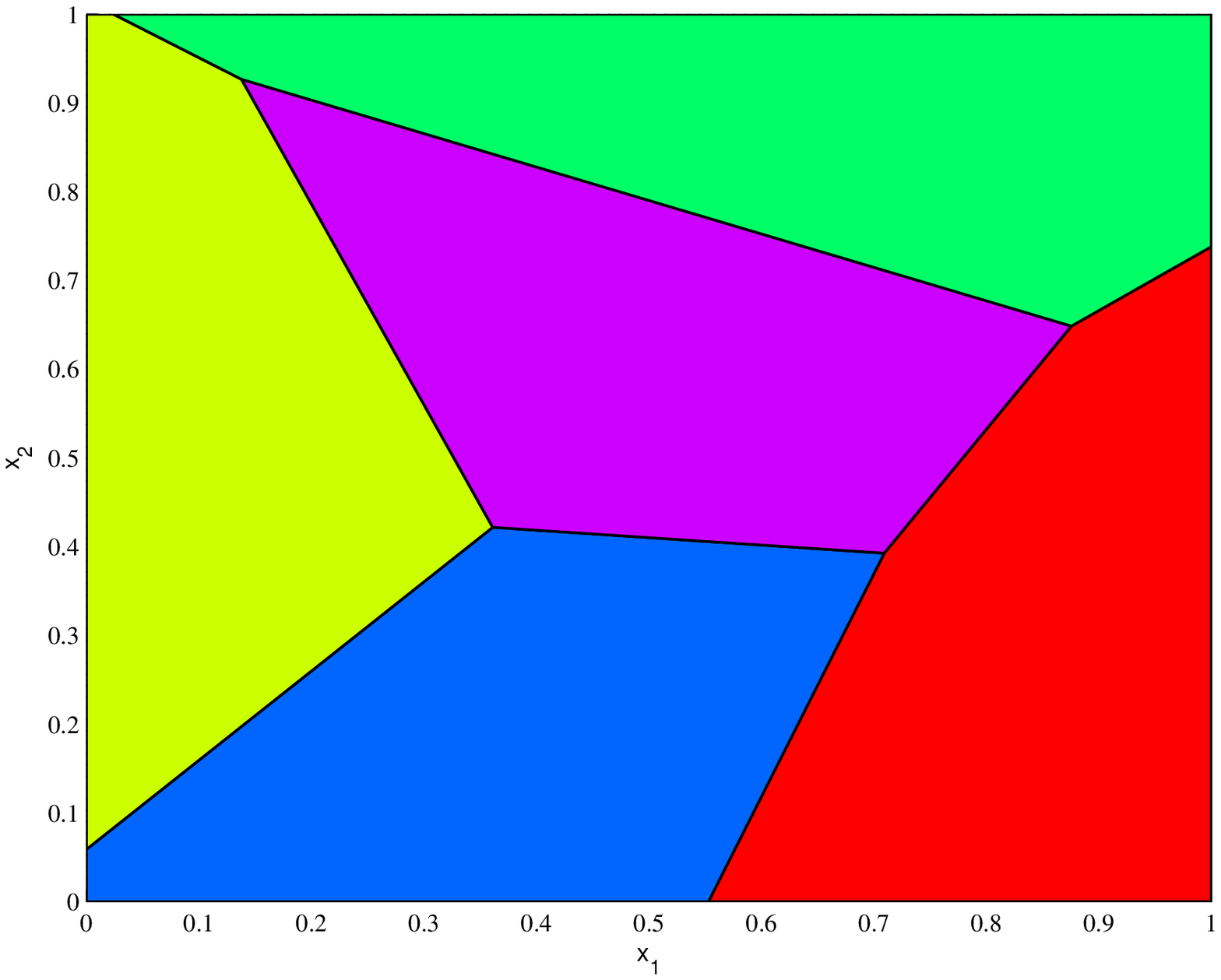,width=1.5in,height=1.5in} %
\epsfig{figure=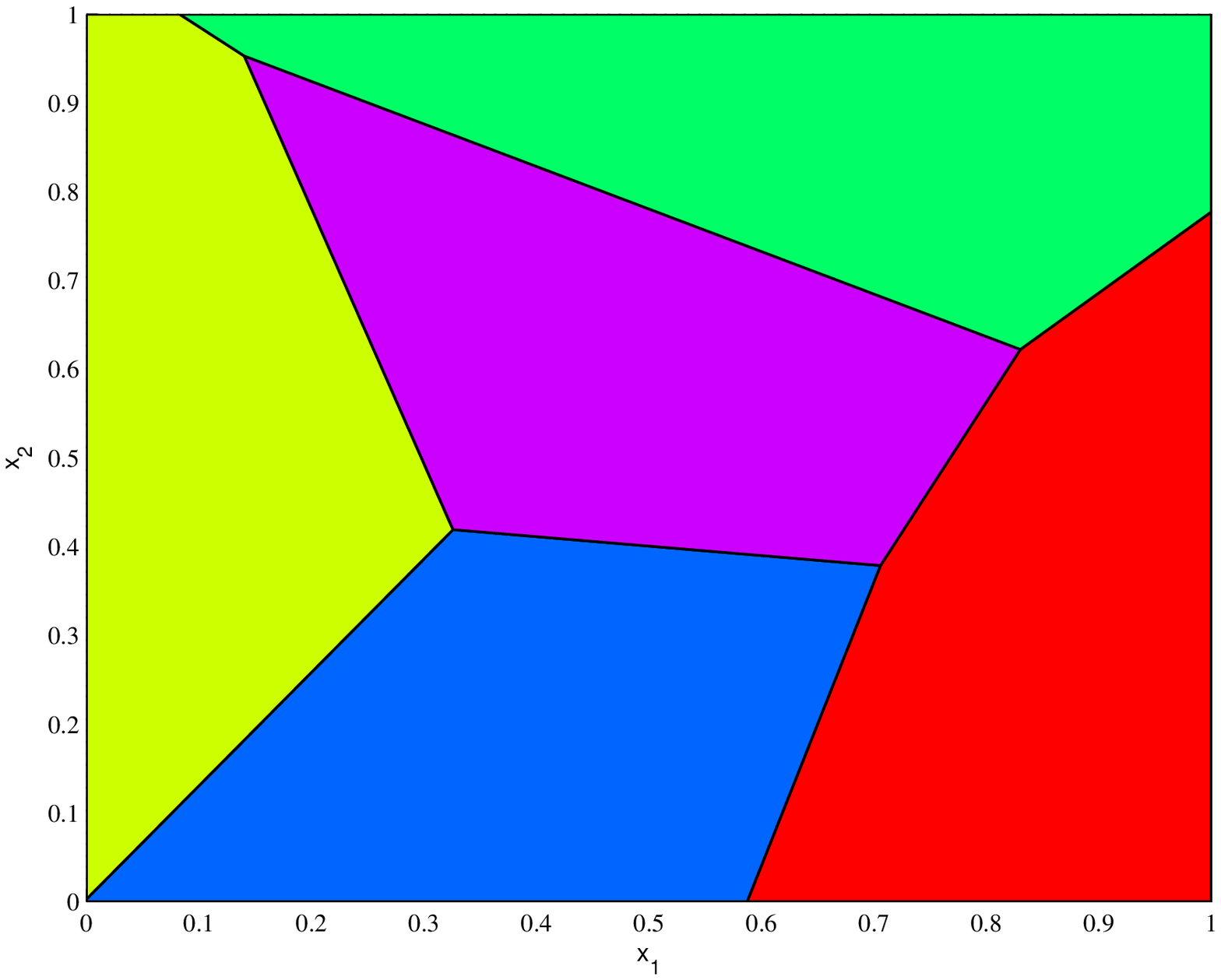,width=1.5in,height=1.5in} %
\epsfig{figure=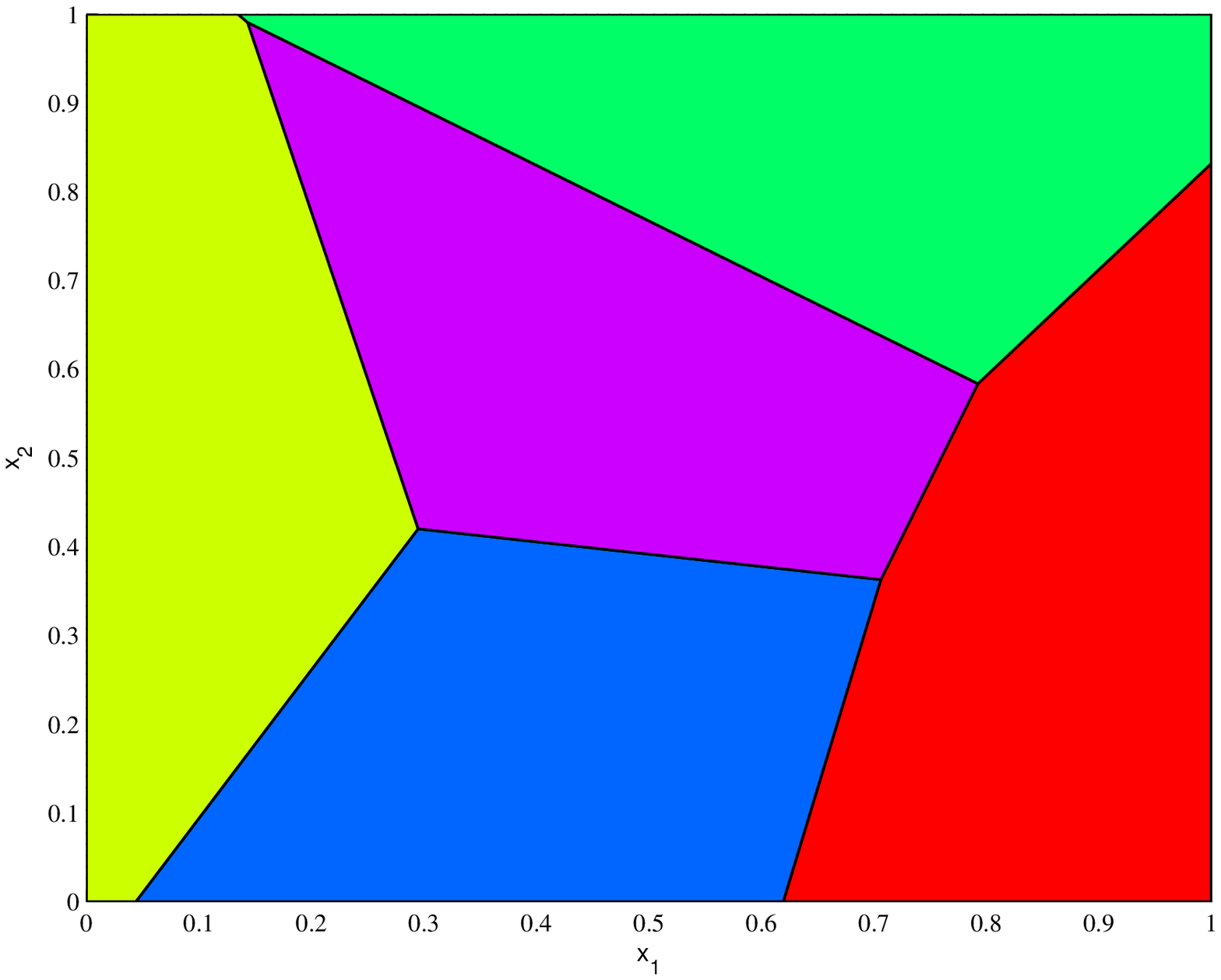,width=1.5in,height=1.5in} %
\epsfig{figure=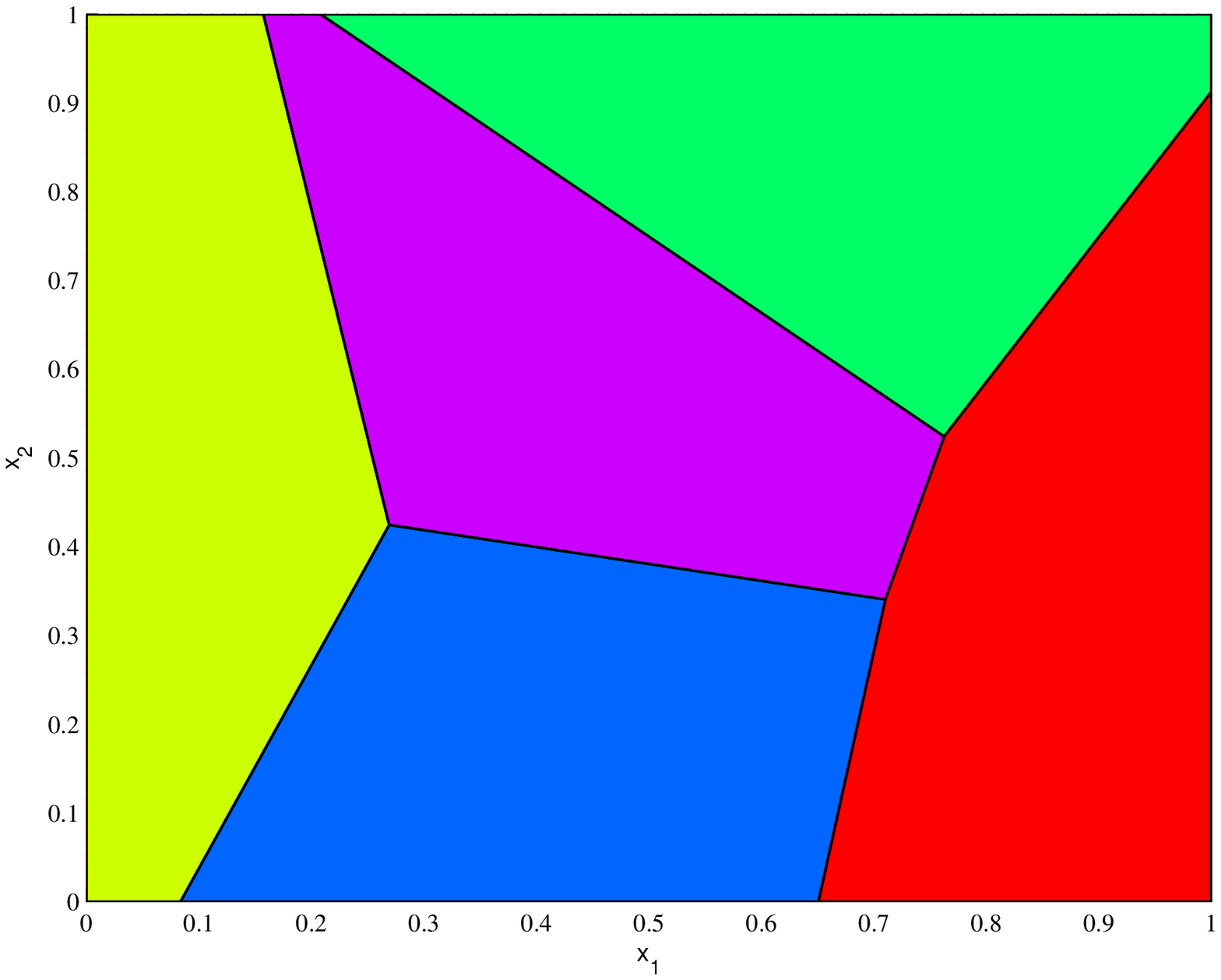,width=1.5in,height=1.5in} %
\epsfig{figure=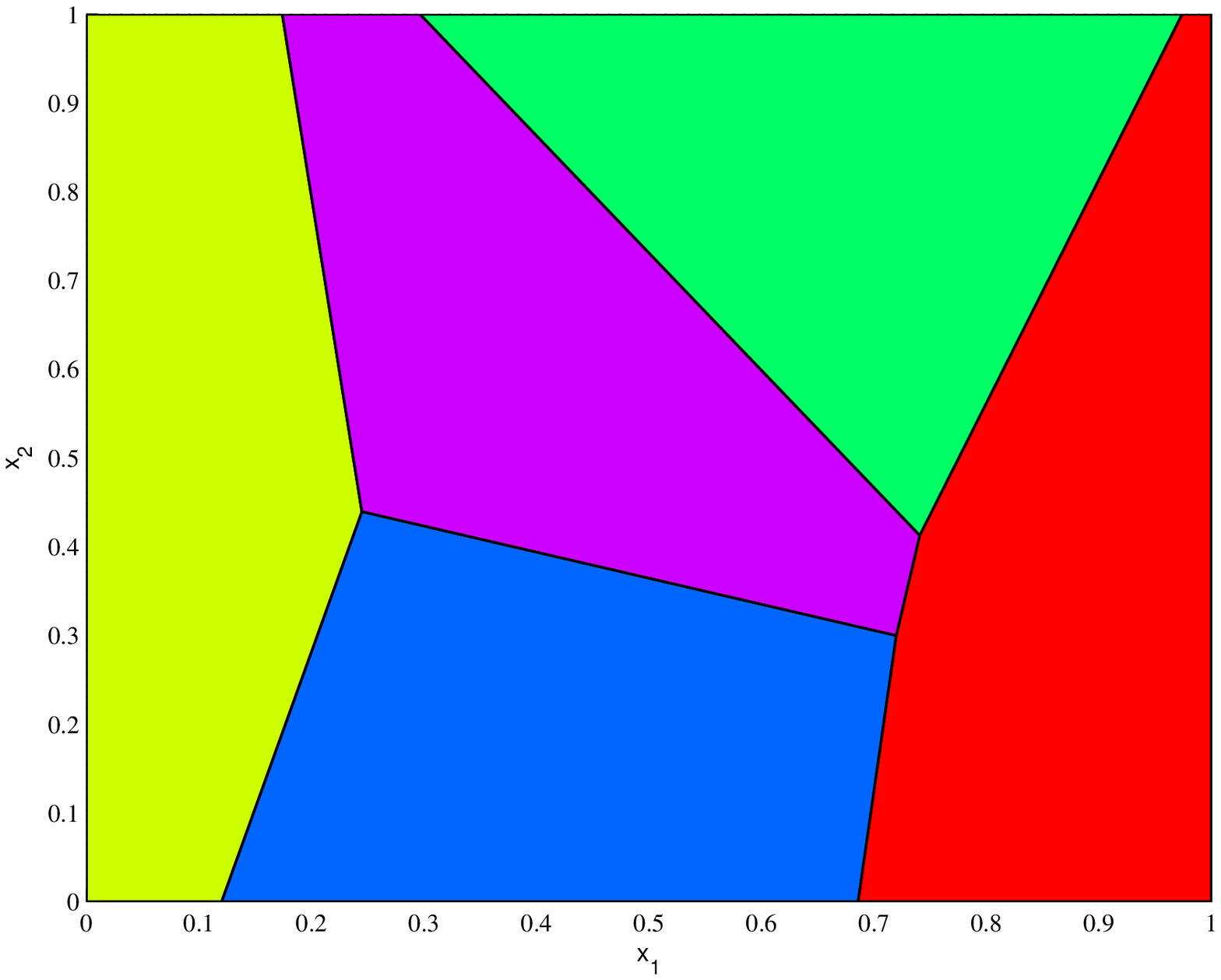,width=1.5in,height=1.5in} %
\epsfig{figure=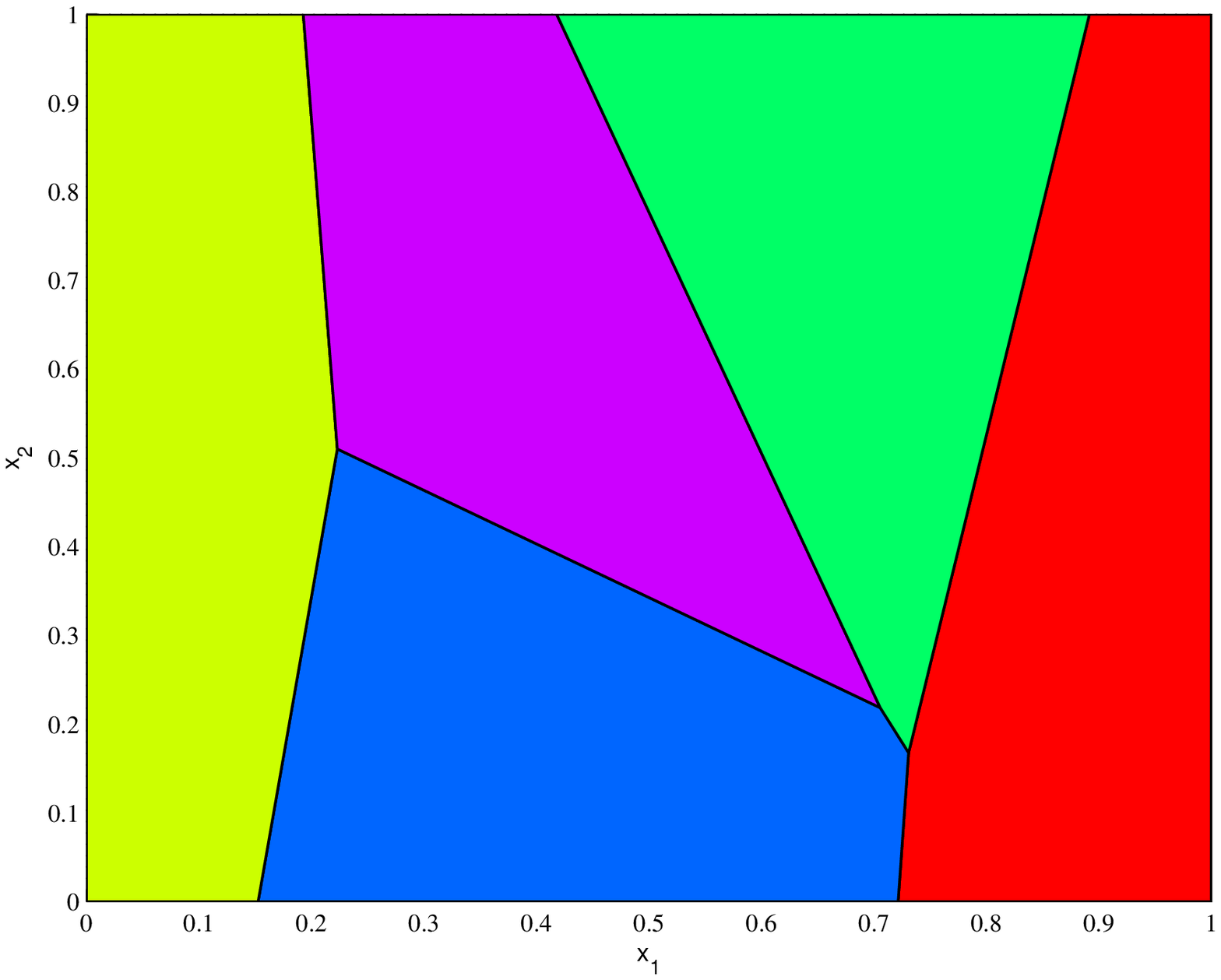,width=1.5in,height=1.5in} %
\epsfig{figure=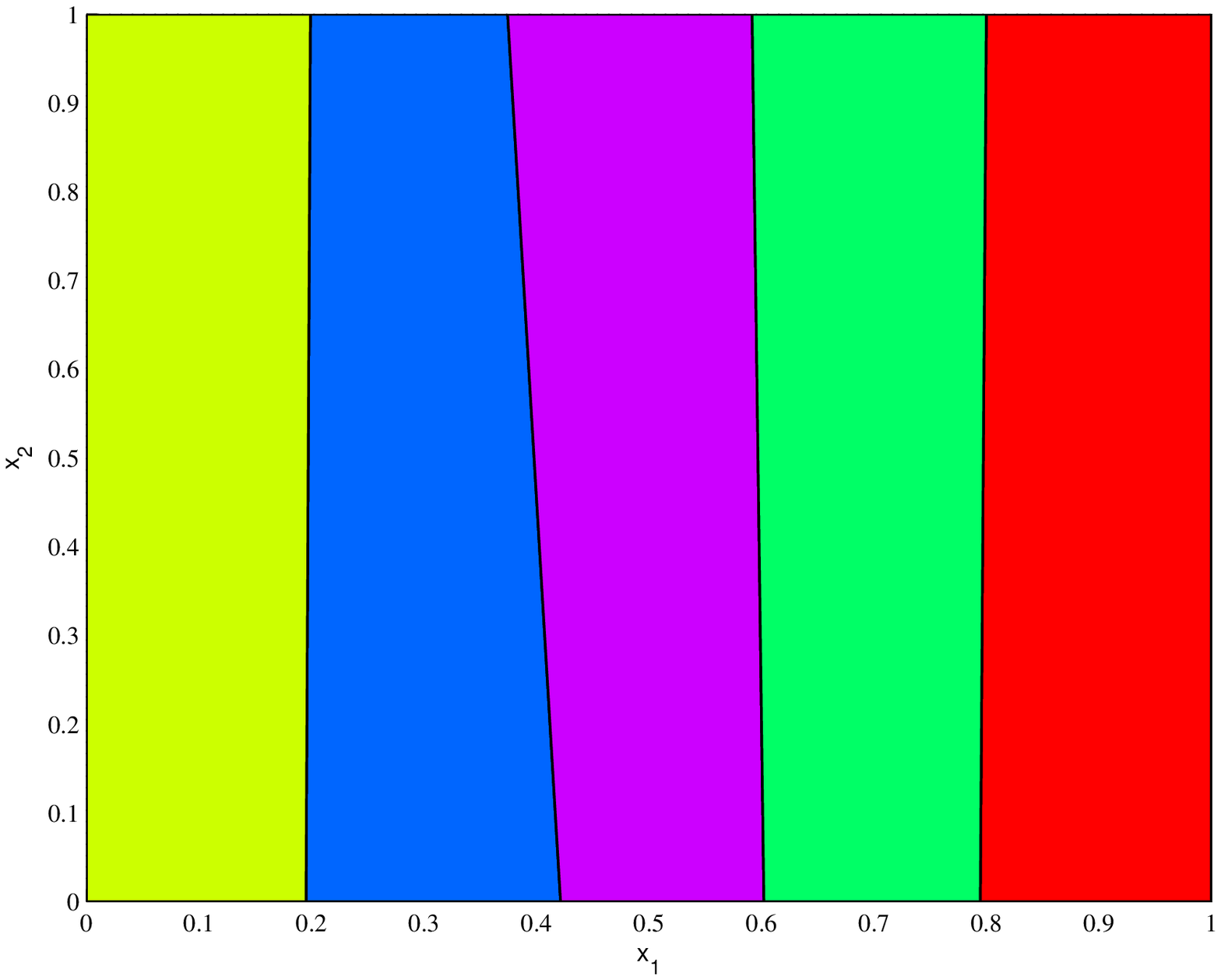,width=1.5in,height=1.5in}
\end{center}
\caption{Five sample points: evolution of the tesselation for $\protect%
\varepsilon=0$ to $\protect\varepsilon=+\infty$ (from top left to bottom
right).}
\label{Figure:Sample5evol}
\end{figure}

Running the continuation algorithm with 100  and 500 discretization steps  we obtained  partition sketched below, where the relative error on the cells area is inferior to 5\%. The evolution
of the componentwise correlations from $\varepsilon =0$ until $\varepsilon =1
$ are also sketched and compared with their exact conterparts (the above
curve). Running the algorithm with 500 discretization steps on a standard laptop took 349
seconds, and yields to the results below, where the maximal relative error
on the cells area is 1\%. The evolution of the componentwise correlations from $%
\varepsilon =0$ until $\varepsilon =1$ are also sketched and compared with
their exact counterparts.

\textbf{Ten sample points.} Taking a sample of ten
points, we obtain the following errors:
$$
\begin{tabular}{||l|l|l|l|l|l|l||}
\hline\hline
\textbf{\# steps} & \multicolumn{5}{|l|}{\textbf{Relative errors in cell
areas}} & \textbf{Time} \\ \hline
2,000 & -1.37\% & -12\% & -2.44\% & -0.09\% & 2.27\% & $\leq$ 1 h  \\ \hline
3,000 & -0.92\% & -8.23\% & 1.63\% & 0.06\% & 1.52\% & $\leq$ 1,5 h \\ \hline
& \multicolumn{6}{|l||}{\textbf{(continued)}} \\ \hline
2,000 & 0.27\% & 5.71\% & 2.12\% & 7.36\% & -1.58\% & $\leq$ 1 h \\ \hline
3,000 & 0.18\% & 3.84\% & 1.42\% & 4.94\% & -1.05\% & $\leq$ 1,5 h \\ \hline\hline
\end{tabular}
$$

We draw in Figure \ref{Figure:Sample10} the partition obtained for $%
\varepsilon =1$ using an exact method, as well as the true evolutions of the
componentwise correlations of the $x$'s and the $y$'s from $\varepsilon =0$
until $\varepsilon =1$.

\begin{figure}[h]
\begin{center}
\epsfig{figure=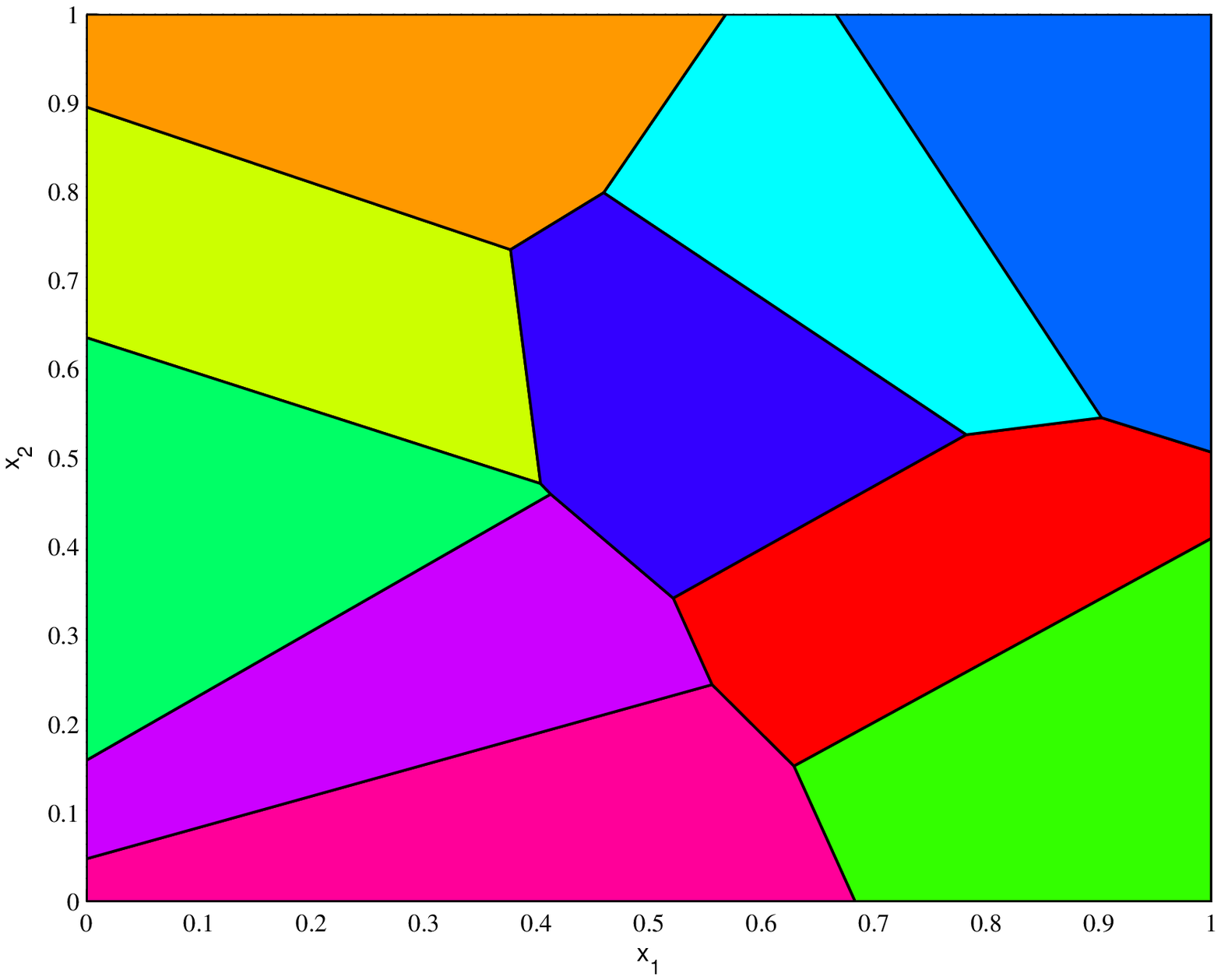, width=2.5in,height=2.5in} %
\epsfig{figure=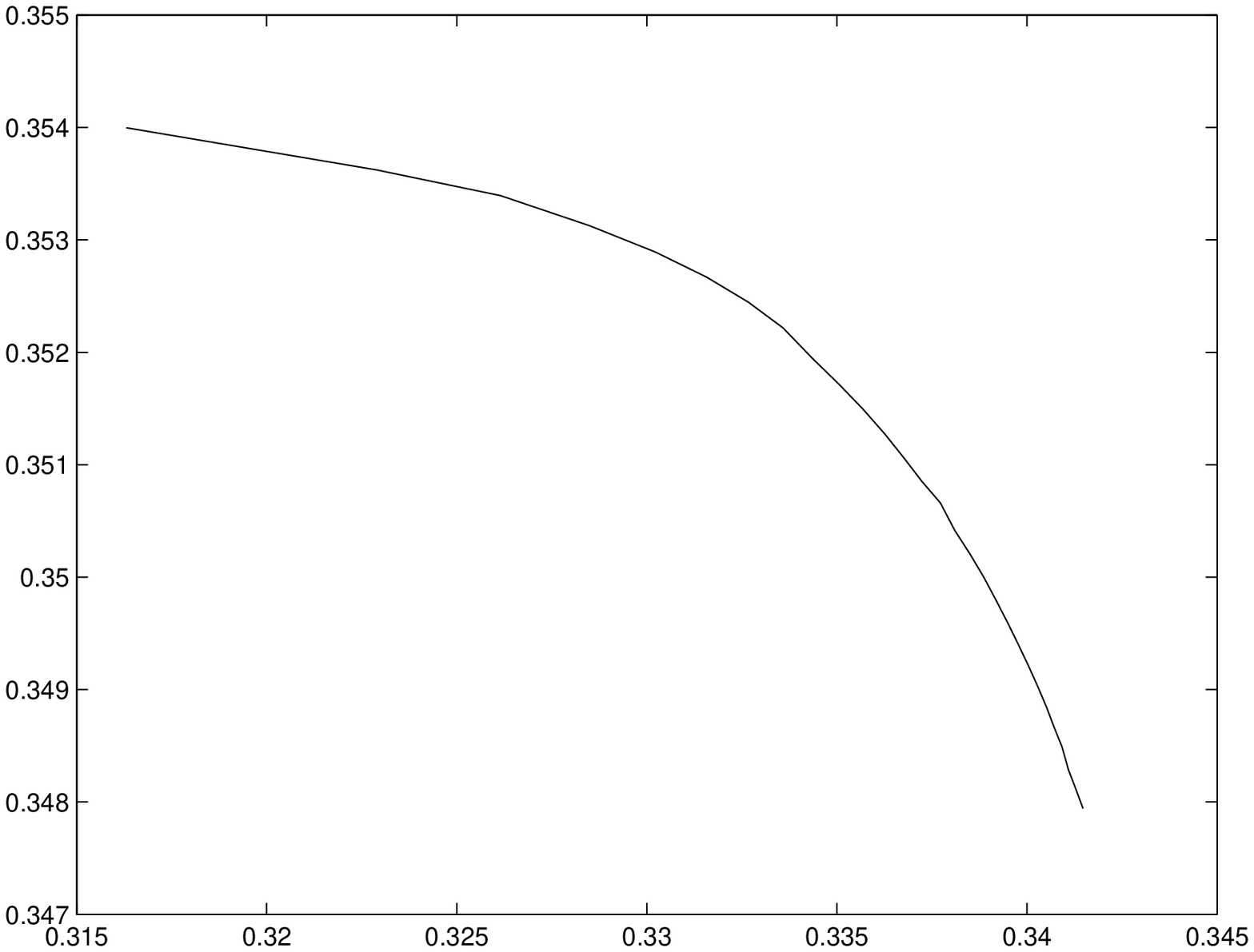,width=2.5in,height=2.5in} %
\epsfig{figure=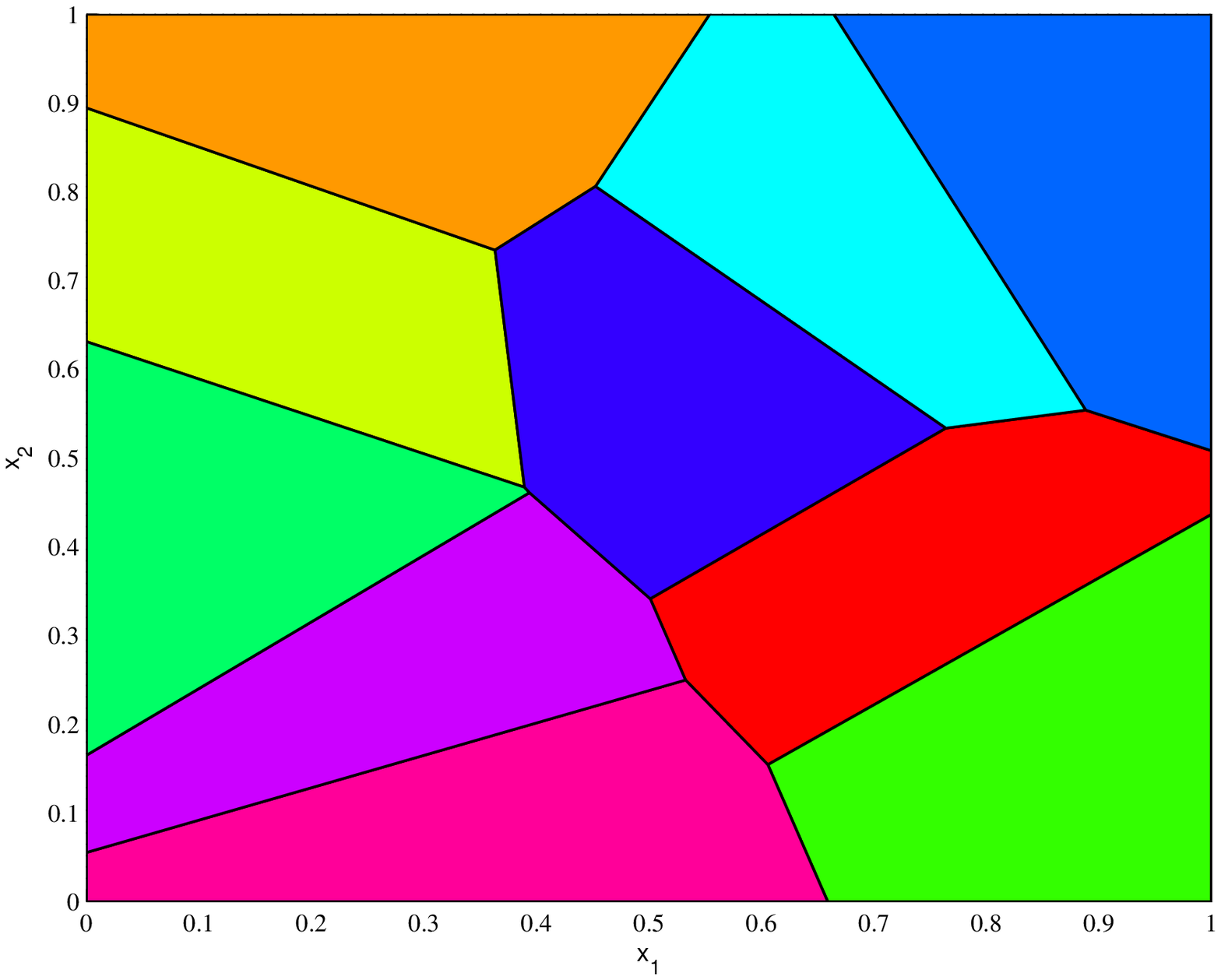,width=2.5in,height=2.5in} %
\epsfig{figure=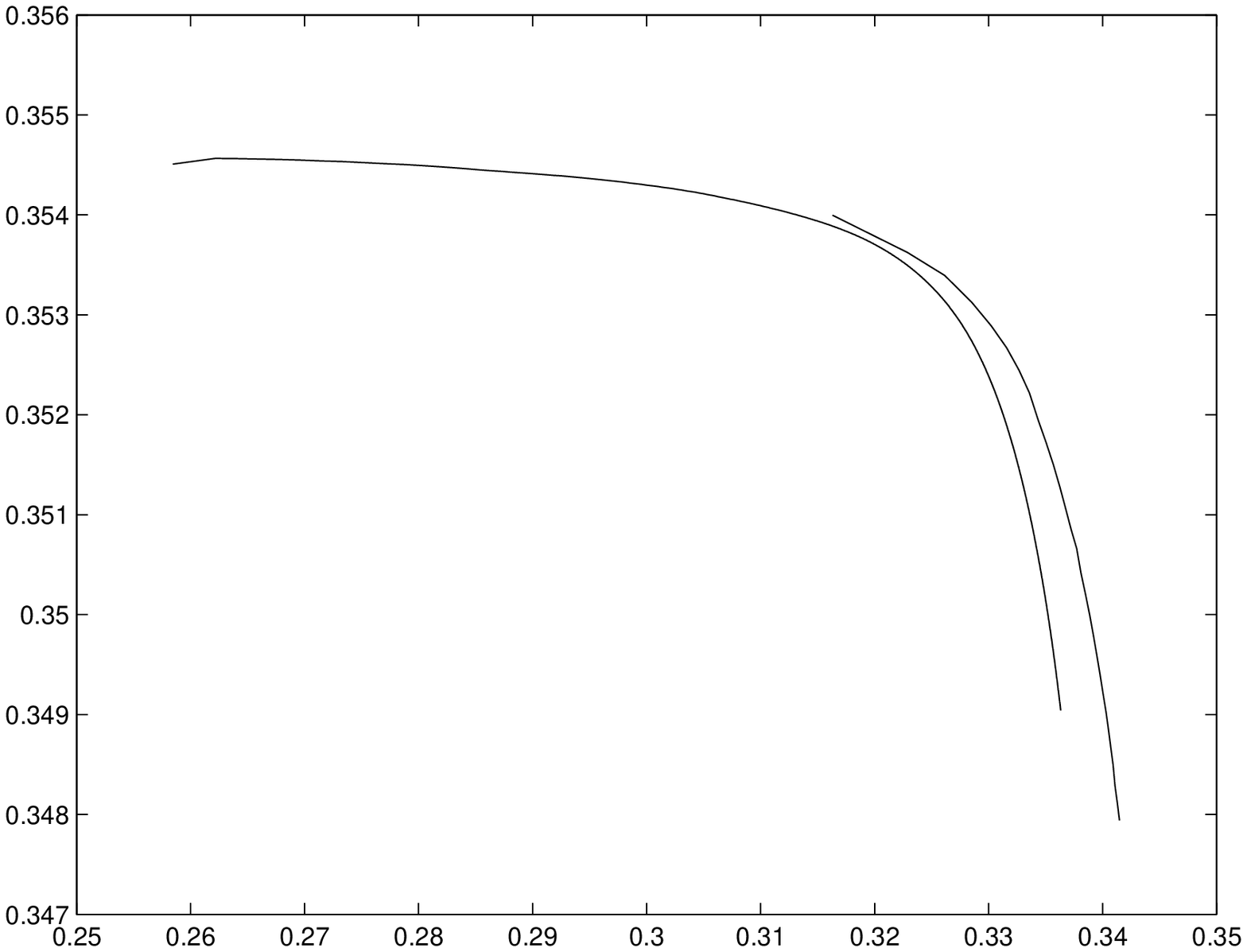, width=2.5in,height=2.5in} %
\epsfig{figure=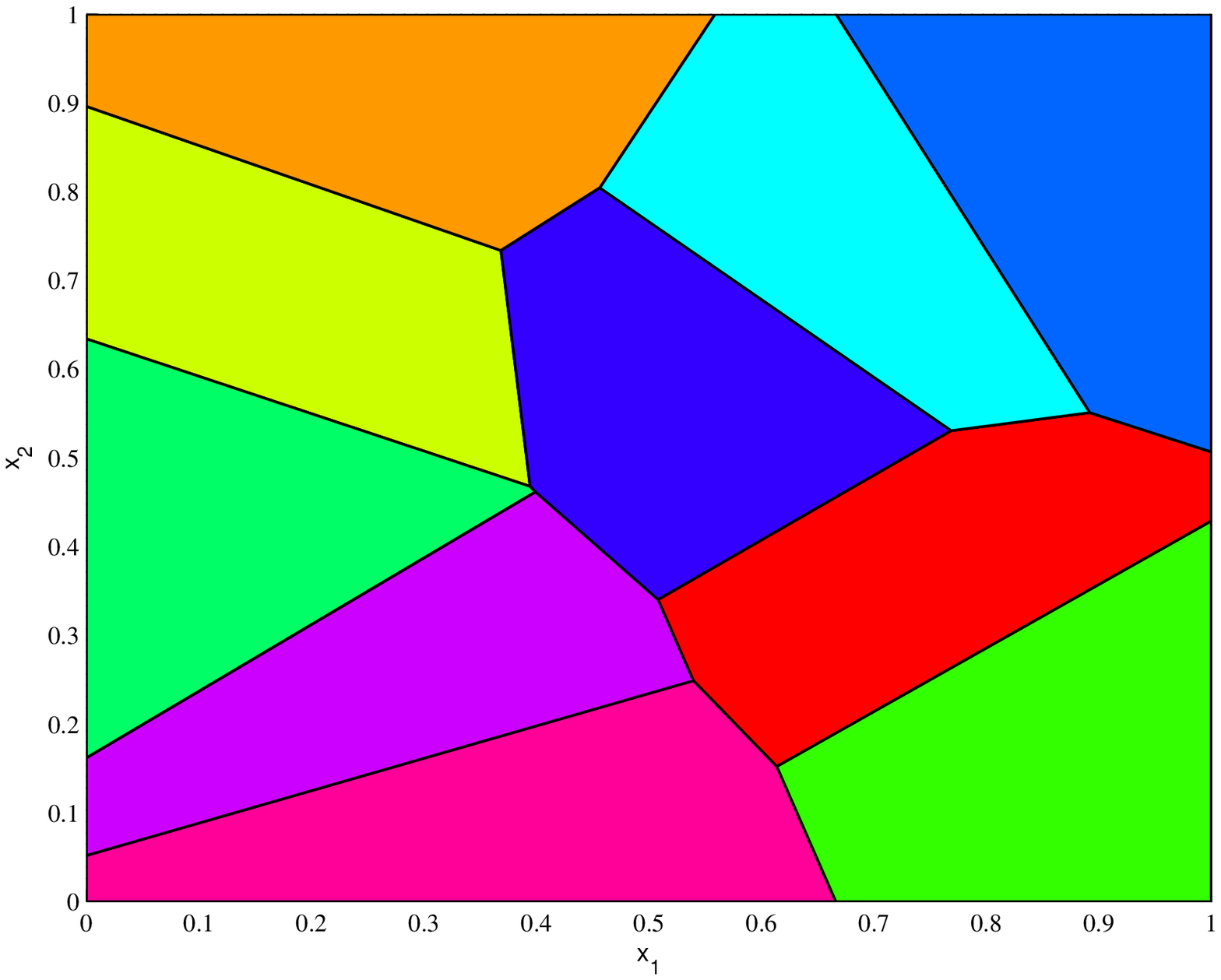,width=2.5in,height=2.5in} %
\epsfig{figure=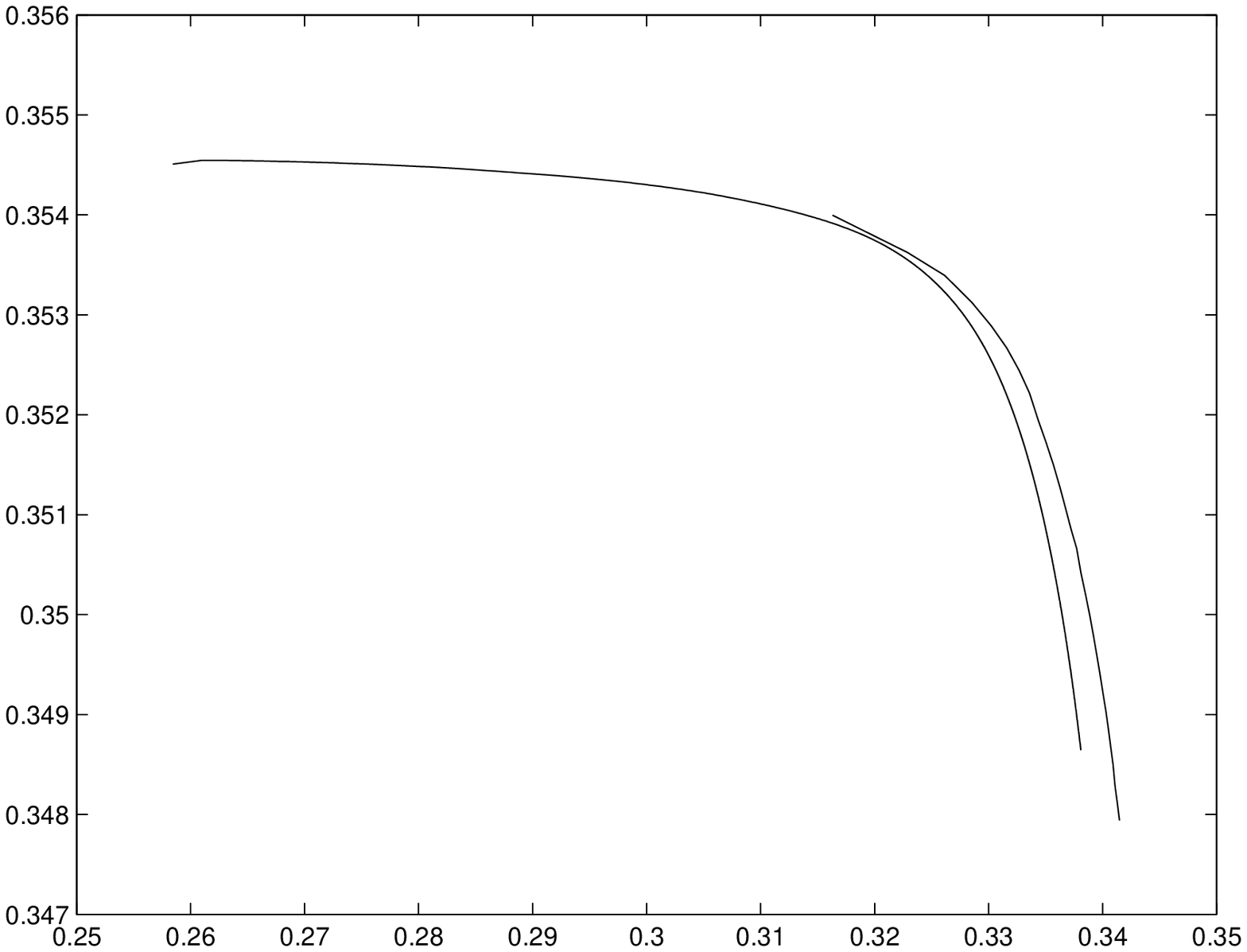, width=2.5in,height=2.5in}
\end{center}
\caption{Ten sample points. Top row: exact algorithm (gradient method).
Middle row: continuation algorithm, 2000 steps. Bottom row: continuation
algorithm, 3000 steps.}
\label{Figure:Sample10}
\end{figure}

\begin{figure}[h]
\begin{center}
\epsfig{figure=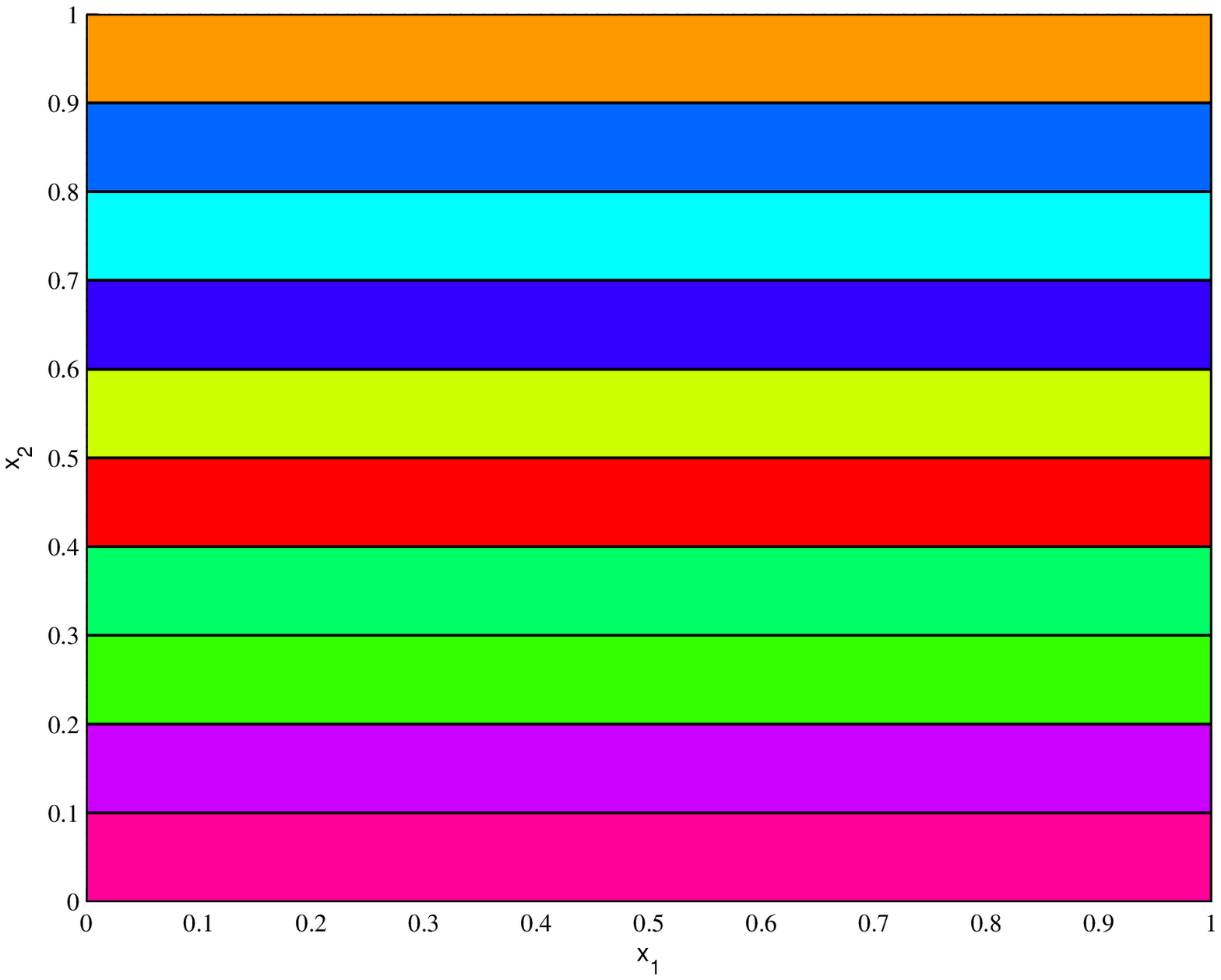, width=1.5in,height=1.5in} %
\epsfig{figure=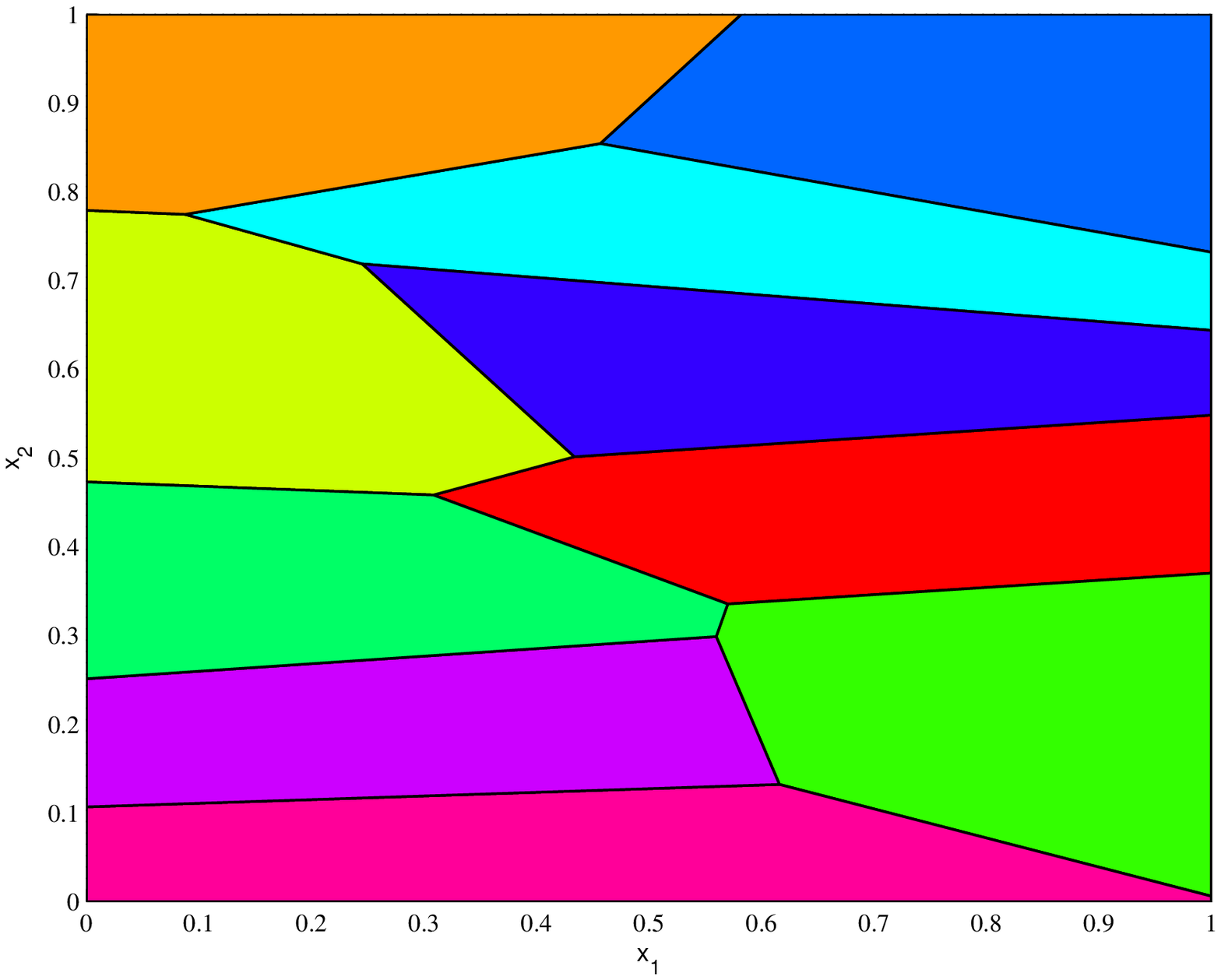, width=1.5in,height=1.5in} %
\epsfig{figure=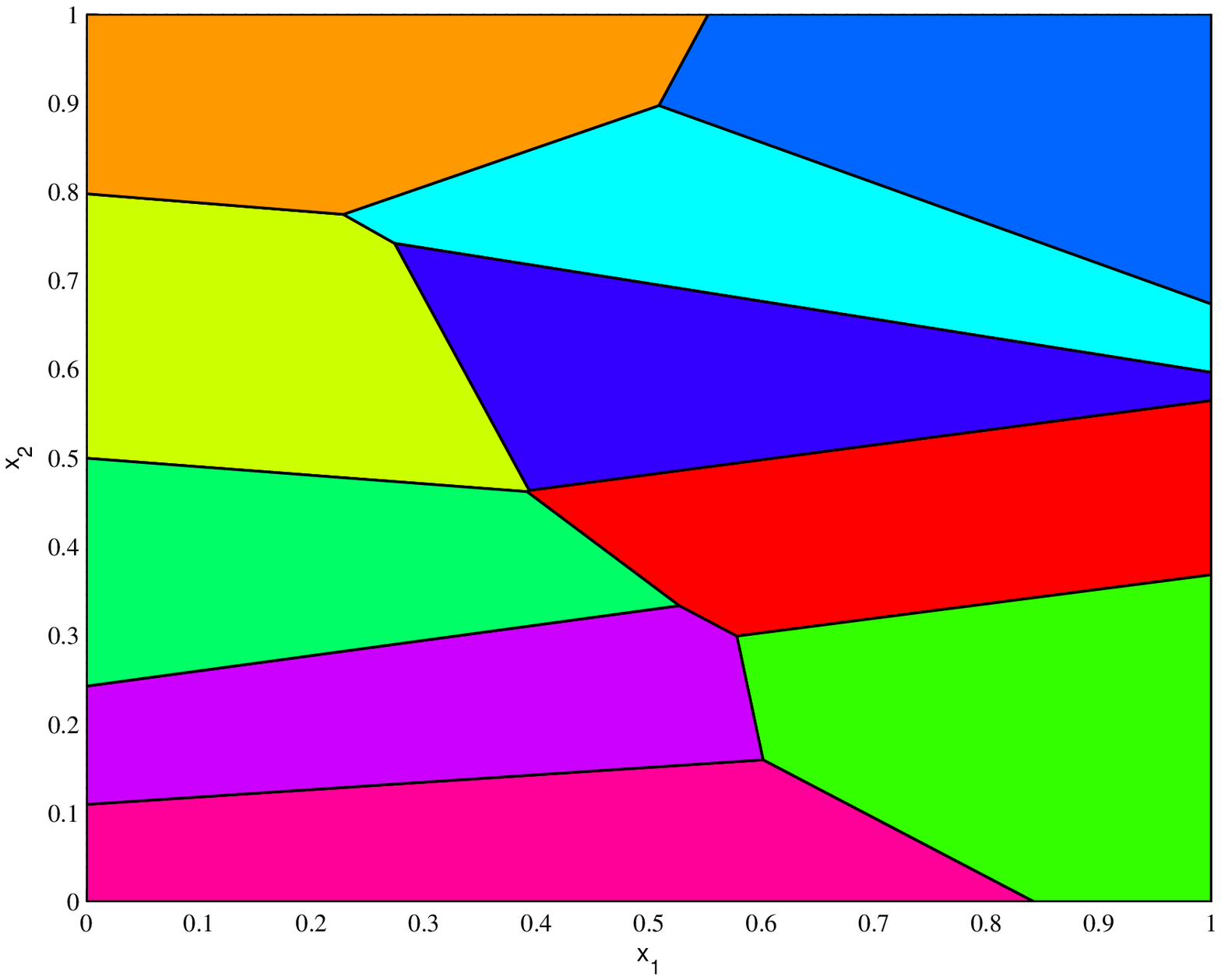,width=1.5in,height=1.5in} %
\epsfig{figure=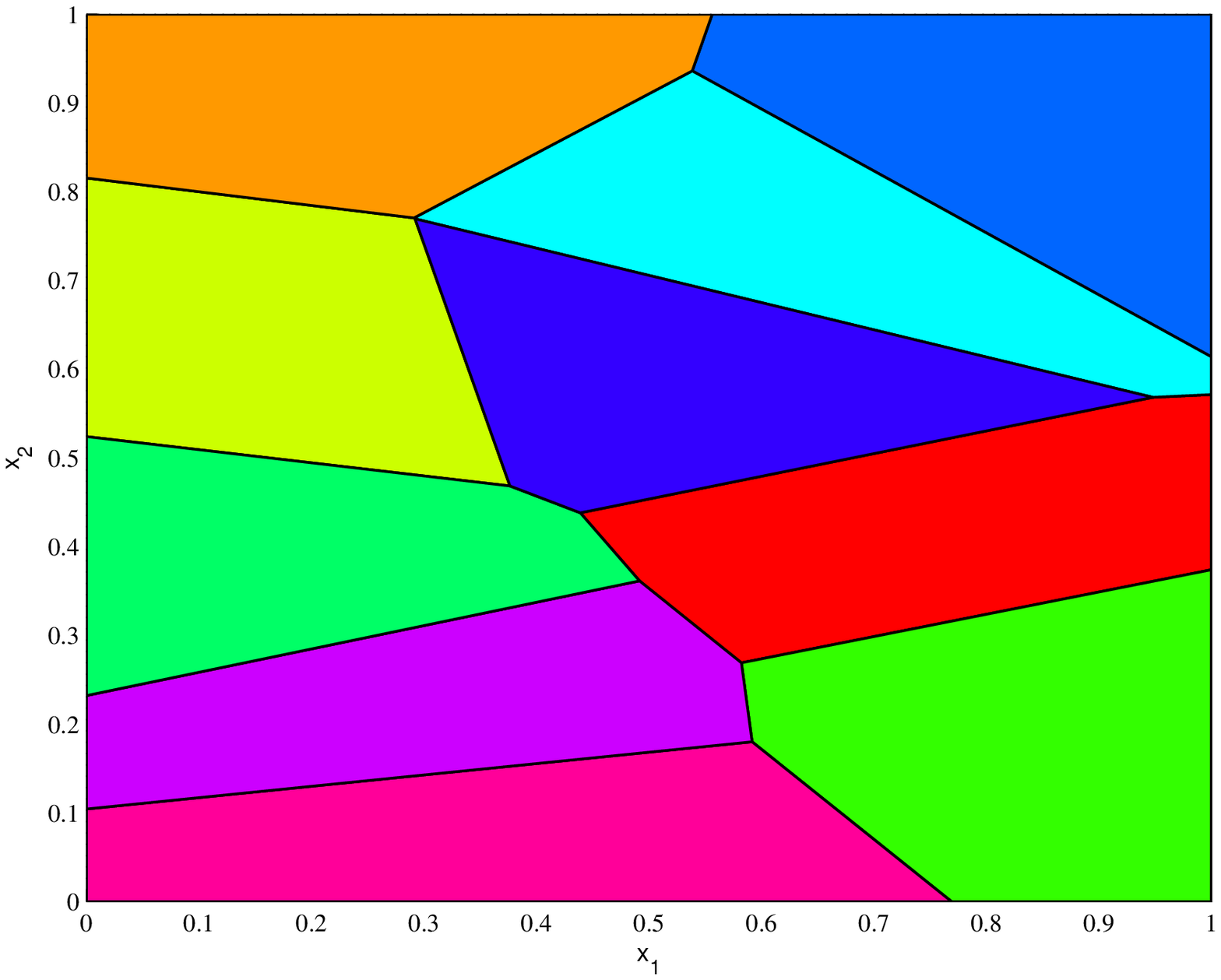,width=1.5in,height=1.5in} %
\epsfig{figure=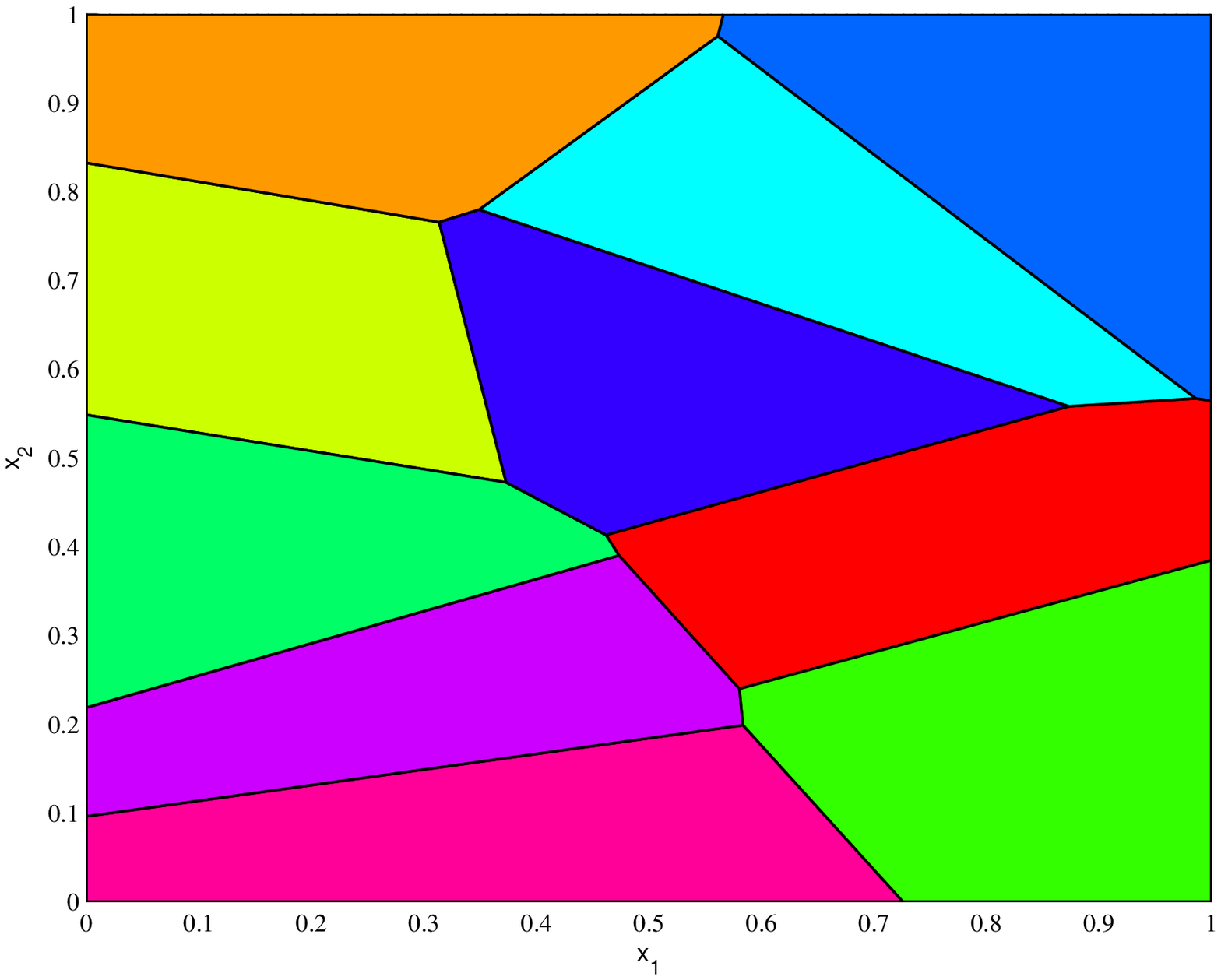,width=1.5in,height=1.5in} %
\epsfig{figure=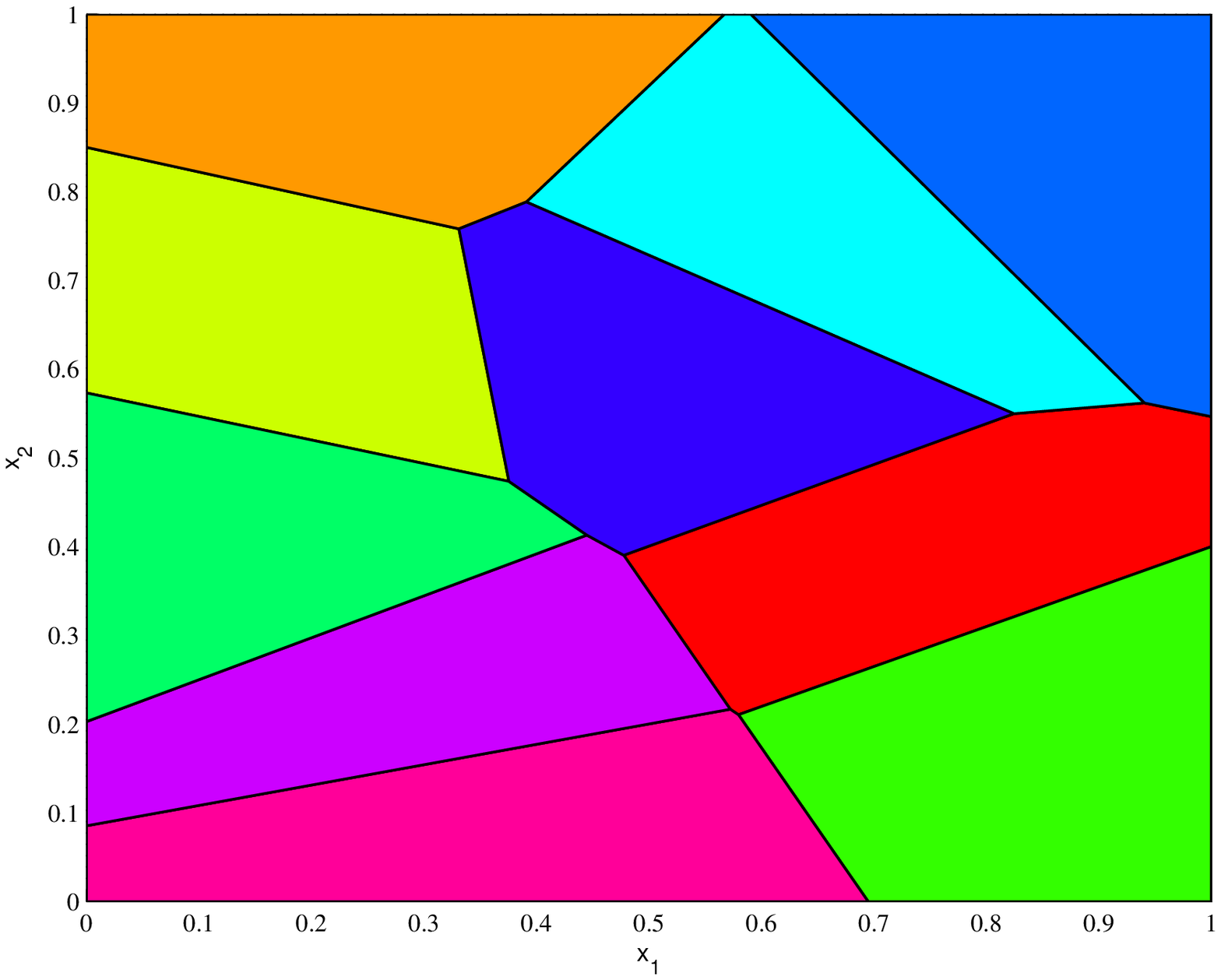,width=1.5in,height=1.5in} %
\epsfig{figure=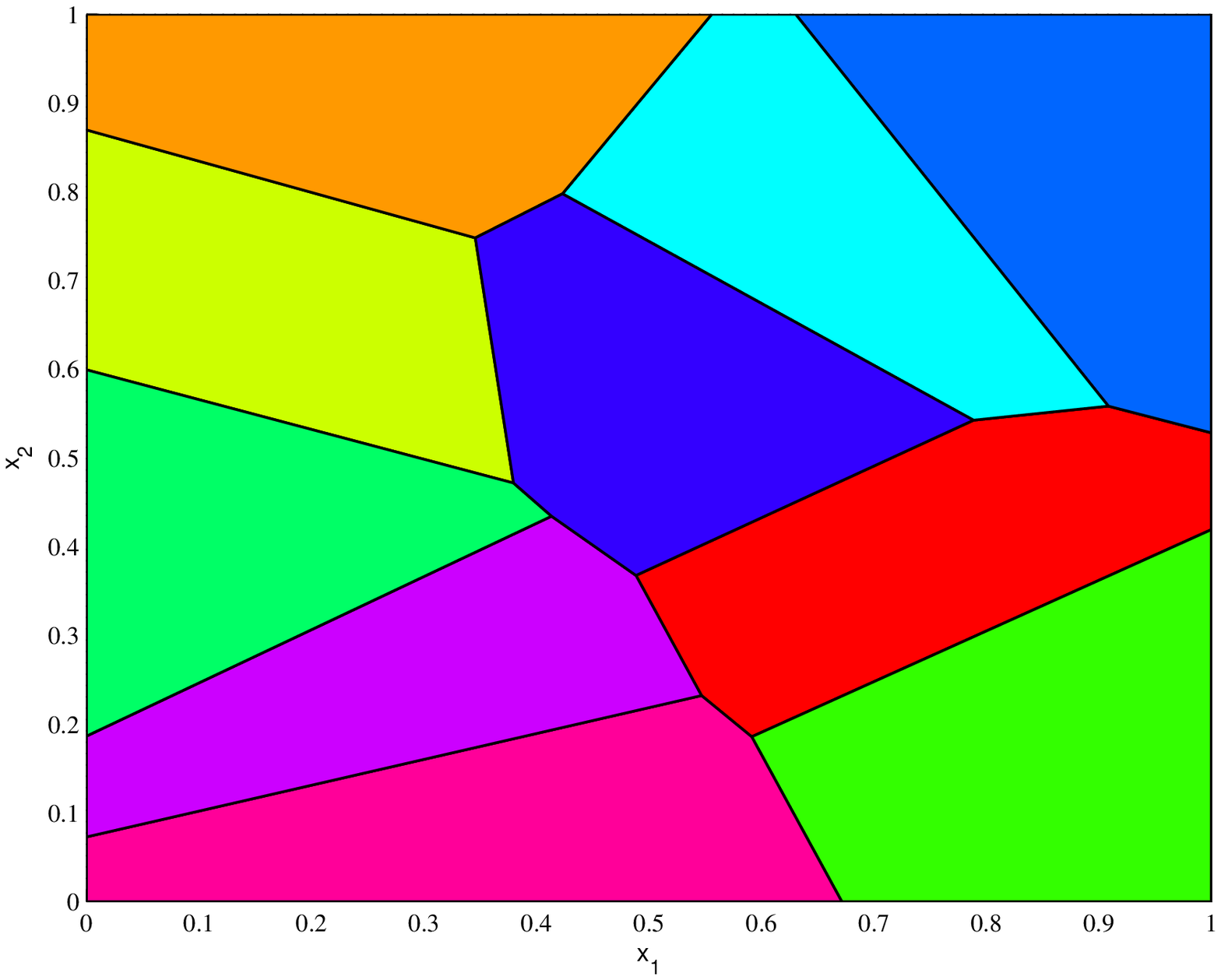,width=1.5in,height=1.5in} %
\epsfig{figure=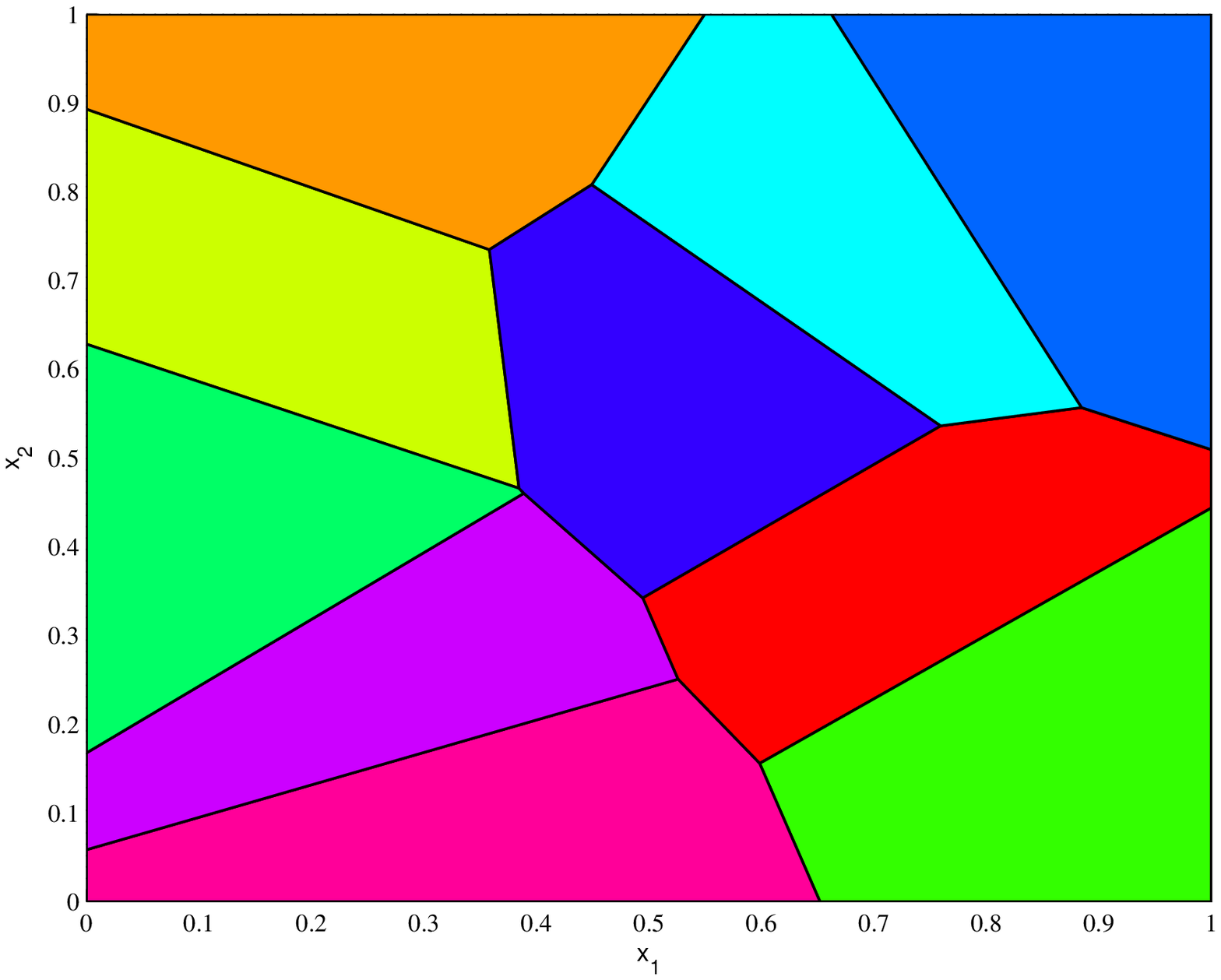,width=1.5in,height=1.5in} %
\epsfig{figure=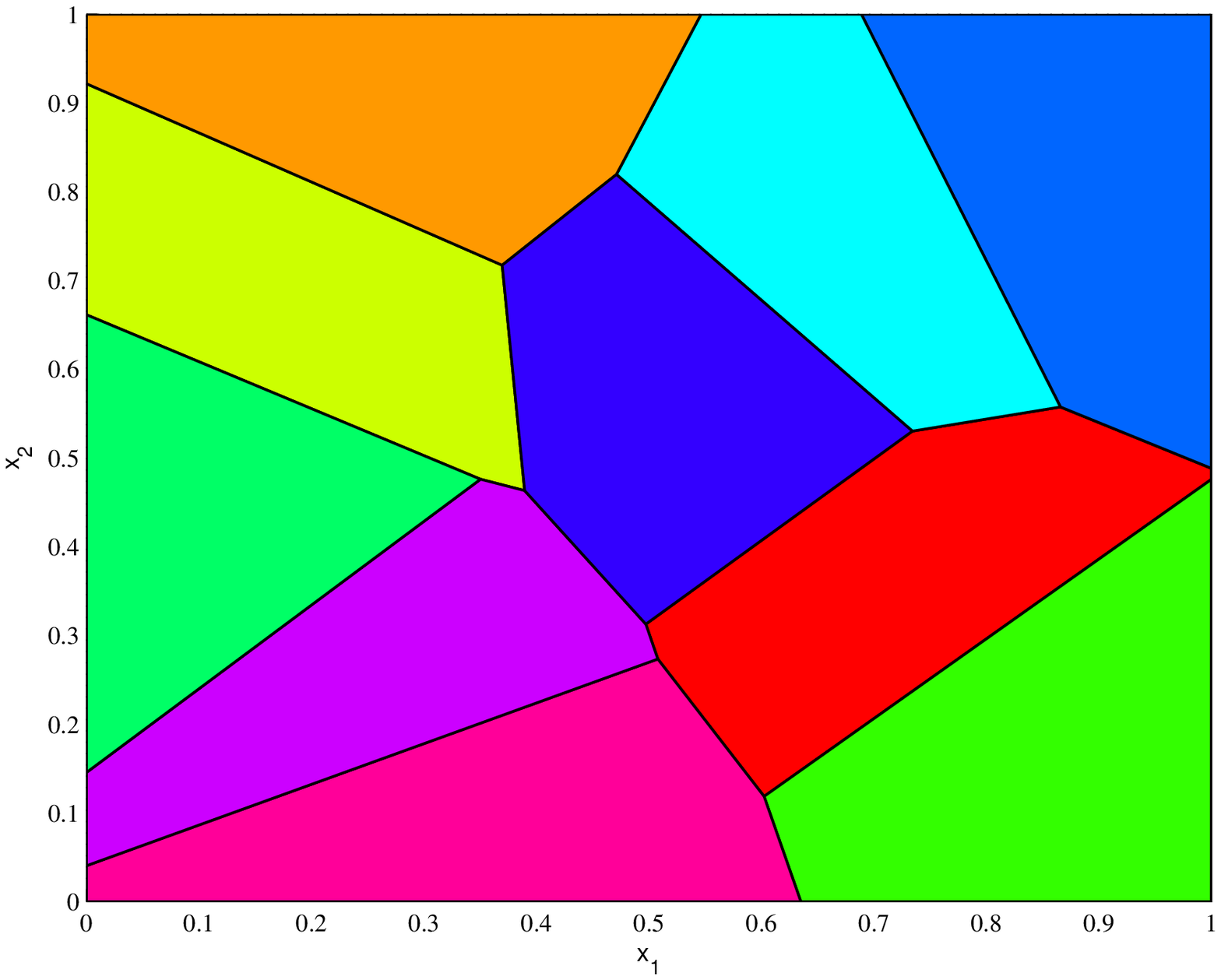,width=1.5in,height=1.5in} %
\epsfig{figure=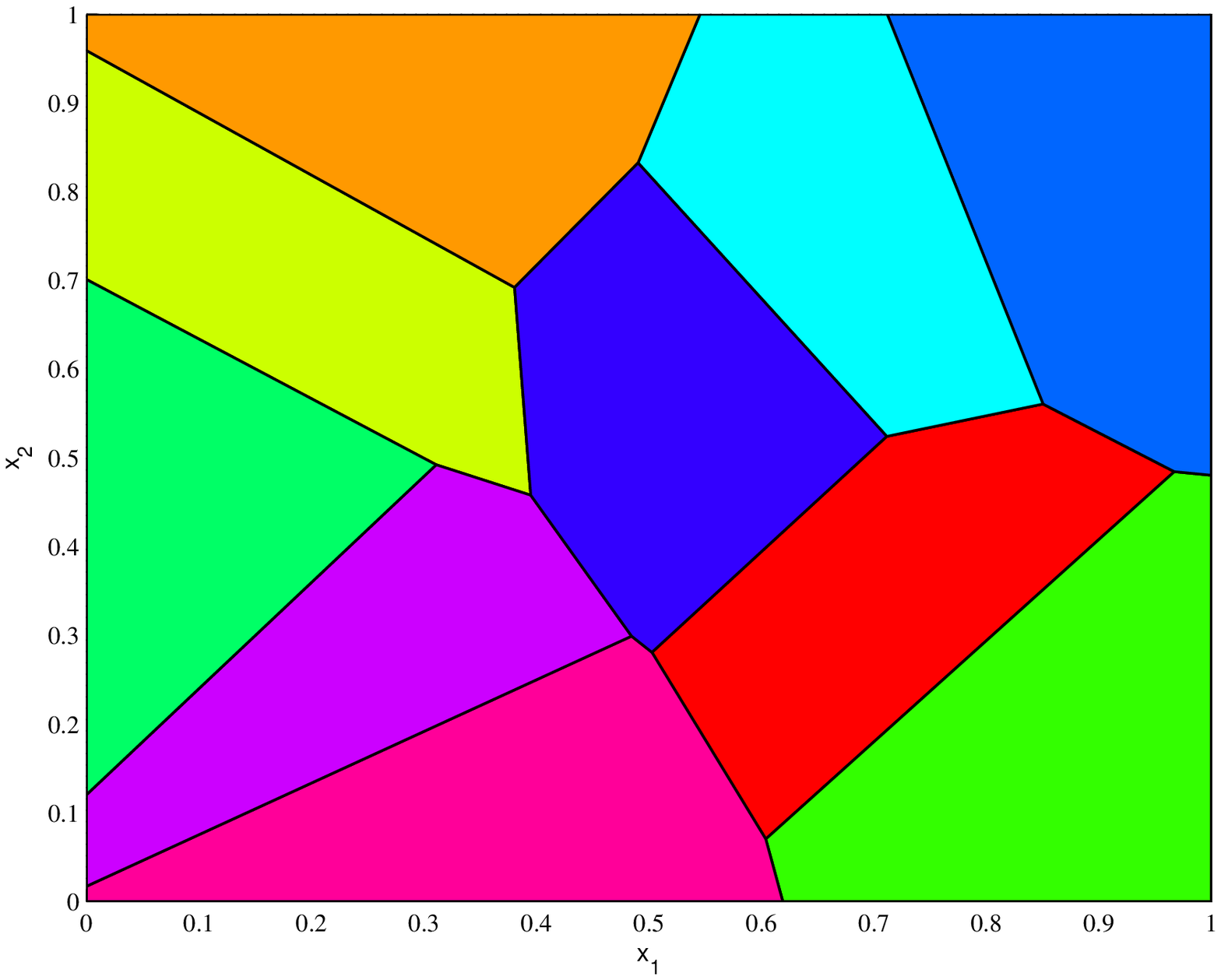,width=1.5in,height=1.5in} %
\epsfig{figure=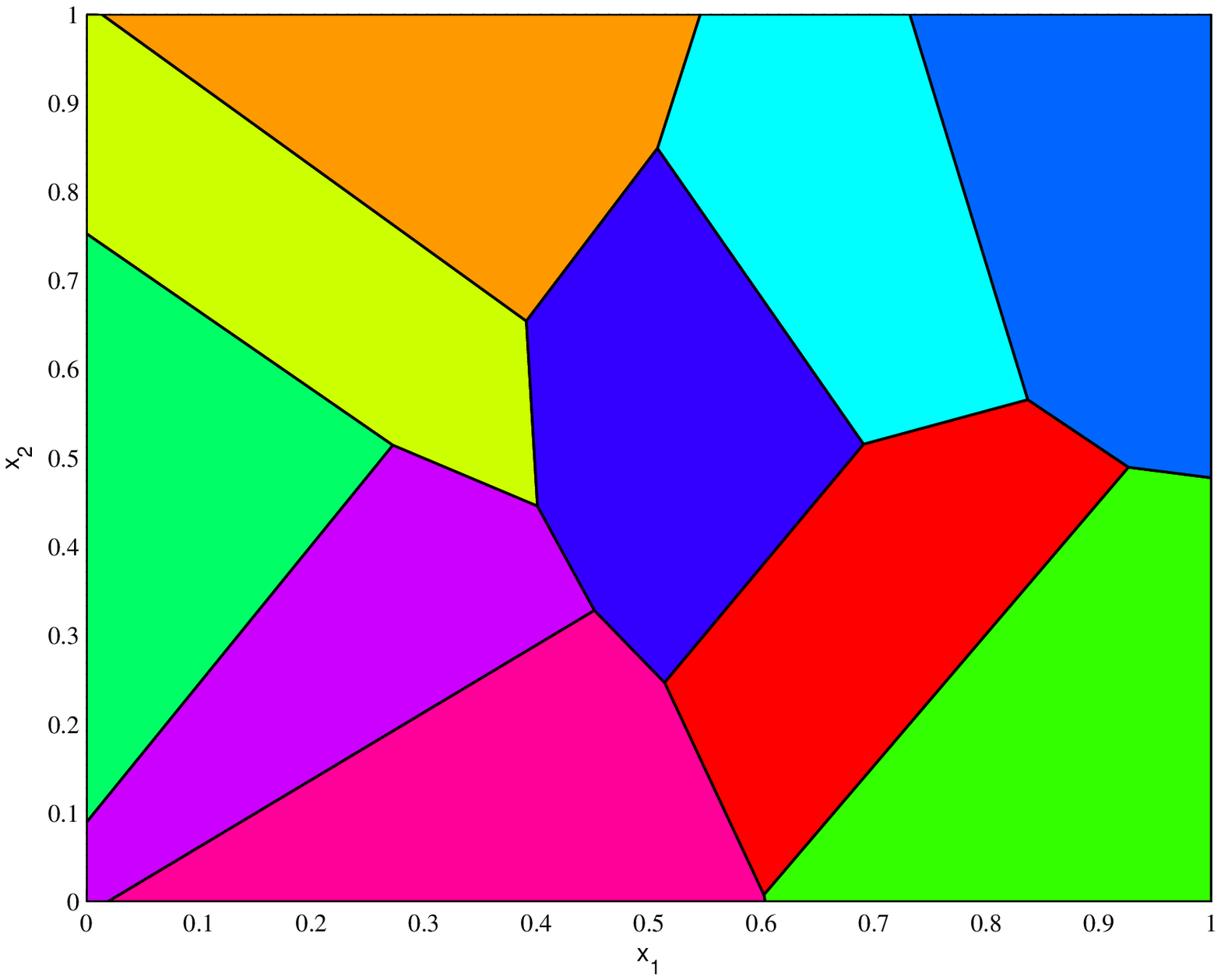,width=1.5in,height=1.5in} %
\epsfig{figure=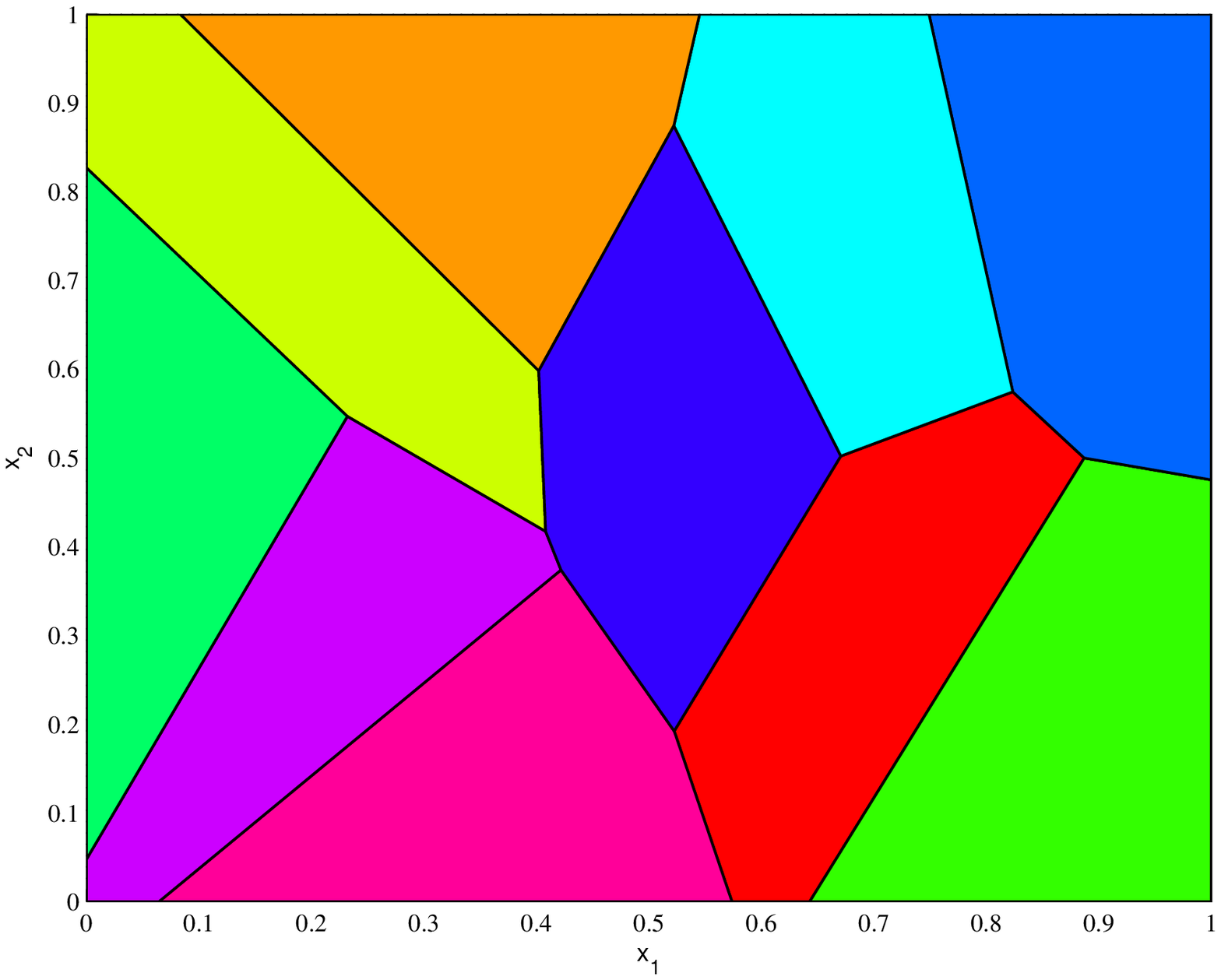,width=1.5in,height=1.5in} %
\epsfig{figure=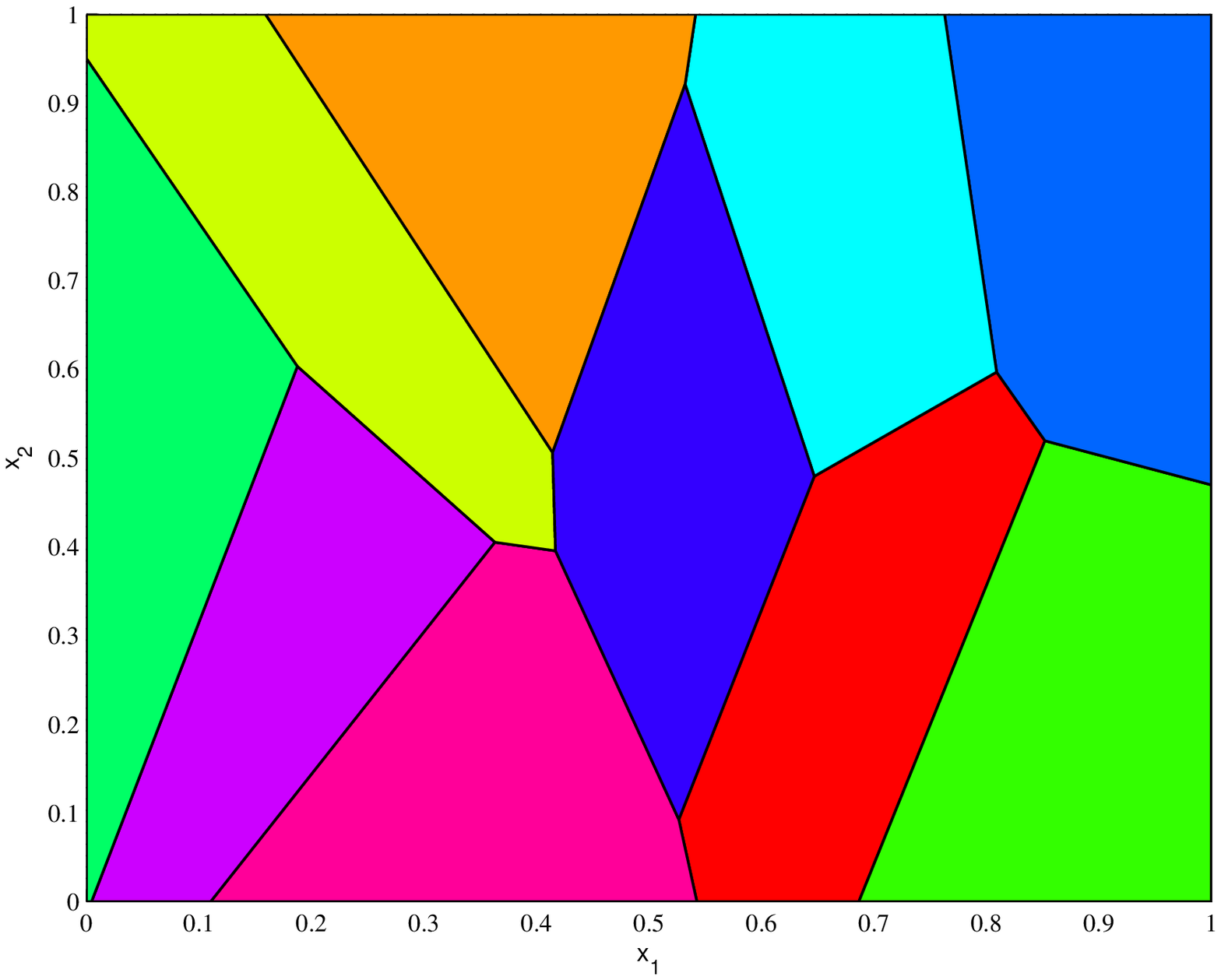,width=1.5in,height=1.5in} %
\epsfig{figure=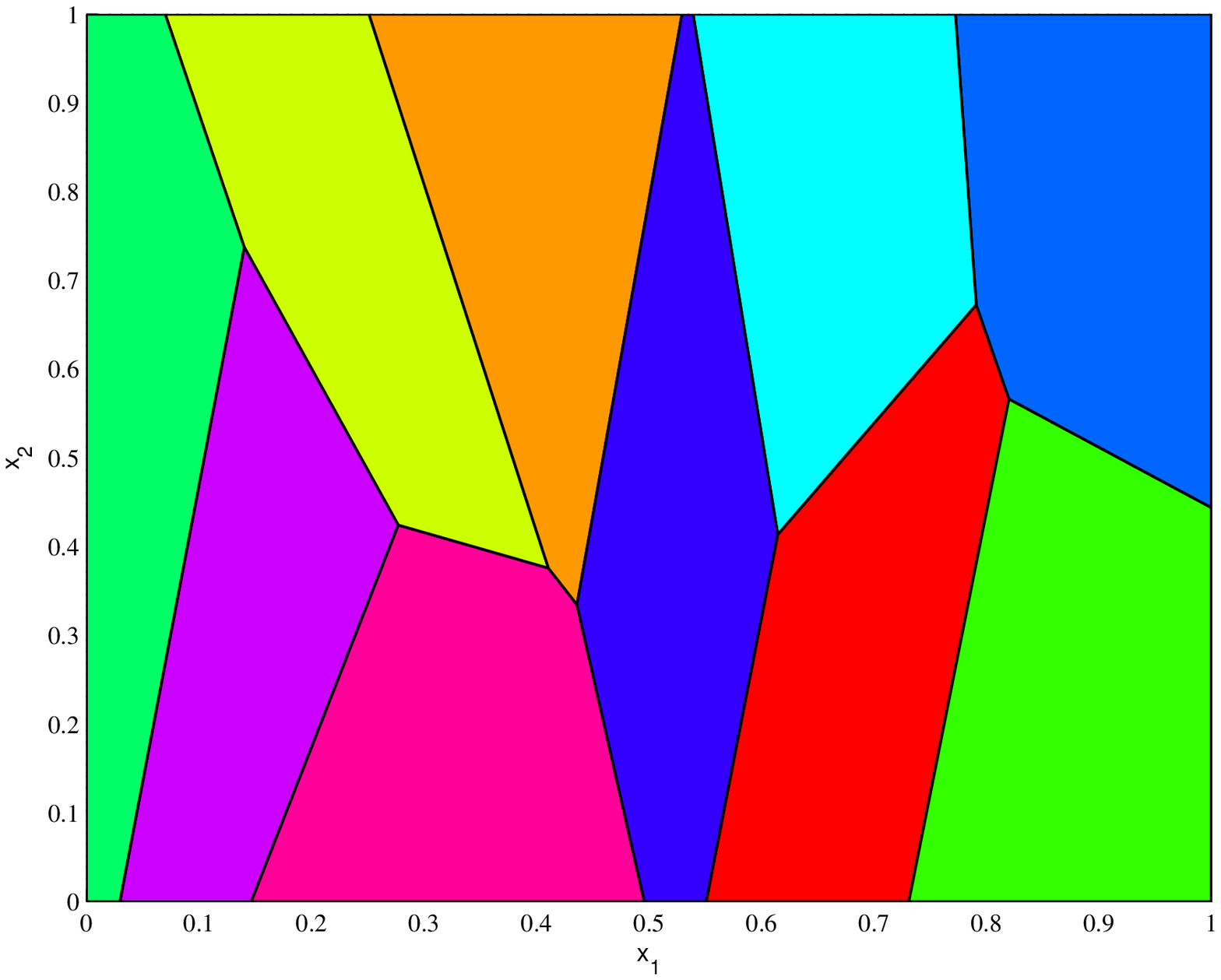,width=1.5in,height=1.5in} %
\epsfig{figure=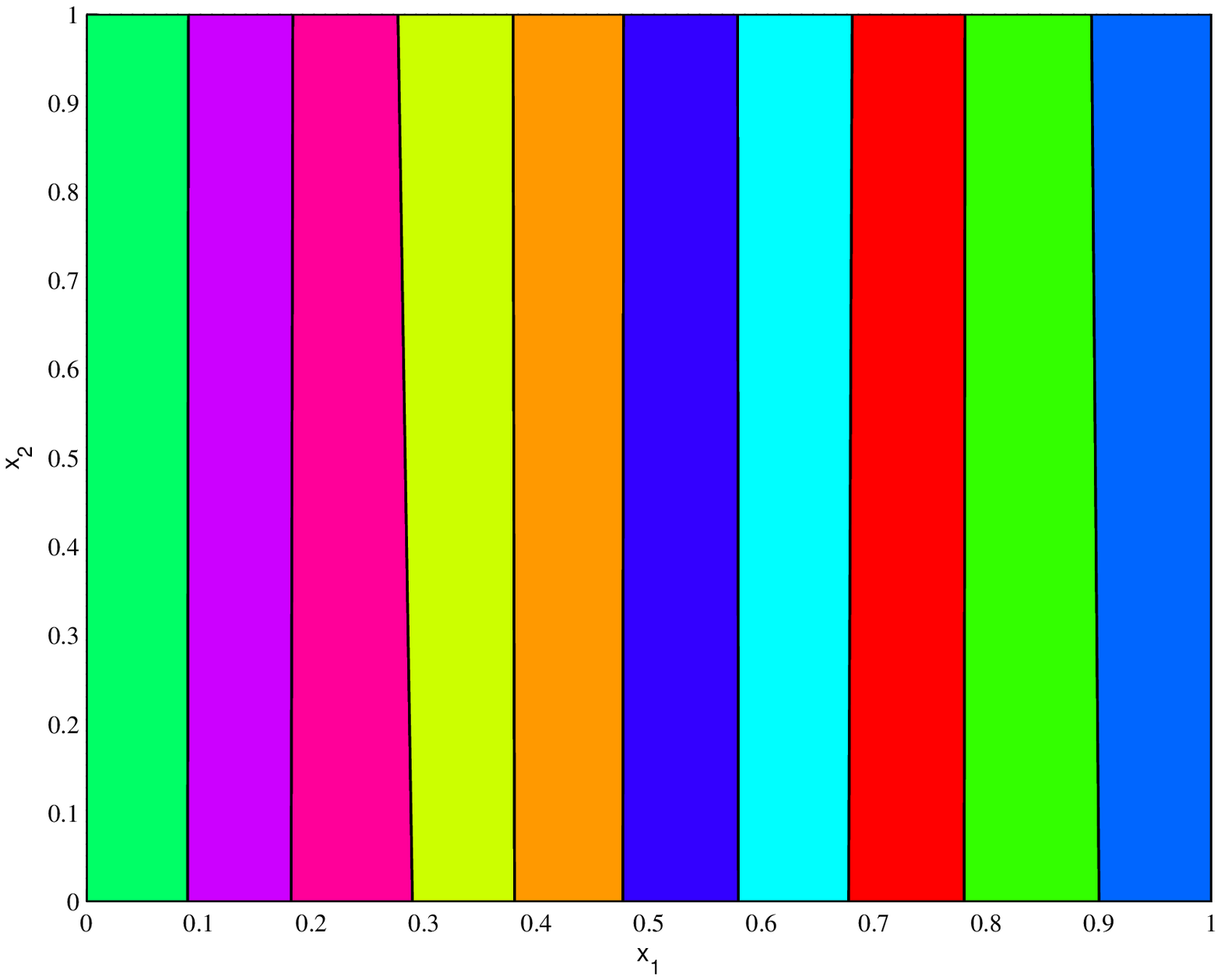,width=1.5in,height=1.5in}
\end{center}
\caption{Ten sample points: evolution of the tesselation for $\protect%
\varepsilon=0$ to $\protect\varepsilon=+\infty$ (from top left to bottom
right).}
\label{Figure:Sample10evol}
\end{figure}

\textbf{Fifteen sample points.} With a sample of 15
points, we get the following errors:
$$
\begin{tabular}{||l|l|l|l|l|l|l||}
\hline\hline
\textbf{\# steps} & \multicolumn{5}{|l|}{\textbf{Relative errors in cell
areas}} & \textbf{Time} \\ \hline
3,000 & -2.24\% & -0.12\% & -0.58\% & 0.20\% & -3.20\% & $\simeq$ 2 h \\ \hline
10,000 & -0.67\% & -0.04\% & -0.17\% & 0.06\% & -0.97\% & $\leq$ 4 h \\ \hline
& \multicolumn{6}{|l||}{\textbf{(continued)}} \\ \hline
3,000 & 0.17\% & -2.66\% & 4.25\% & -2.43\% & 28.85\% & $\simeq$ 2 h \\ \hline
10,000 & 0.05\% & -0.82\% & 1.28\% & -0.74\% & 9.42\% & $\leq$ 4 h \\ \hline
& \multicolumn{6}{|l||}{\textbf{(continued)}} \\ \hline
3,000 & -30.11\% & 2.51\% & 1.82\% & 2.92\% & 0.63\% & $\simeq$ 2 h \\ \hline
10,000 & -9.79\% & 0.76\% & 0.55\% & 0.89\% & 0.19\% & $\leq$ 4 h \\
\hline\hline
\end{tabular}
$$

\begin{figure}[h]
\begin{center}
\epsfig{figure=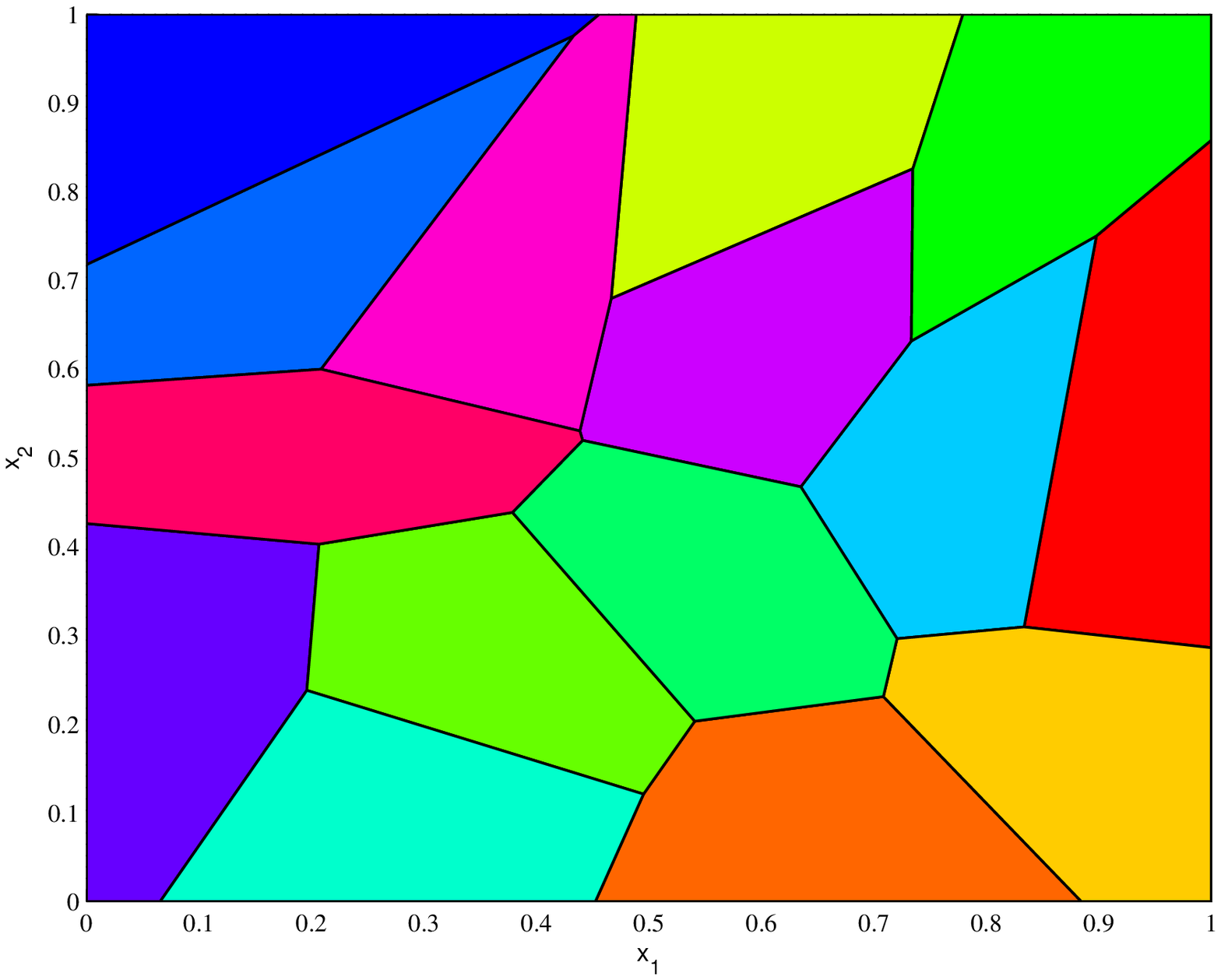, width=2.5in,height=2.5in} %
\epsfig{figure=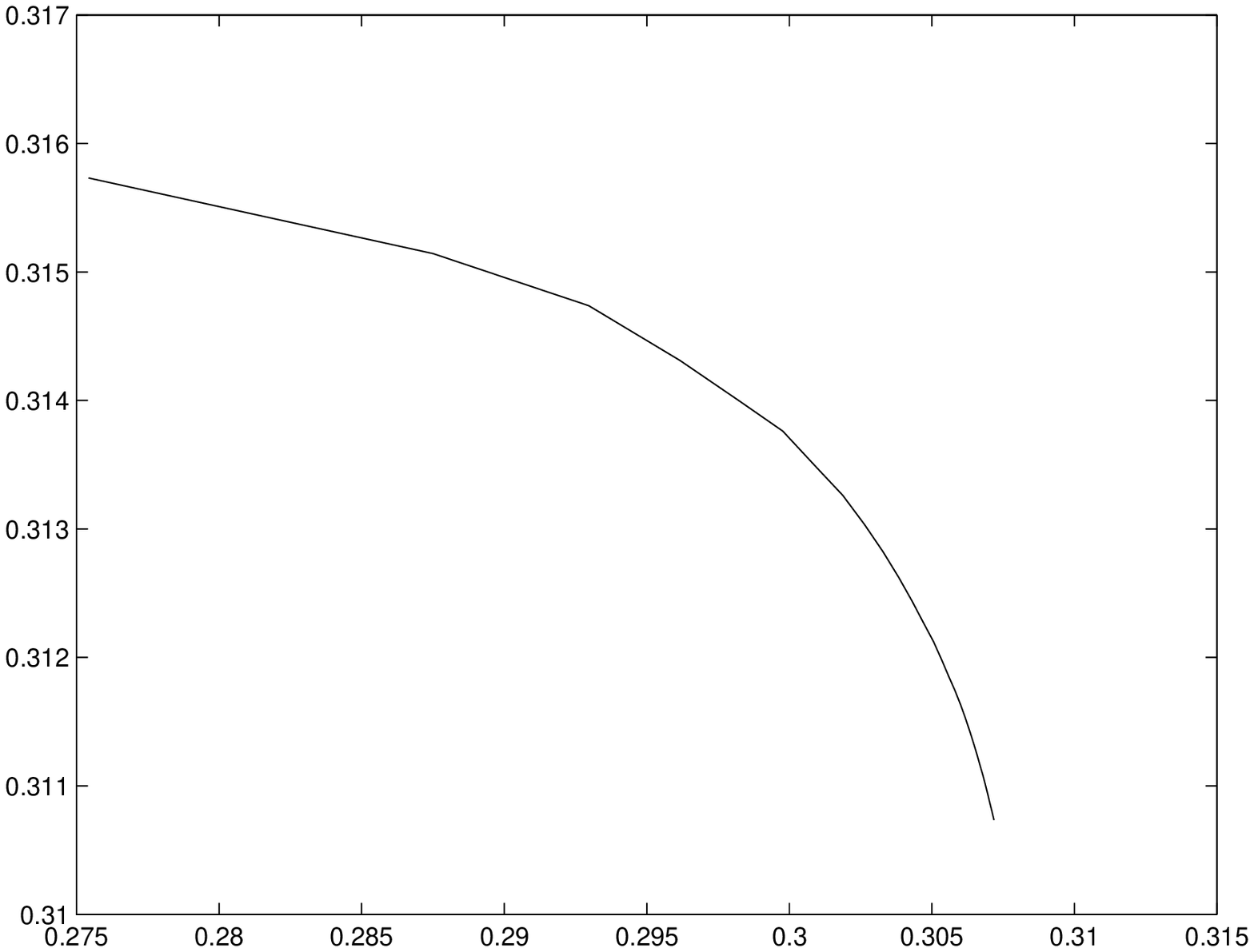,width=2.5in,height=2.5in} %
\epsfig{figure=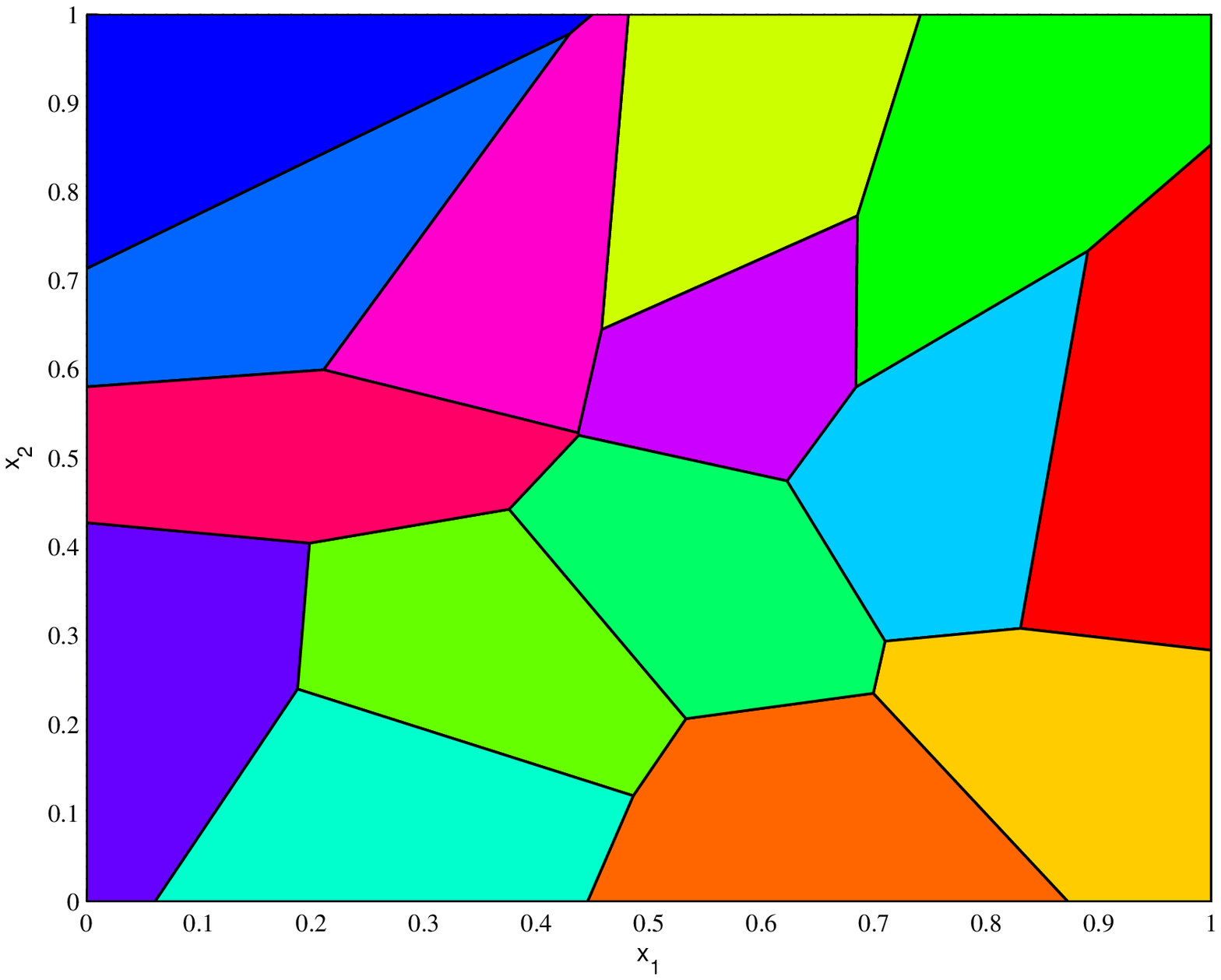,width=2.5in,height=2.5in} %
\epsfig{figure=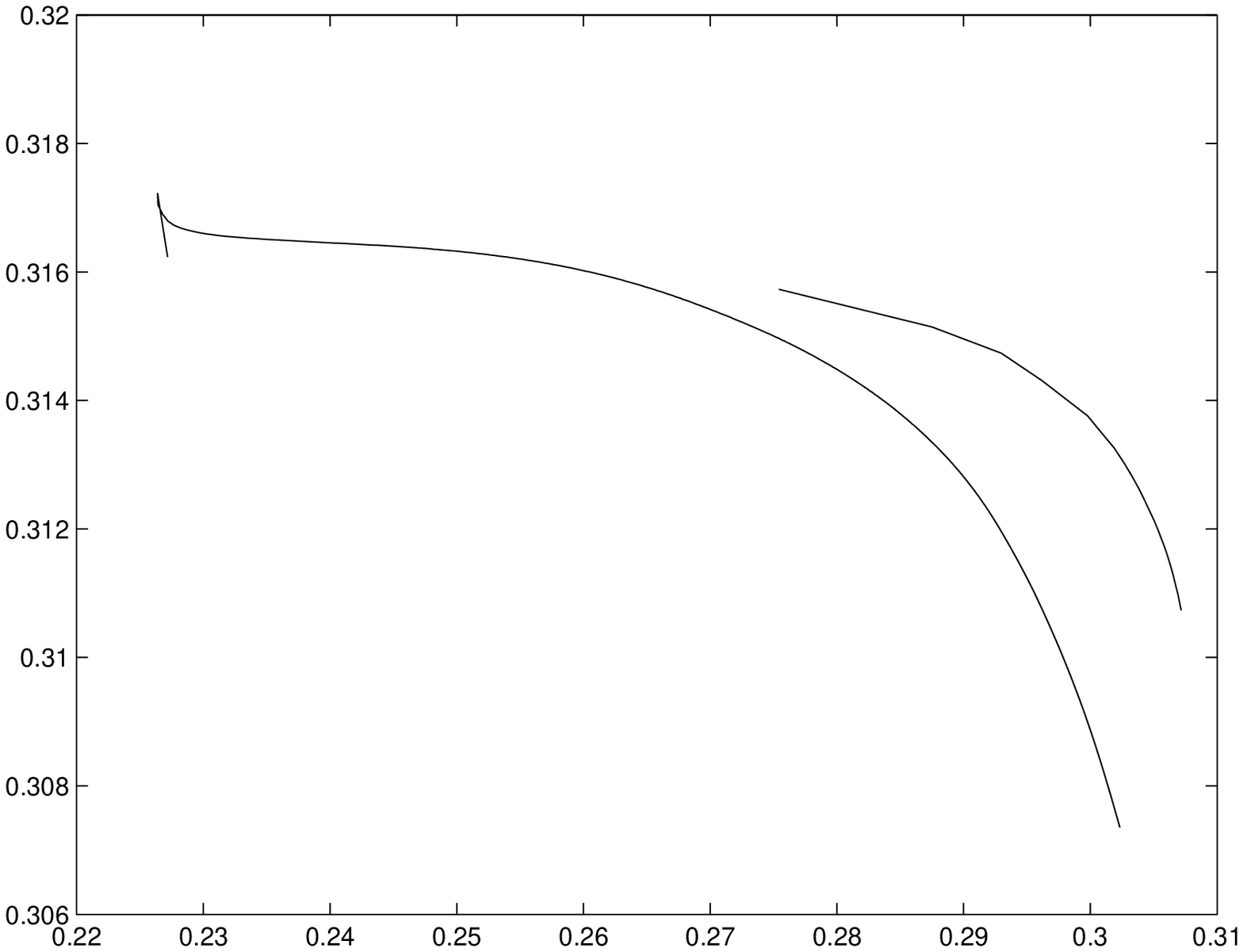, width=2.5in,height=2.5in} %
\epsfig{figure=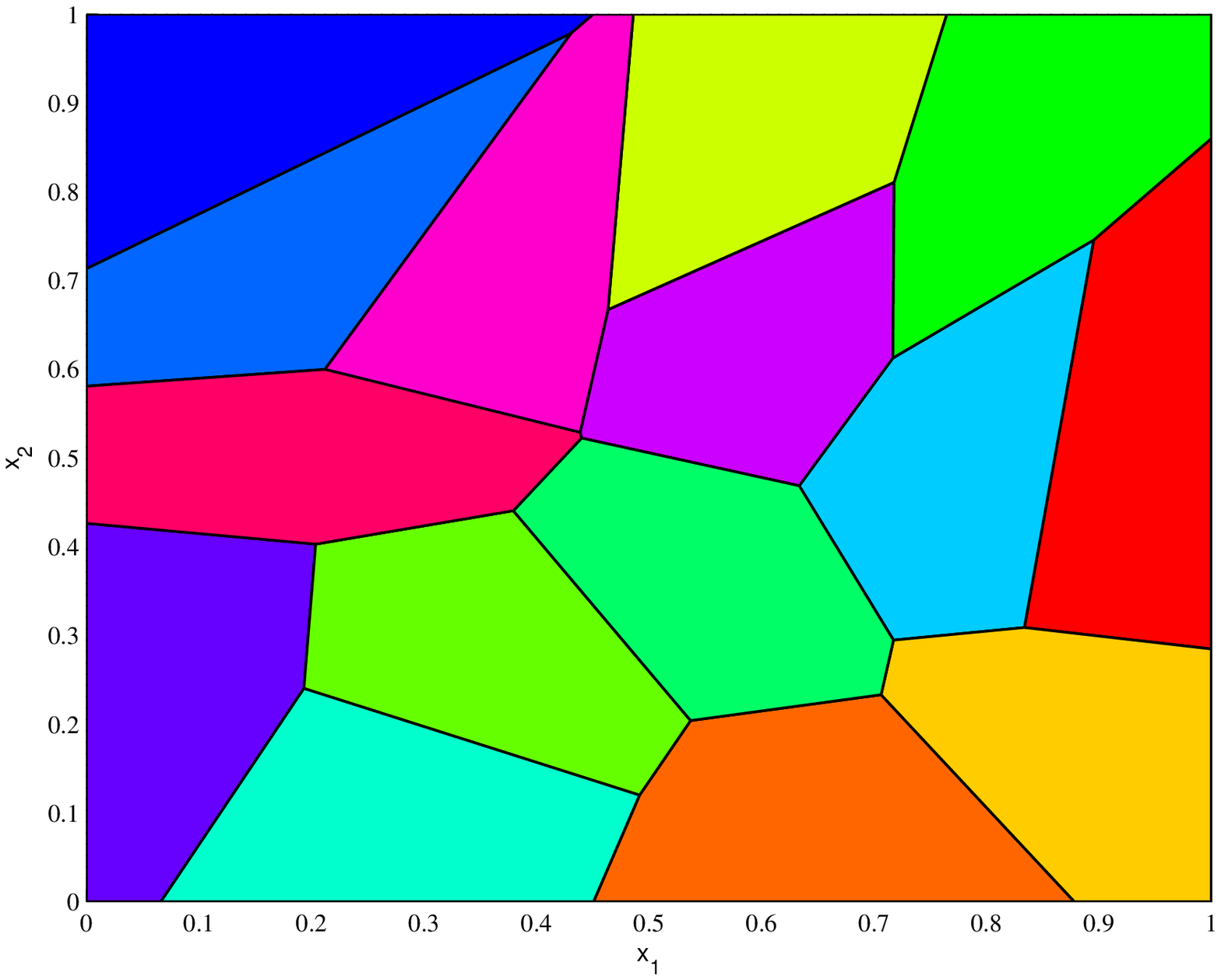,width=2.5in,height=2.5in} %
\epsfig{figure=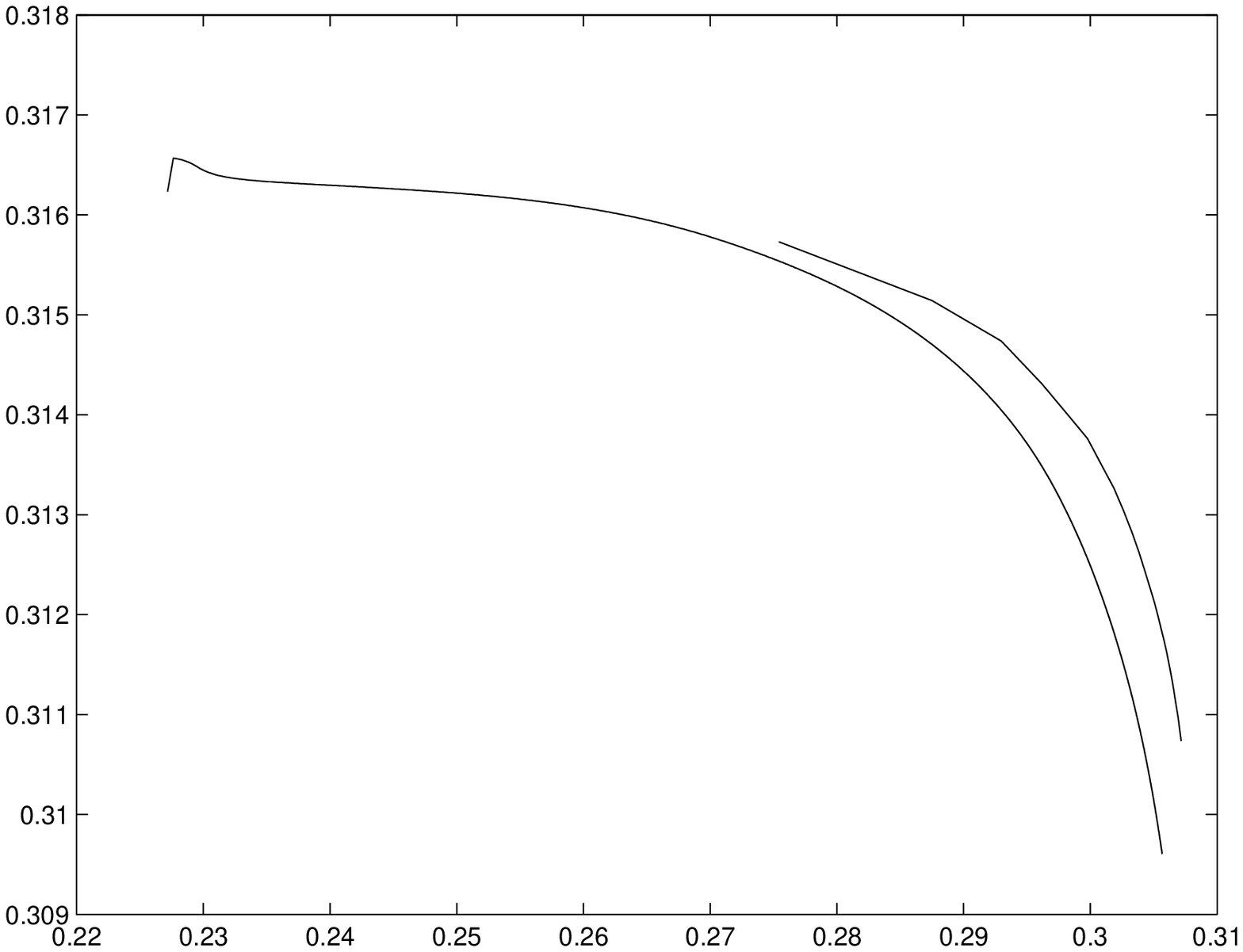, width=2.5in,height=2.5in}
\end{center}
\caption{Fifteen sample points. Top row: exact algorithm (gradient method).
Middle row: continuation algorithm, 3000 steps. Bottom row: continuation
algorithm, 10,000 steps.}
\label{Figure:Sample15}
\end{figure}

\begin{figure}[h]
\begin{center}
\epsfig{figure=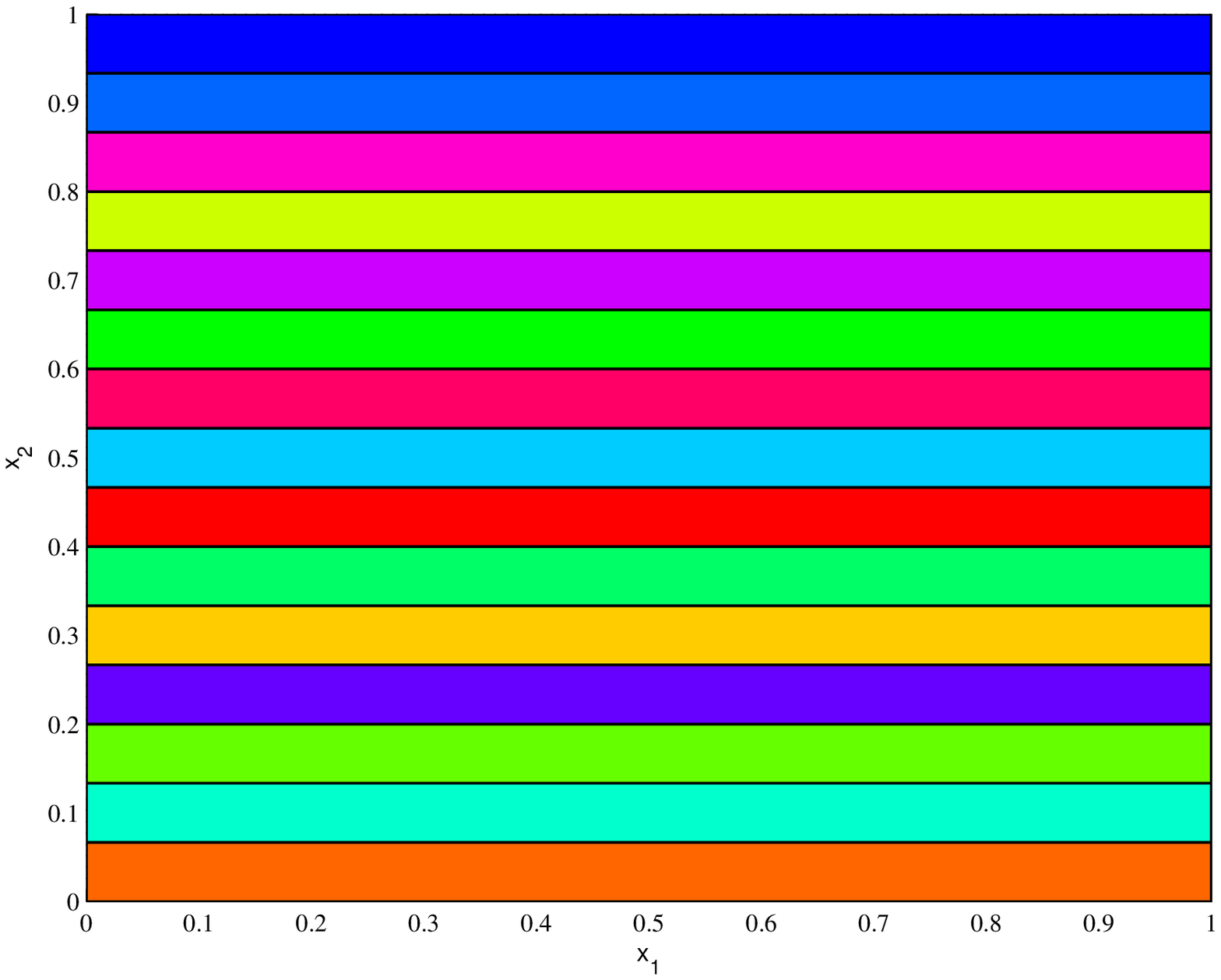, width=1.5in,height=1.5in} %
\epsfig{figure=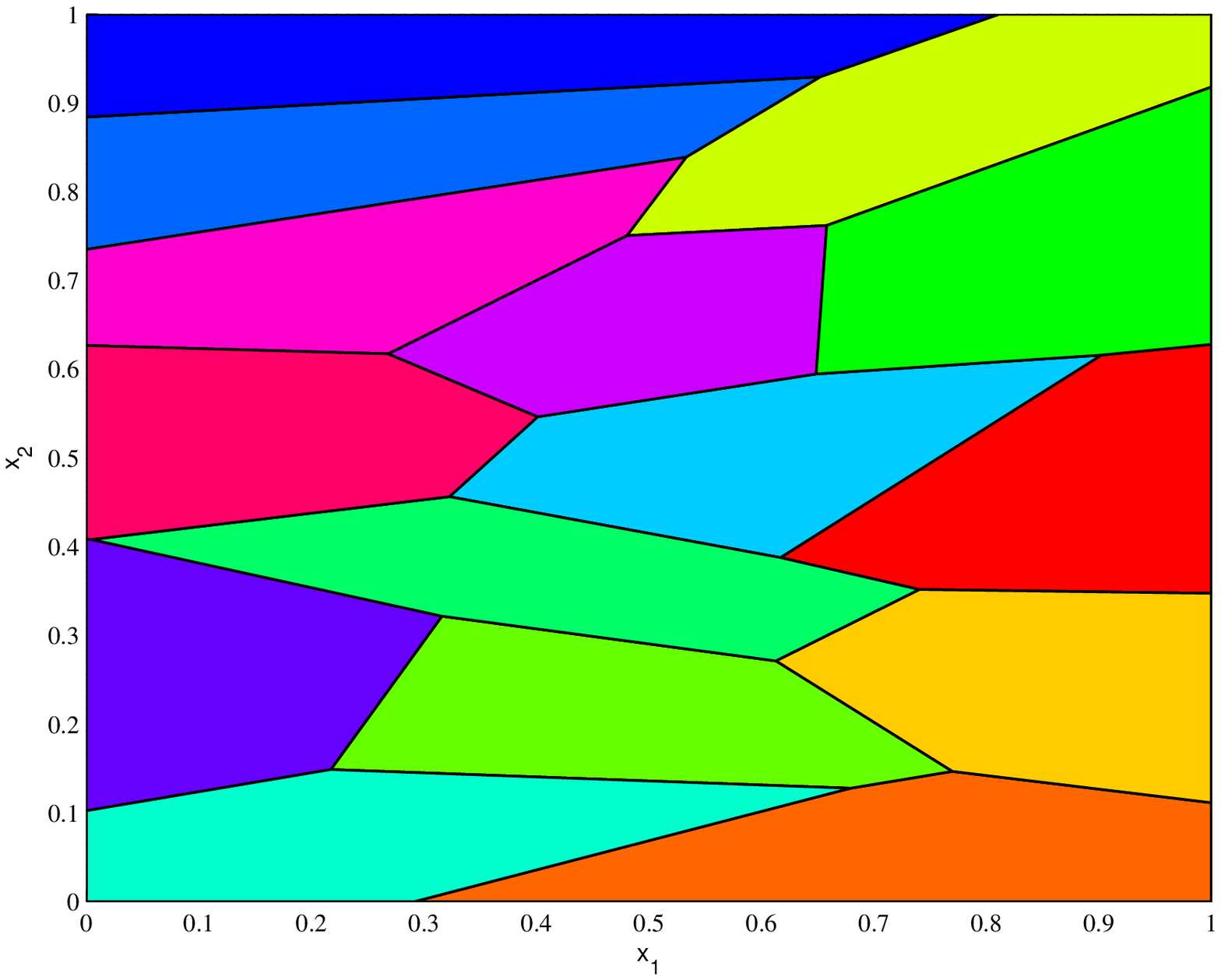, width=1.5in,height=1.5in} %
\epsfig{figure=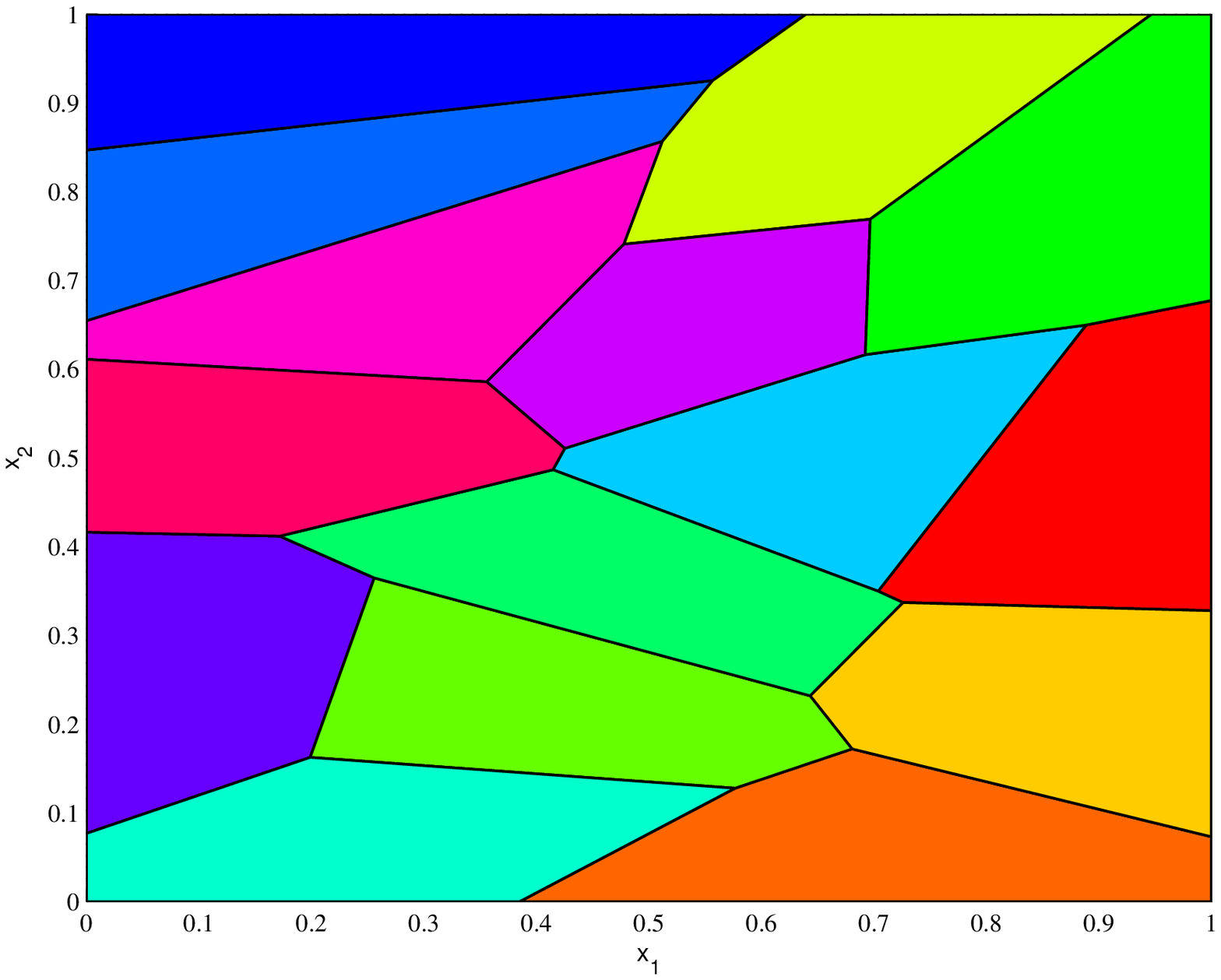,width=1.5in,height=1.5in} %
\epsfig{figure=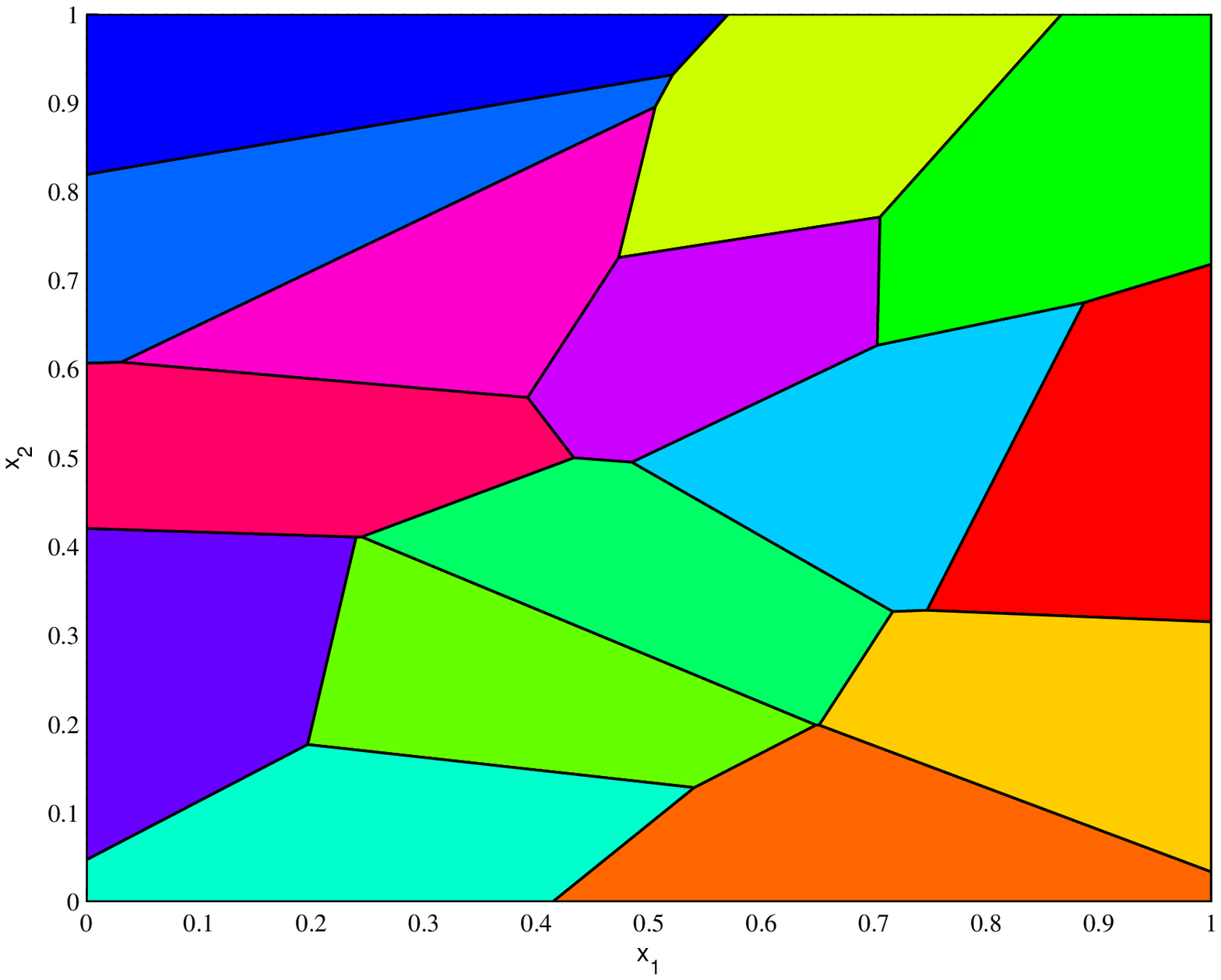,width=1.5in,height=1.5in} %
\epsfig{figure=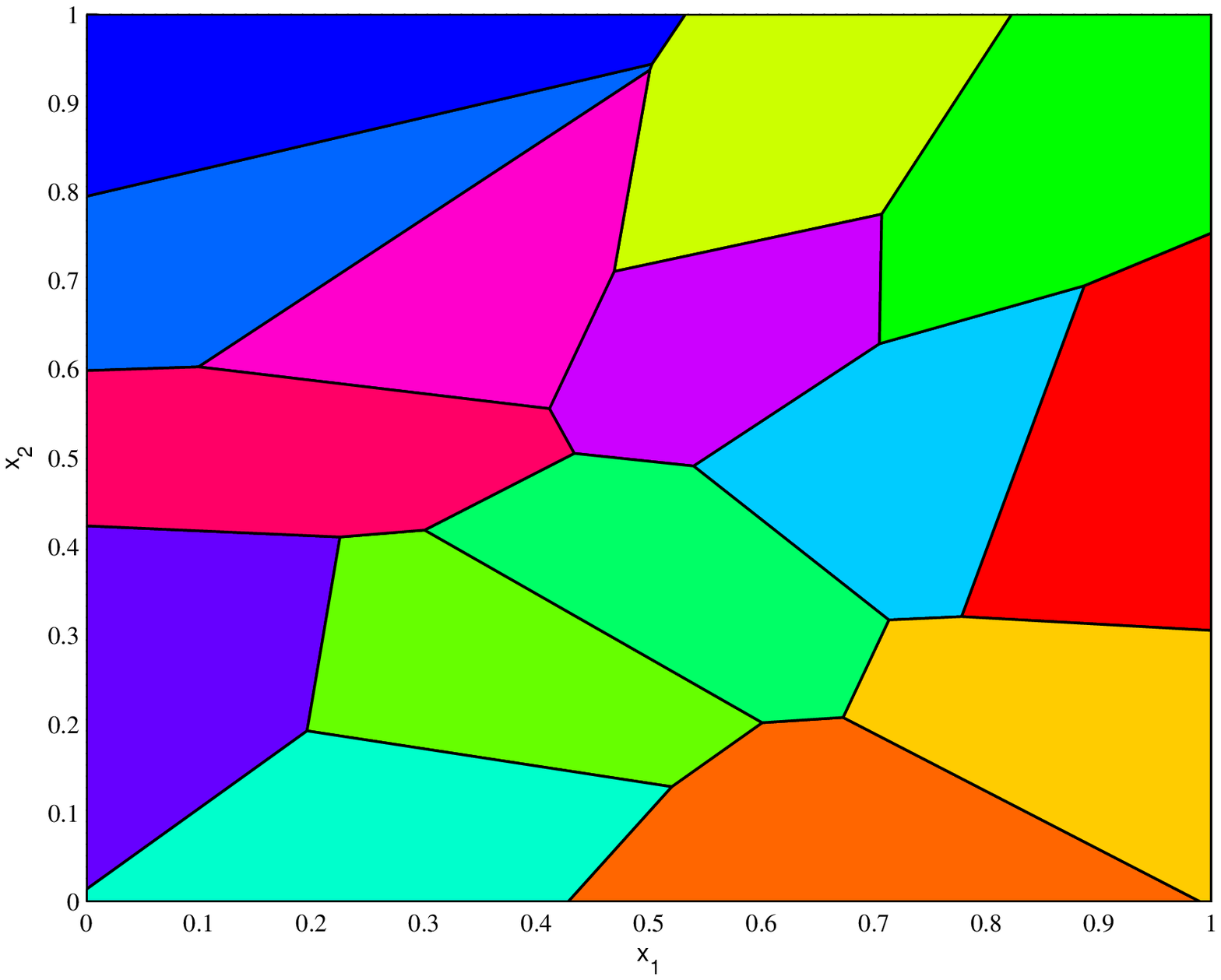,width=1.5in,height=1.5in} %
\epsfig{figure=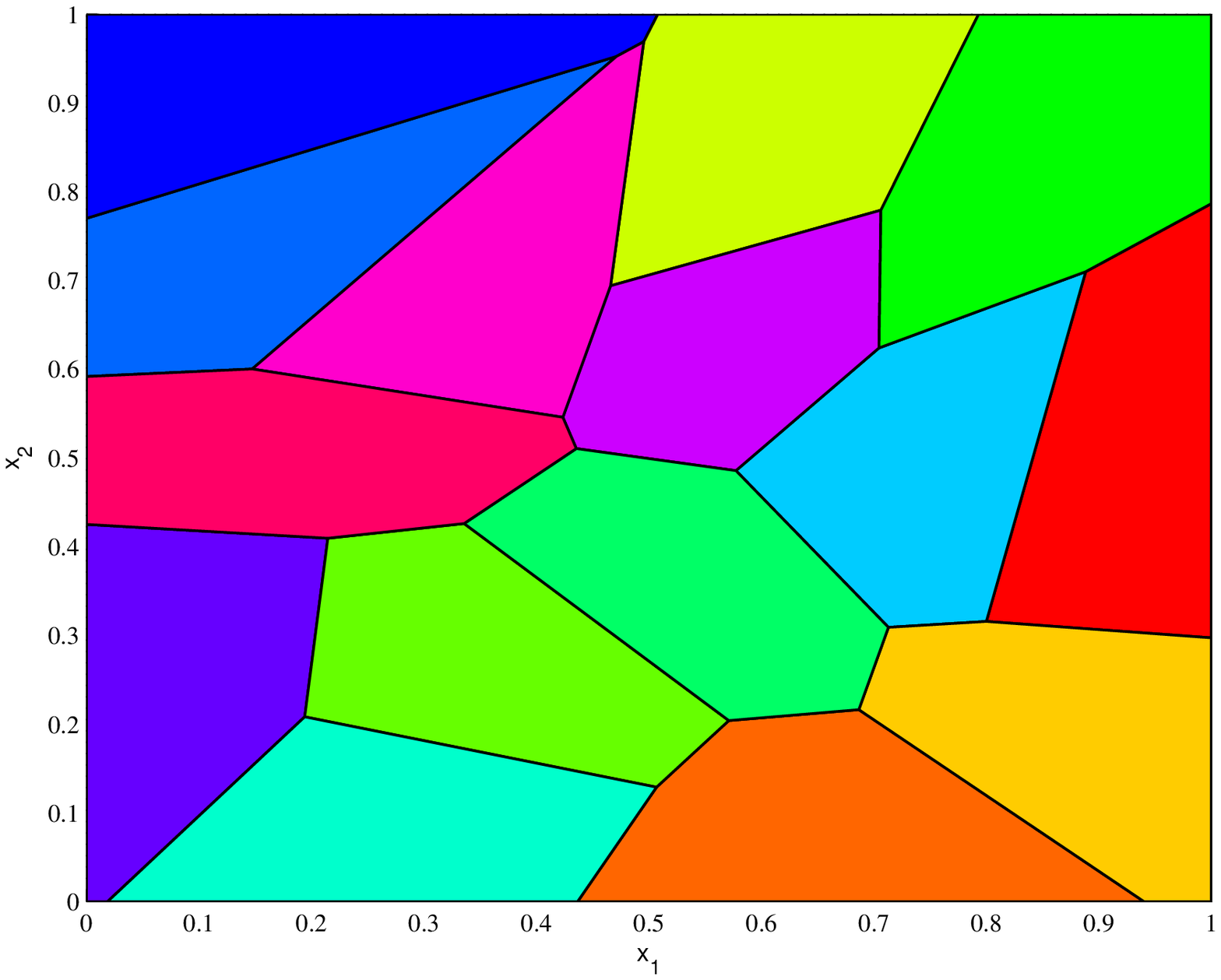,width=1.5in,height=1.5in} %
\epsfig{figure=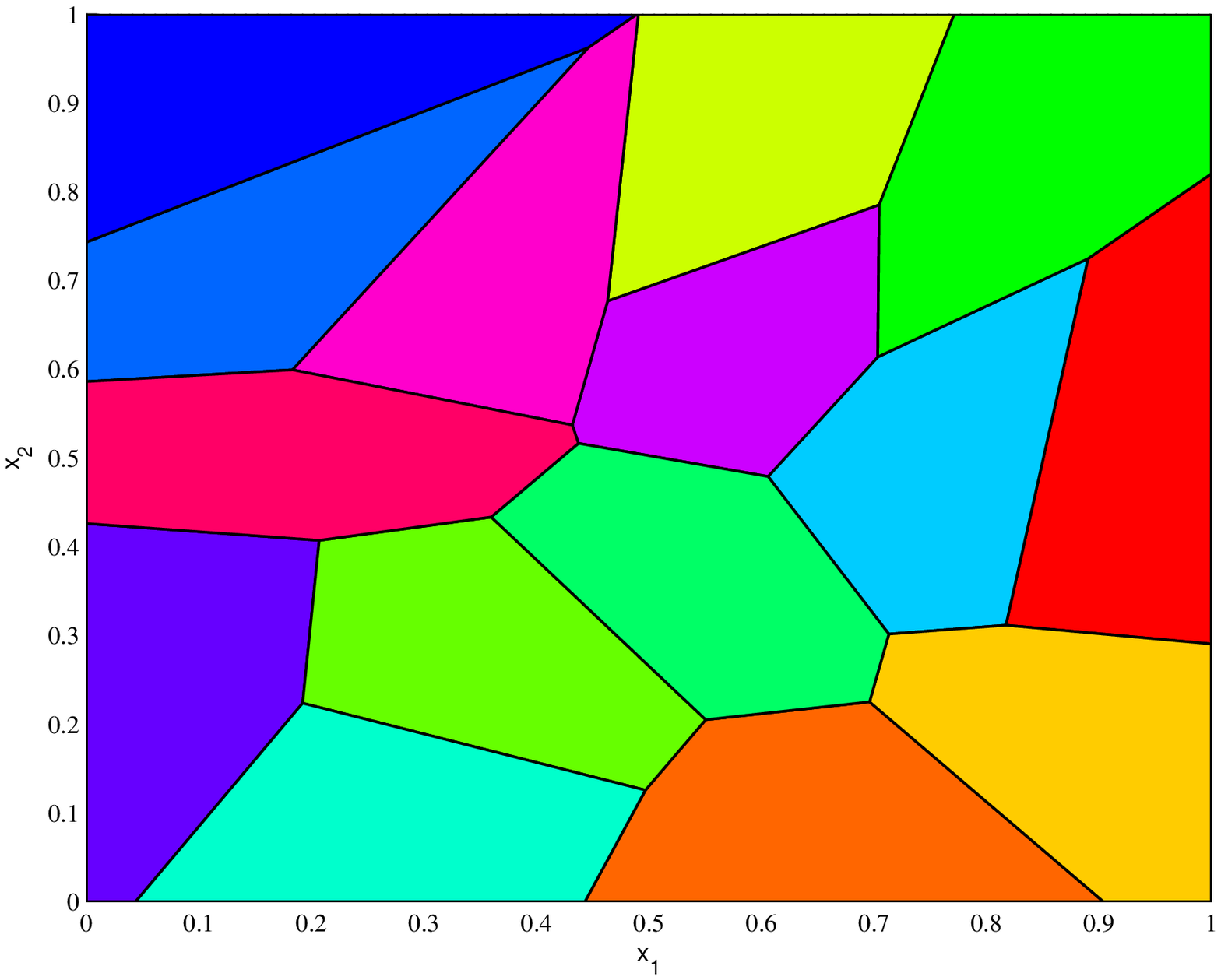,width=1.5in,height=1.5in} %
\epsfig{figure=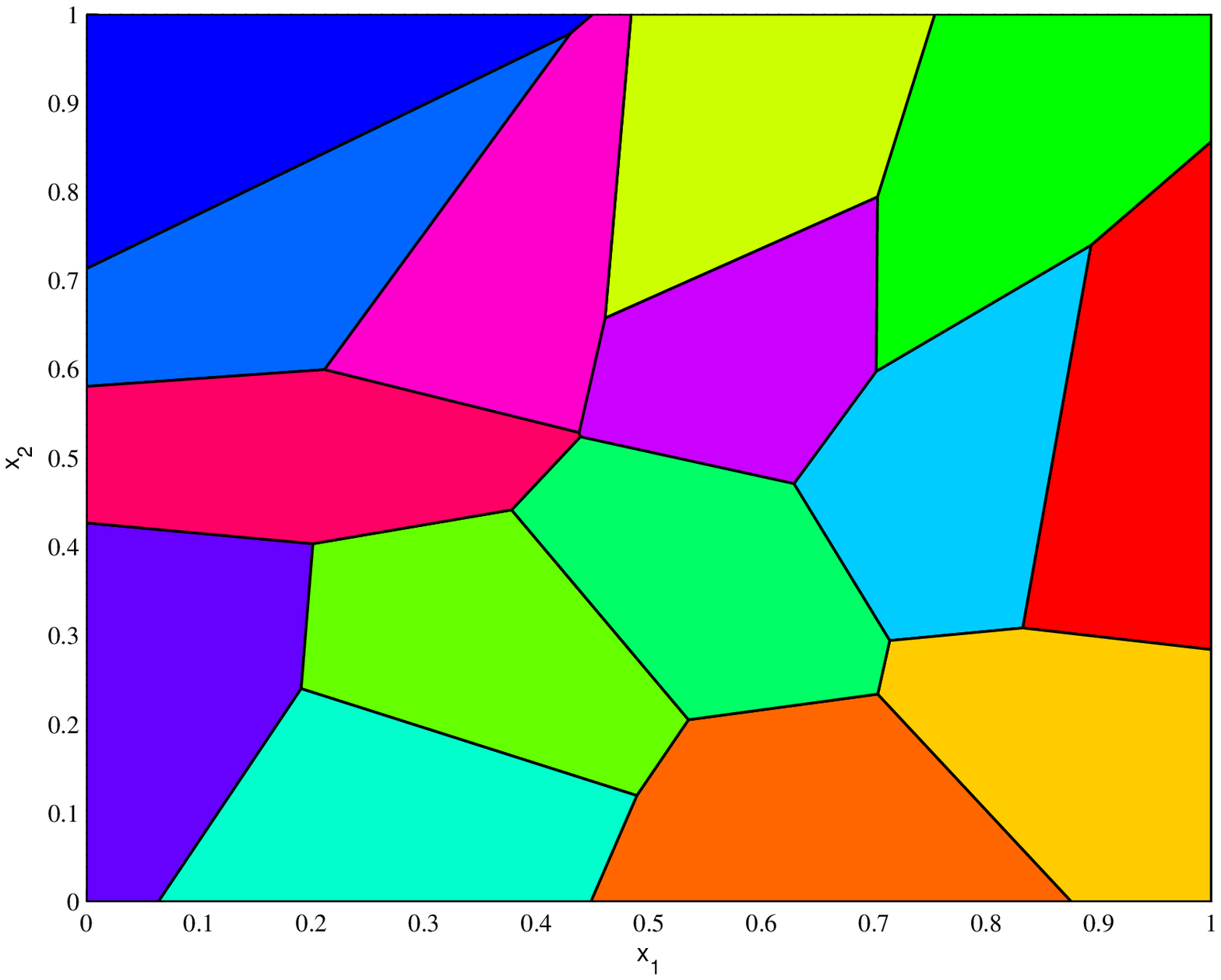,width=1.5in,height=1.5in} %
\epsfig{figure=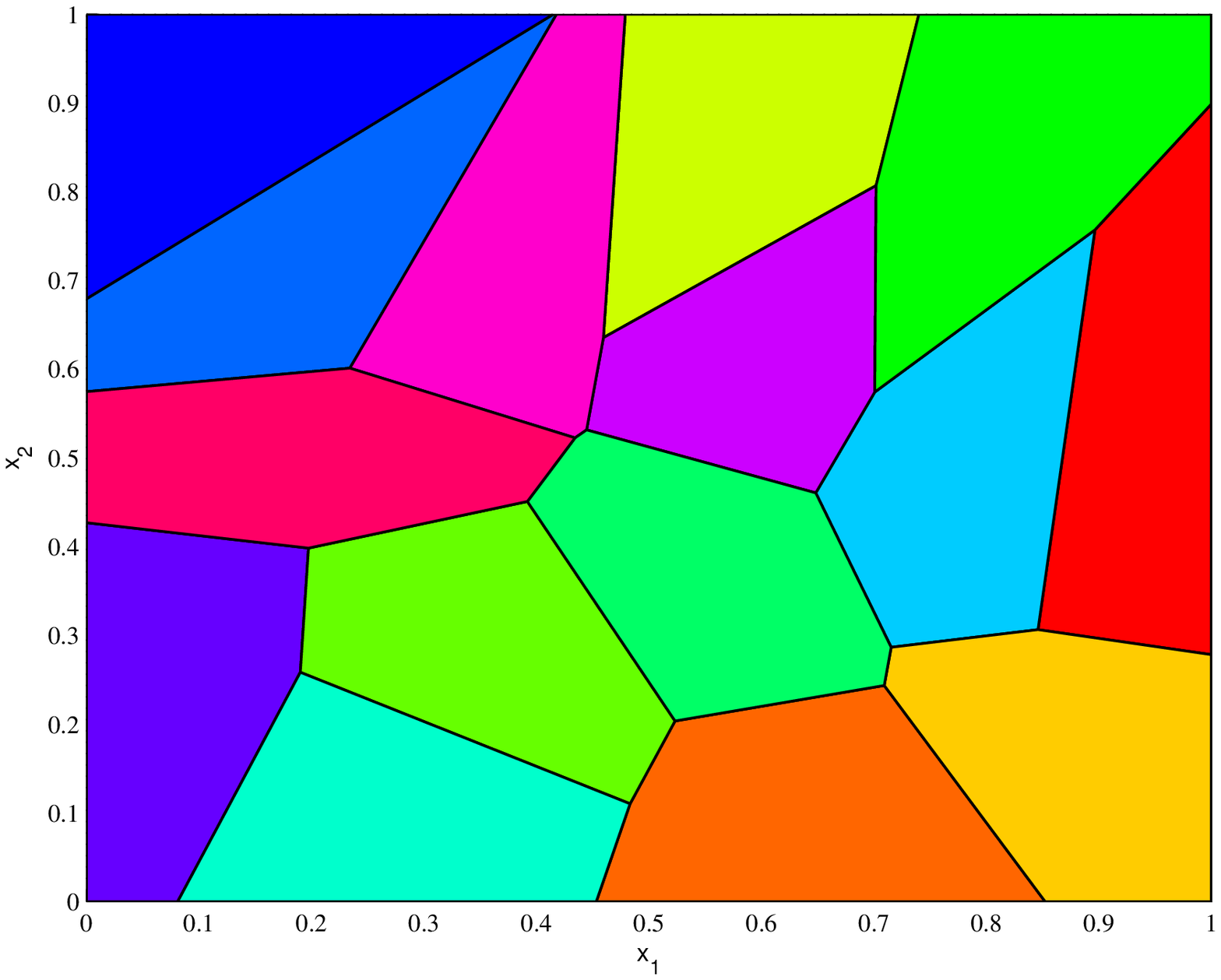,width=1.5in,height=1.5in} %
\epsfig{figure=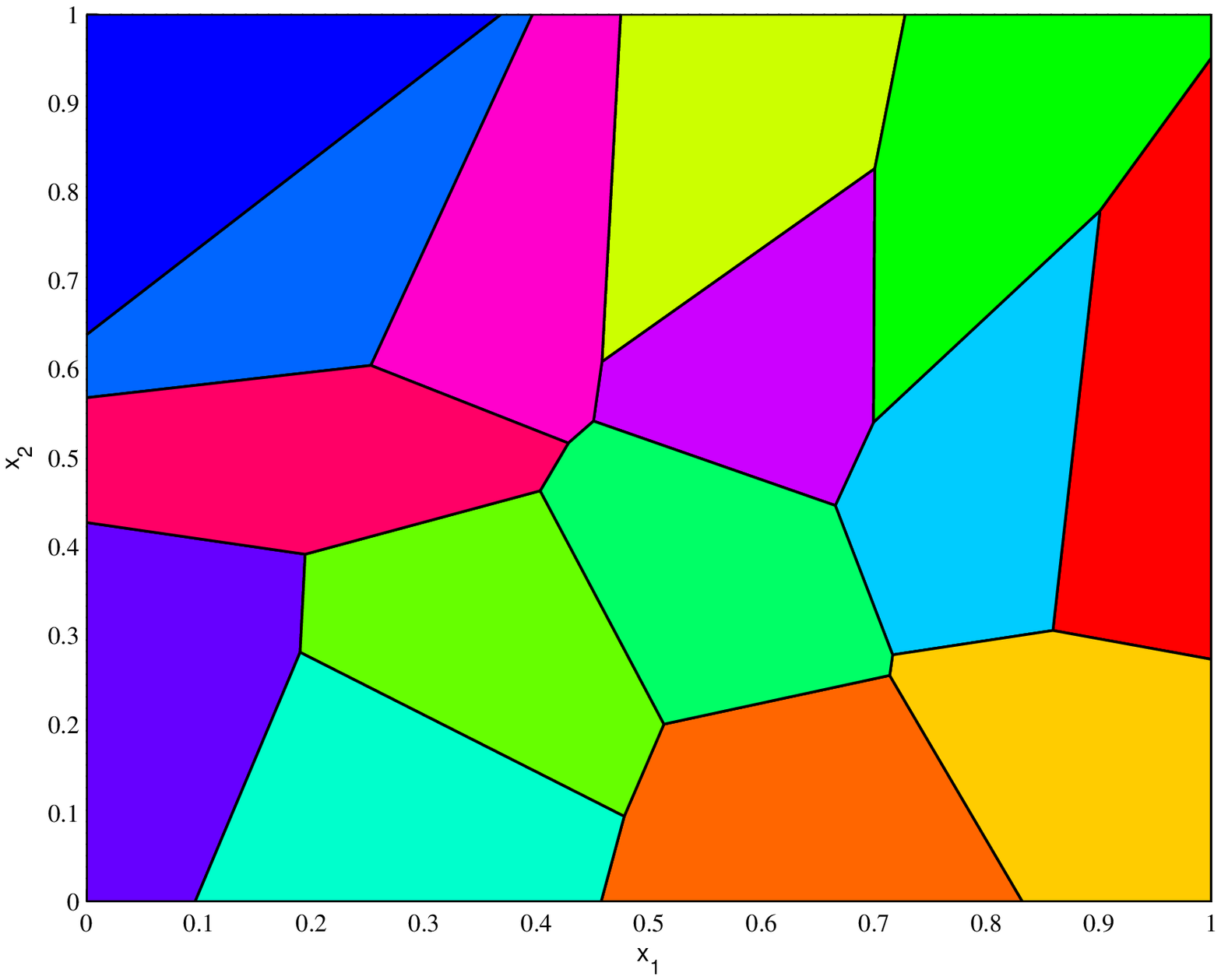,width=1.5in,height=1.5in} %
\epsfig{figure=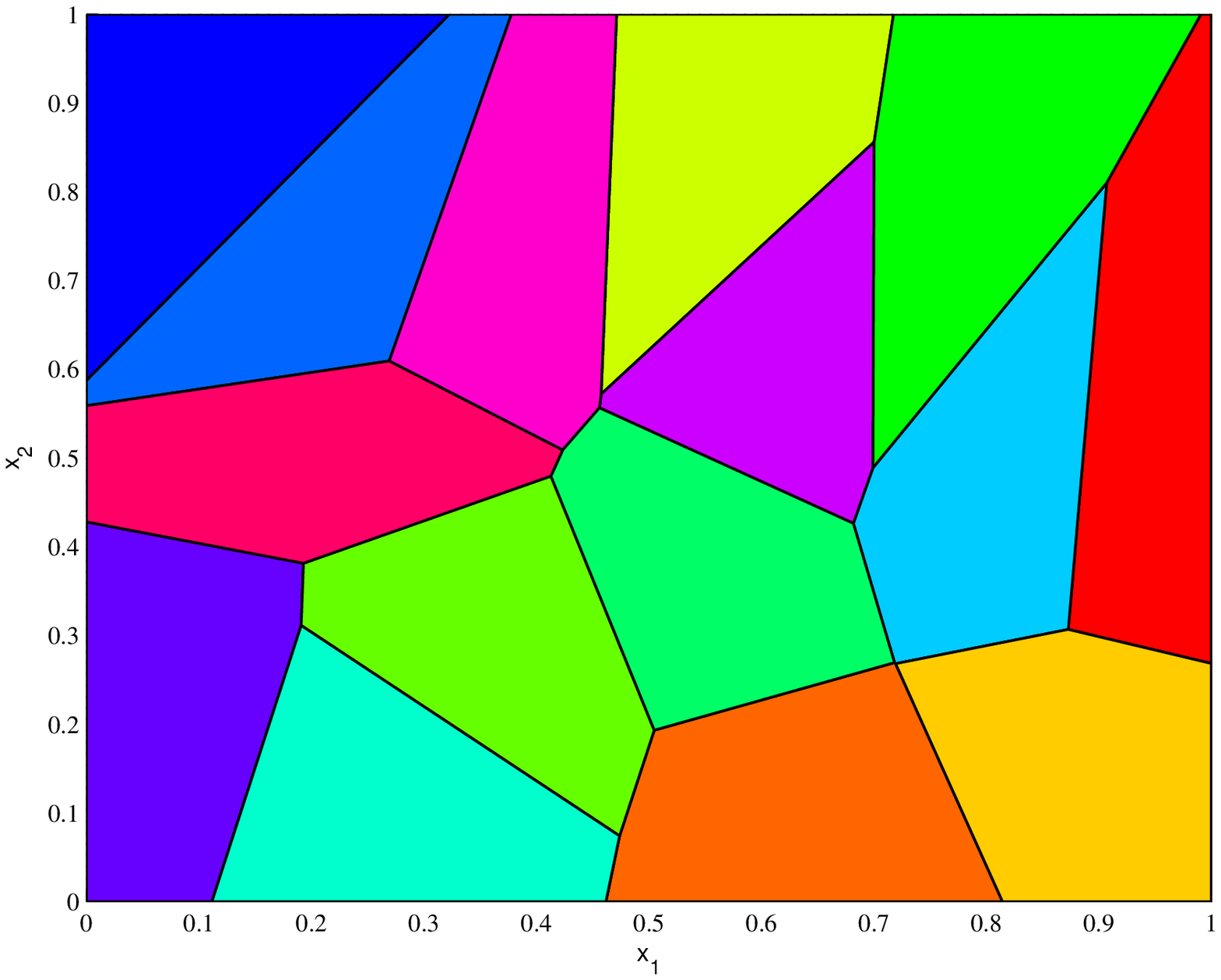,width=1.5in,height=1.5in} %
\epsfig{figure=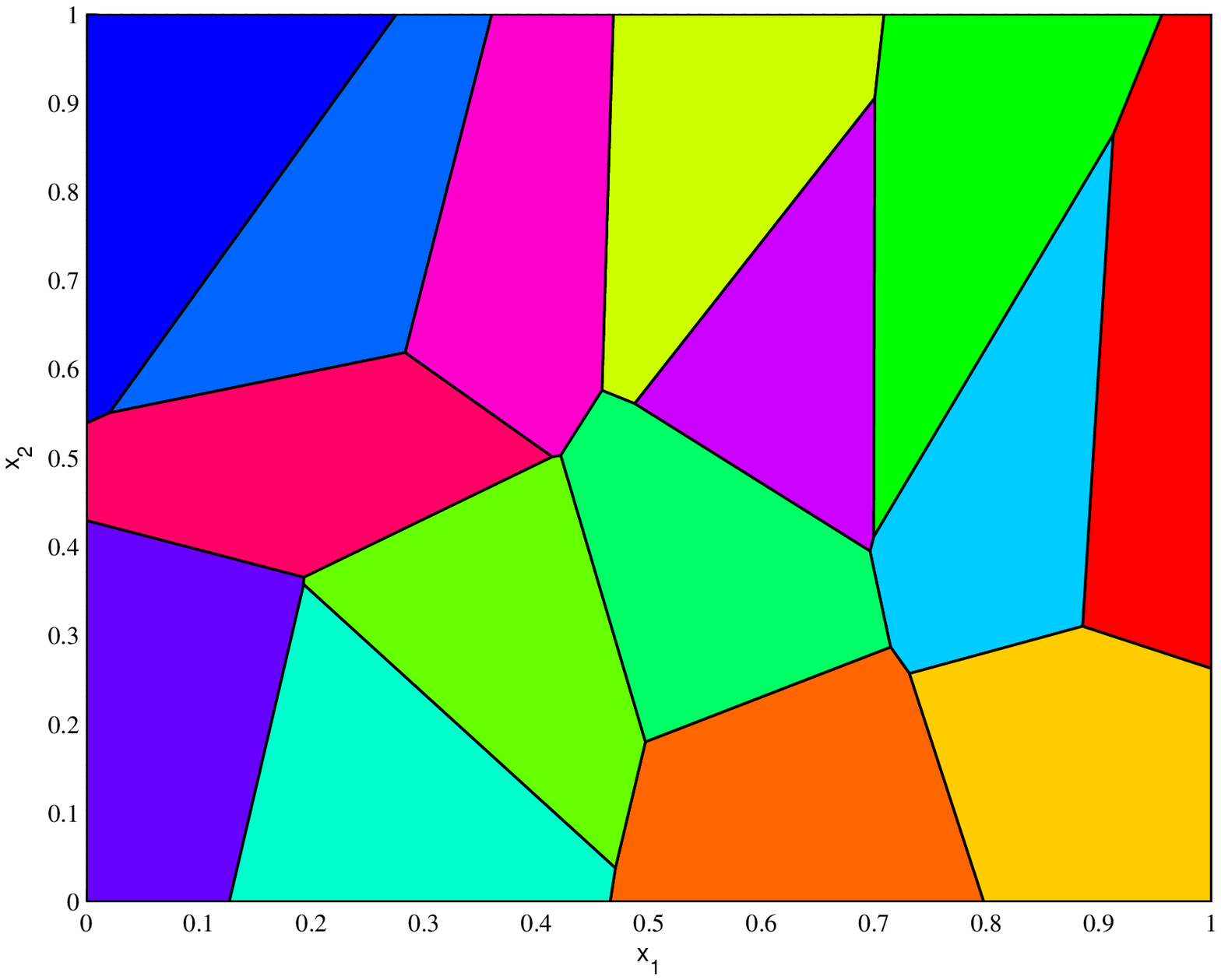,width=1.5in,height=1.5in} %
\epsfig{figure=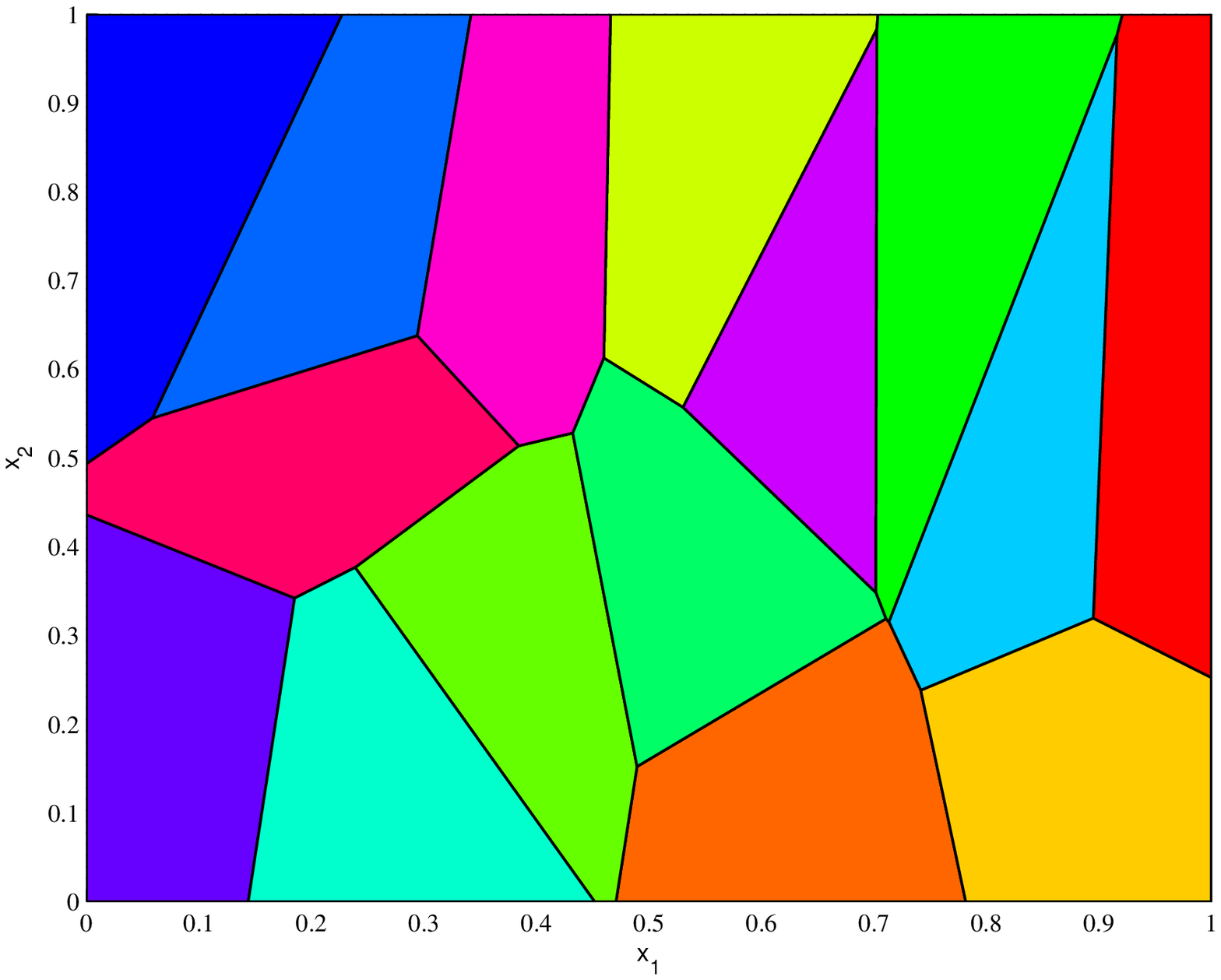,width=1.5in,height=1.5in} %
\epsfig{figure=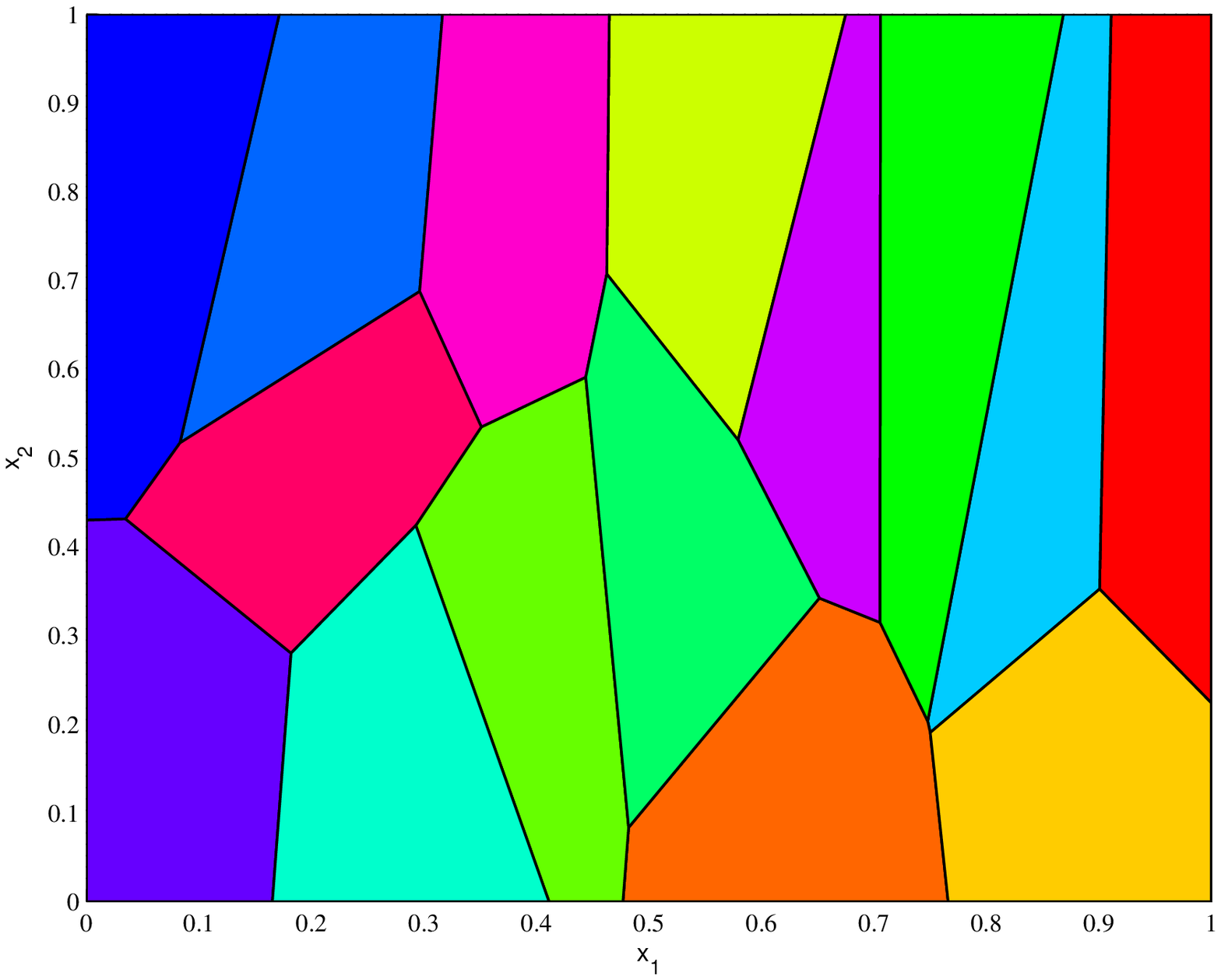,width=1.5in,height=1.5in} %
\epsfig{figure=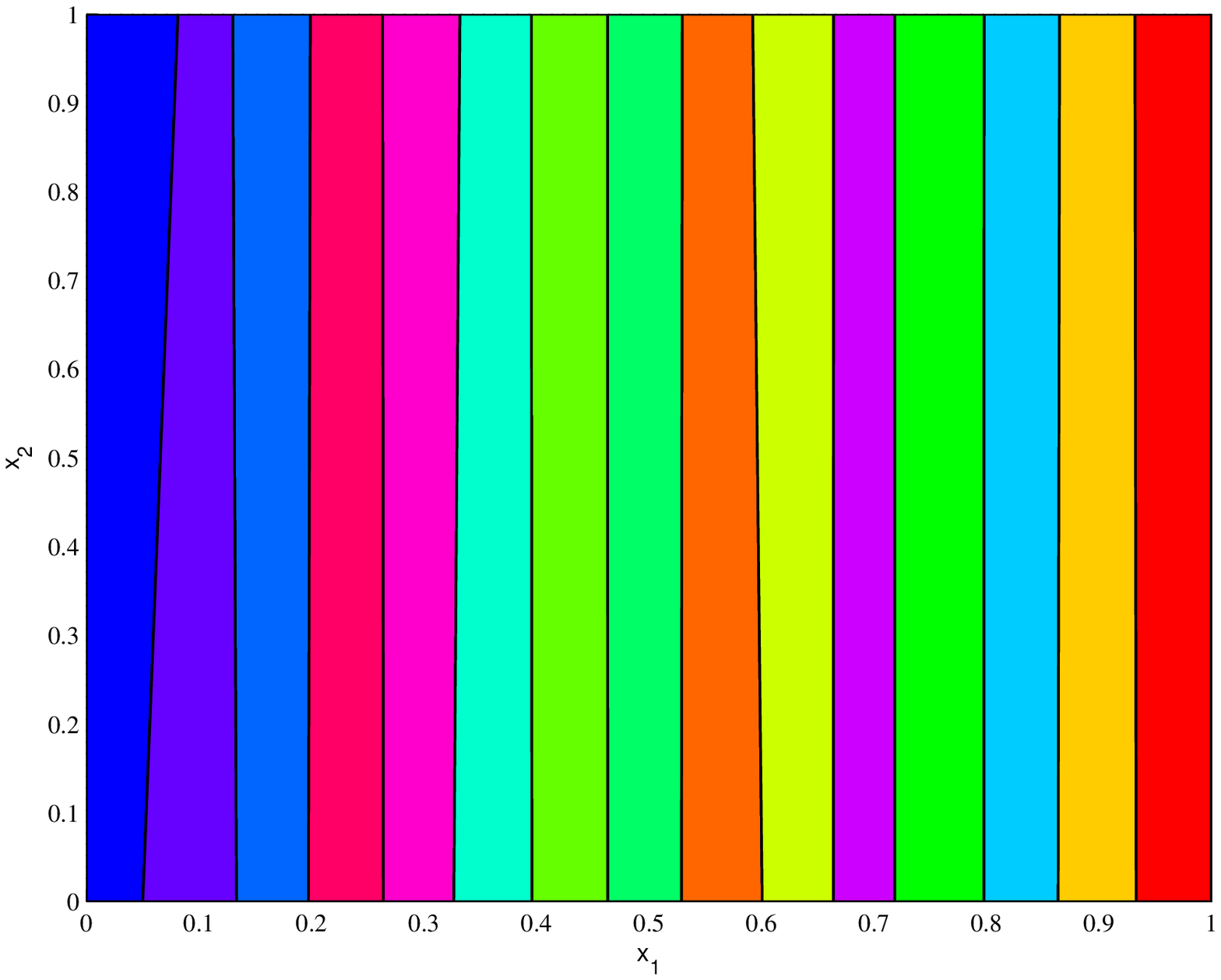,width=1.5in,height=1.5in}
\end{center}
\caption{Fifteen sample points: evolution of the tesselation for $\protect%
\varepsilon=0$ to $\protect\varepsilon=+\infty$ (from top left to bottom
right).}
\label{Figure:Sample15evol}
\end{figure}

\textbf{Acknowledgements} G.C. and F.S. gratefully acknowledge the support
of the Agence Nationale de la Recherche via the research project OTARIE. A.G. gratefully acknowledge the support of Chaire EDF-Calyon
``Finance et Développement Durable,'' of Chaire AXA ``Assurance et risques majeurs,'' and of Chaire Soci\'et\'e G\'en\'erale ``Risques Financiers''. The
authors wish to warmly thank Yann Brenier and Alessio Figalli for stimulating discussions.

\end{document}